\newcommand{\tle}{
Mean Equicontinuity and Related Properties in Hyperspace and Measure Dynamics
}

\documentclass[12pt, reqno]{amsart}
\usepackage{amsmath, amsthm, amscd, amsfonts, amssymb, graphicx, xcolor}
\usepackage{pifont}
\usepackage[bookmarksnumbered, colorlinks, plainpages]{hyperref}
\usepackage{todonotes}
\usepackage{multicol}
\usepackage{enumitem}
\usepackage{orcidlink}

\newtheorem{theorem}{Theorem}[section]
\newtheorem*{theorem*}{Theorem}
\newtheorem{lemma}[theorem]{Lemma}
\newtheorem{proposition}[theorem]{Proposition}
\newtheorem{corollary}[theorem]{Corollary}
\newtheorem*{corollary*}{Corollary}
\theoremstyle{definition}
\newtheorem{definition}[theorem]{Definition}

\newtheorem*{example*}{Example}

\theoremstyle{remark}
\newtheorem{remark}[theorem]{Remark}
\numberwithin{equation}{section}

\newcommand{\hyper}{\mathcal{H}}
\newcommand{\myper}{\mathcal{M}}

\newcommand{\wasser}{d}
\newcommand{\meanPM}{D}
\newcommand{\meanorbPM}{\mathcal{W}}
\newcommand{\diam}{\operatorname{diam}}
\newcommand{\Diam}{\operatorname{Diam}}
\newcommand{\ball}{\operatorname{B}}
\newcommand{\spa}{\operatorname{spa}}
\newcommand{\lip}[1]{\|#1\|_{\operatorname{Lip}}}
\newcommand{\dd}[1]{ \, \mathrm{d}#1}
\newcommand{\supp}{\operatorname{supp}}

\newcommand{\haar}[1]{\left|#1\right|}

\begin{document}
\setcounter{page}{1}

\color{black}{
\noindent 
}


\title[\tle]{\tle}

\author[]{Till Hauser \orcidlink{0000-0002-2580-3673}}
\address[T. Hauser]{Facultad de Matem\'aticas, Pontificia Universidad Cat\'olica de Chile. Edificio Rolando Chuaqui, Campus San Joaquín. Avda. Vicuña Mackenna 4860, Macul, Chile.}
\email{hauser.math@mail.de}
\author[]{Chunlin Liu \orcidlink{0000-0001-6277-013X}}
\address[C. Liu]{School of Mathematical Sciences, Dalian University of Technology, Dalian, 116024, P.R. China
and 
Institute of Mathematics, Polish Academy of Sciences, ul. Śniadeckich 8, 00-656 Warszawa, Poland
}
\email{chunlinliu@mail.ustc.edu.cn}
\thanks{This article was funded by the Deutsche Forschungsgemeinschaft (DFG, German Research Foundation) – 530703788. 
The second author was supported by the
Postdoctoral Fellowship Program and China Postdoctoral Science Foundation under Grant Number BX20250067, and the China Postdoctoral Science Foundation under Grant Number 2025M773074.
}
\begin{abstract}
    For a dynamical system $(X,T)$ we consider the induced dynamical systems $(\myper(X),T)$ and $(\hyper(X),T)$, consisting of Borel probability measures and closed non-empty subsets, respectively. We show that diam-mean equicontinuity of $(X,T)$ is equivalent to the diam-mean equicontinuity of $(\myper(X),T)$. 
    Furthermore, we establish that $(X,T)$ is mean equicontinuous, iff $(\myper(X),T)$ is mean equicontinuous, iff $(\myper(X),T)$ is weakly-mean equicontinuous. 
    For $(\hyper(X),T)$ the situation is different. 
    It is not hard to see that the diam-mean equicontinuity of $(\hyper(X),T)$ implies the diam-mean equicontinuity of $(X,T)$. 
    We provide examples for which $(X,T)$ is diam-mean equicontinuous, while $(\hyper(X),T)$ is not diam-mean equicontinuous.
    We prove that $(\hyper(X),T)$ is diam-mean equicontinuous, iff $(\hyper(X),T)$ is mean equicontinuous, iff $(\hyper(X),T)$ is weakly-mean equicontinuous. 
    We present our results in the context of continuous surjective maps $T\colon X\to X$ and discuss why they also hold for actions of locally compact $\sigma$-compact amenable groups. 
    
\noindent \textit{Keywords.}
Mean equicontinuity, 
Diam-mean equicontinuity, 
Weak-mean \linebreak equicontinuity, 
Hyperspace, 
Measure, 
Induced system,
Denjoy. 

\noindent \textit{2020 Mathematics Subject Classification.} 
Primary 
37B05;  
Secondary  
37B25, 
28A33,  
54B20. 
\end{abstract}

\maketitle

\section{Introduction}
Let $(X,T)$ be a dynamical system, i.e.\ $X$ be a compact metric space and $T\colon X\to X$ a continuous and surjective map. 
In classical statistical mechanics it is common to consider the dynamics imposed on the statistical states, i.e.\ the Borel probability measures $\mu$ on $X$. This point of view allows to 
study the deterministic dynamics on stochastic configurations and to incorporate incomplete knowledge of the starting point.
To apply the theory of topological dynamics it is natural to equip the set $\myper(X)$ of all Borel probability measures on $X$ with the compact and metrizable weak*-topology and to study the continuous push forward mapping $T_*\colon \myper(X)\to \myper(X)$. We simply denote $T:=T_*$ and write $(\myper(X),T)$ for this dynamical system. 
Naturally the question arises which dynamical properties of $(X,T)$ can be studied without a complete knowledge of the starting point, i.e.\ which are satisfied for $(X,T)$ if and only if they hold for $(\myper(X),T)$. 

A strongly related concept is that of the \emph{hyperspace} $\hyper(X)$, consisting of all closed and non-empty subsets of $X$. Equipped with the \emph{Hausdorff metric} (see Section \ref{sec:preliminaries}) $\hyper(X)$ is a compact metric space and the image mapping $A\mapsto T(A)$ is continuous. Again we simply denote $T\colon \hyper(X)\to \hyper(X)$ for the induced map and write $(\hyper(X),T)$ for the respective dynamical system. 
In analogy to $(\myper(X),T)$ is is also natural to ask which properties of $(X,T)$ are inherited by $(\hyper(X),T)$.

In the pioneering work of Bauer and Sigmund it was shown that $(X,T)$ is equicontinuous, 
iff $(\hyper(X),T)$ is equicontinuous, iff $(\mathcal{M}(X),T)$ is equicontinuous \cite[Prop.\ 7]{bauer1975topological}. 
Furthermore, Glasner showed that for minimal homeomorphisms~$T$ the induced system $(\mathcal{M}(X), T)$ is equicontinuous, iff it is distal, iff it is pointwise almost periodic \cite[Theorem 5.1]{Glasner1987}. 
More recently, the analog (for general dynamical systems) was proven by Akin, Auslander and Nagar, i.e.\ it was shown that $(\hyper(X),T)$ is equicontinuous, iff it is distal, iff it is pointwise almost periodic \cite[Theorem~6.9]{akin2017dynamics}. 
Note that this result was independently established by Li, Oprocha, Ye, and Zhang in \cite[Theorem~3.4]{LiOprochaYeZhang2017}.
For the definitions and background on standard notions, such as minimality, equicontinuity, distality and pointwise almost periodicity see \cite{auslander1988minimal, kerr2016ergodic}.

\begin{figure}[h]
\centering
\begin{tabular}{ccccc}
$\mathcal{M}(X)$ eq.   & $\Leftrightarrow$ & $X$ eq.      & $\Leftrightarrow$ & $\mathcal{H}(X)$ eq. \\
$\Updownarrow^*$       &                   & $\Downarrow$ &                   & $\Updownarrow$       \\
$\mathcal{M}(X)$ dist. & $\Rightarrow$     & $X$ dist.    & $\Leftarrow$      & $\mathcal{H}(X)$ dist. \\
$\Updownarrow^*$       &                   & $\Downarrow$ &                   & $\Updownarrow$       \\
$\mathcal{M}(X)$ pwap   & $\Rightarrow$     & $X$ pwap      & $\Leftarrow$      & $\mathcal{H}(X)$ pwap   
\end{tabular}
\caption{All implications between equicontinuity (eq.), distality (dist.) and pointwise almost periodicty (pwap). 
For $\Updownarrow^*$ the nontrivial direction $\Uparrow$ is only known for minimal homeomorphisms. }
\end{figure}

Furthermore, in \cite[Theorem 1 and Proposition 1]{bauer1975topological} it was shown that $(X,T)$ is weakly mixing, iff $(\mathcal{M}(X),T)$ is weakly mixing, iff $(\hyper(X),T)$ is weakly mixing. 
This result was extended by Banks, which proved that $(\hyper(X),T)$ is weakly mixing, iff  $(\hyper(X),T)$ is transitive \cite[Theorem 2]{banks2005chaos}. 
Also the analog holds for $(\myper(X),T)$, for which we provide the short proof in the appendix, where also the definitions of transitivity and weak mixing can be found. 

\begin{figure}[ht]
\centering
\begin{tabular}{ccccc}
$\mathcal{M}(X)$ wm     & $\Leftrightarrow$ & $X$ wm        & $\Leftrightarrow$ & $\mathcal{H}(X)$ wm \\
$\Updownarrow$          &                   & $\Downarrow$  &                   & $\Updownarrow$       \\
$\mathcal{M}(X)$ trans. & $\Rightarrow$     & $X$ trans.    & $\Leftarrow$      & $\mathcal{H}(X)$ trans.  
\end{tabular}
\caption{All Implications between weak mixing (wm) and transitivity (trans.).}
\end{figure}




It is not hard to construct a dynamical system $(X,T)$ with zero topological entropy for which $(\hyper(X),T)$ has positive topological entropy \cite[Page~666]{glasner1995quasi}. 
Note that (with significantly more afford in the construction) this phenomenon can also be observed for minimal dynamical systems $(X,T)$ as presented in \cite[Section~4]{glasner1995quasi}.
Note that the so far mentioned results show a remarkable parallelism between the induced dynamics on $\mathcal{M}(X)$ and $\hyper(X)$. 
It thus came as a surprise when Glasner and Weiss discovered that zero topological entropy of $(X,T)$ is inherited by $(\mathcal{M}(X),T)$ (for homeomorphisms $T$) \cite[Theorem A]{glasner1995quasi}.  
A similar phenomenon can be observed for the notions of nullness and tameness. For the definition and background see  \cite{kerr2007independence, LiLiu2025}. 
In \cite[Theorem~5.10]{kerr2005dynamical} it was shown (for homeomorphisms), that $(X,T)$ is null, iff $(\myper(X),T)$ is null. Furthermore, it follows from \cite[Theorem~5.3]{kohler1995enveloping} that $(X,T)$ is tame, iff $(\myper(X),T)$ is tame.  
To see that the analog does not hold for $(\hyper(X),T)$ note first that from the discussion in \cite{kerr2007independence} it follows that any null system is tame and that any tame system has zero topological entropy. 
It follows from \cite[Theorem~2.13]{glasner2018circularly} that any orientation preserving homeomorphism on the circle $(\mathbb{T},T)$ is null, and hence also tame.
Now consider a \emph{Denjoy} system $(\mathbb{T},T)$, i.e.\ an orientation preserving homeomorphism $T\colon \mathbb{T}\to \mathbb{T}$ that is not minimal and contains no periodic points. 
From the arguments presented in the second half of the proof of \cite[Theorem~5]{lampart2010topological} it follows that $(\hyper(\mathbb{T}),T)$ has infinite topological entropy and is thus neither tame, nor null. 
This shows that both tameness and nullness are not inherited by $(\hyper(X),T)$. 

\begin{figure}[ht]
\centering
\begin{tabular}{ccccc}
$\mathcal{M}(X)$ null     & $\Leftrightarrow$ & $X$ null        & $\Leftarrow^*$ & $\mathcal{H}(X)$ null \\
$\Downarrow$          &                   & $\Downarrow$  &                   & $\Downarrow^?$       \\
$\mathcal{M}(X)$ tame & $\Leftrightarrow$     & $X$ tame    & $\Leftarrow^*$      & $\mathcal{H}(X)$ tame  \\
$\Downarrow$          &                   & $\Downarrow$  &                   & $\Downarrow^?$       \\
$\mathcal{M}(X)$ has 0 entr.\     & $\Leftrightarrow$ & $X$ has 0 entr.\        & $\Leftarrow$    & $\mathcal{H}(X)$ has 0 entr..\
\end{tabular}
\caption{All implications between nullness, tameness and  zero topological entropy (has 0 entr.) for an invertible dynamical system $(X,T)$. 
For $\Downarrow^?$ it remains open, whether the converse holds.
For $\Leftarrow^*$ it seems to be open, whether the converse holds under the additional assumption of minimality of $(X,T)$. }
\end{figure}

In the endeavor of studying systems with discrete spectrum, Fomin  introduced in \cite{fomin1951dynamical} a property nowadays called \emph{mean equicontinuity}. Since then it has been studied intensively and numerous characterizations were found. See  \cite{li2015mean, downarowicz2016isomorphic, li2021meanEquicontinuity, fuhrmann2022structure} 
and the references within for further details on this notion. 
An action $(X,T)$ is called \emph{mean-equicontinuous} if for all $\epsilon>0$ there exists $\delta>0$ such that 
for all $x,x'\in X$ with $d(x,x')<\delta$ we have 
\[\limsup_{n\to \infty} \frac{1}{n}\sum_{k=0}^{n-1}d(T^kx,T^kx')<\epsilon.\]

A strongly related concept is the concept of weak-mean equicontinuity \cite{zheng2020new}.
An action is called \emph{weakly-mean equicontinuous} if for all $\epsilon>0$ there exists $\delta>0$ such that for all $x,x'\in X$ with $d(x,x')<\delta$ we have 
\[\limsup_{n\to \infty} \inf_{\sigma \in S_n}\frac{1}{n}\sum_{k=0}^{n-1}d(T^kx,T^{\sigma(k)}x')<\epsilon,\]
where $S_n$ is the group of all permutations of $\{0,\dots, n-1\}$. 
As presented\footnote{Note that the proof is presented in the setting of group actions but allows for a reformulation to the context of continuous surjections $(X,T)$.} in \cite[Theorem 1.10]{xu2024weak} $(X,T)$ is mean equicontinuous if and only if $X^2$ equipped with the mapping $(x,x')\mapsto(Tx,Tx')$ is weakly-mean equicontinuous. 

A third related concept is the concept of diam-mean equicontinuity. 
An action $(X,T)$ is called \emph{diam-mean equicontinuous} if for all $\epsilon>0$ there exists $\delta>0$ such that for all $x\in X$ we have 
\[\limsup_{n\to \infty} \frac{1}{n}\sum_{k=0}^{n-1}\diam(T^k(\ball_\delta(x)))<\epsilon,\]
where $\diam(A):=\sup_{x,x'\in A}d(x,x')$ denotes the \emph{diameter} of a subset $A\subseteq X$. 

Note that diam-mean equicontinuity implies mean-equicontinuity and that mean equicontinuity implies weak-mean equicontinuity. 
An example of a mean equicontinuous action that is not diam-mean equicontinuous can be found in \cite[Example 5.1]{downarowicz2015odometers} as illustrated in \cite[Page 3]{garcia2021mean}. 
It follows from \cite[Theorem 1.2]{zheng2020new} that any uniquely ergodic $(X,T)$ is weakly mean equicontinuous. 
Furthermore, note that it follows from the characterization of mean equicontinuity given in \cite[Theorem 1.1]{fuhrmann2022structure} and the variational principle that any (invertible) mean equicontinuous dynamical system $(X,T)$ has zero entropy.
Thus, any uniquely ergodic $(X,T)$ with positive entropy serves as an example of a weakly-mean equicontinuous action that is not mean-equicontinuous. 

It is natural to ask whether these notions are inherited by $\mathcal{M}(X)$ and $\hyper(X)$, and which of these properties are equivalent on $\hyper(X)$ and on $\mathcal{M}(X)$. 
A first insight is provided by \cite[Corollary 4.9]{garcia2019when}, where it is shown that $(\hyper(X),T)$ is diam-mean equicontinuous, iff it is mean equicontinuous. 
In Section \ref{sec:inducedDynamicsSubsets} we present an alternative proof for this statement. Our main result is the following complete picture of the behaviour of diam-mean equicontinuity, mean equicontinuity and weak-mean equicontinuity w.r.t.\ the induced dynamics. 

\begin{theorem}
\label{the:intro_Main}
    Let $(X,T)$ be a dynamical system. 
    \begin{enumerate}
    
        \item \label{intro_Main_M(X)_meanEq}
        The following statements are equivalent.
        \begin{itemize}
            \item[(i)] $(X,T)$ is mean equicontinuous.
            \item[(ii)] $(\myper(X),T)$ is mean equicontinuous. 
            \item[(iii)] $(\myper(X),T)$ is weakly-mean equicontinuous. 
        \end{itemize}
        
        \item \label{intro_Main_M(X)_diammeanEq}
        The following statements are equivalent.
        \begin{itemize}
            \item[(i)] $(X,T)$ is diam-mean equicontinuous.
            \item[(ii)] $(\myper(X),T)$ is diam-mean equicontinuous. 
        \end{itemize}
        
        \item \label{intro_Main_H(X)}
        The following statements are equivalent.
        \begin{itemize}
            \item[(i)] $(\hyper(X),T)$ is diam-mean equicontinuous.
            \item[(ii)] $(\hyper(X),T)$ is mean equicontinuous. 
            \item[(iii)] $(\hyper(X),T)$ is weakly-mean equicontinuous. 
        \end{itemize}
    \item 
    \label{intro_Main_H(X)_remark}
        Whenever $(\hyper(X),T)$ is diam-mean equicontinuous, then $(X,T)$ is diam-mean equicontinuous. 
    \end{enumerate}
\end{theorem}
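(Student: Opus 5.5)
We prove the four parts separately. Throughout we metrize $\myper(X)$ by the Wasserstein (Kantorovich--Rubinstein) metric
\[\wasser(\mu,\nu)=\sup\Big\{\Big|\int f\dd{\mu}-\int f\dd{\nu}\Big| : \lip{f}\le 1\Big\},\]
equivalently the optimal transport cost. Since all our notions are invariant under passing to a uniformly equivalent metric, this choice is harmless. The trivial implications are diam-mean $\Rightarrow$ mean $\Rightarrow$ weak-mean equicontinuity, and they hold on any system. For (\ref{intro_Main_H(X)_remark}) we embed $X\to\hyper(X)$ by $x\mapsto\{x\}$. This is an isometric, $T$-equivariant embedding, and the image of $\ball_\delta(x)$ lies in the $\delta$-ball around $\{x\}$. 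Diam-mean equicontinuity passes to closed invariant subsystems, so (\ref{intro_Main_H(X)_remark}) follows. The same embedding $x\mapsto\delta_x$, which is isometric for $\wasser$, gives (ii)$\Rightarrow$(i) in (\ref{intro_Main_M(X)_diammeanEq}). It also gives (iii)$\Rightarrow$(i) in (\ref{intro_Main_M(X)_meanEq}), provided we know that weak-mean equicontinuity of $(\myper(X),T)$ yields mean equicontinuity of $X$. This last step is not merely a restriction argument, and we treat it below.

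For (i)$\Rightarrow$(ii) in (\ref{intro_Main_M(X)_diammeanEq}), fix $\mu$ and $\nu$ with $\wasser(\mu,\nu)<\eta$. Take an optimal coupling $\pi$, and cover $X$ by finitely many sets $U_j$ of diameter less than $\delta$. By Markov's inequality, $\pi$ gives mass at most $\eta/\delta'$ to pairs at distance $\ge\delta'$. For every other pair $(x,y)$ we have $y\in\ball_{\delta'}(x)$. Pushing $\pi$ forward by $T^k\times T^k$ gives
\[\wasser(T^k\mu,T^k\nu)\le \int \diam\big(T^k\ball_{\delta'}(x)\big)\dd{\mu(x)}+\Diam(X)\,\eta/\delta'.\]
Averaging over $k$, we use Fatou's lemma (the $\limsup$ of the integral is at most the integral of the $\limsup$, as the integrands are bounded), together with the uniform bound from diam-mean equicontinuity. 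This controls both $\wasser(T^k\mu,T^k\nu)$ and, after repeating the estimate for any two measures in the $\eta$-ball around $\mu$, the diameter of $T^k$ of that ball. The delicate point is uniformity, since the bound must hold for the $\limsup$ of the average of a supremum over the ball. We handle it with the known equivalent formulation of diam-mean equicontinuity via upper density: for every $\epsilon$ there is $\delta$ with $\overline{D}\{k:\diam T^k\ball_\delta(x)>\epsilon\}<\epsilon$, uniformly in $x$. We then integrate the indicator functions against $\mu$ and apply Fatou's lemma again. For (i)$\Rightarrow$(ii) in (\ref{intro_Main_M(X)_meanEq}) the same coupling estimate works directly, because mean equicontinuity gives, for pairs closer than $\delta'$, a pairwise bound on $\limsup\frac1n\sum d(T^kx,T^ky)$, and Fatou's lemma applied to the coupling integral finishes the argument.

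The main obstacle is (iii)$\Rightarrow$(i) in (\ref{intro_Main_M(X)_meanEq}), because the permutation infimum lets the orbits of $\delta_x$ and $\delta_{x'}$ be matched out of order. The plan is to use the linear structure of $\myper(X)$. Consider $\mu=\tfrac12(\delta_x+\delta_y)$ and $\mu'=\tfrac12(\delta_{x'}+\delta_y)$ with $y$ chosen so that any rearrangement costs the true, in-order distance. Better still, we use the result of \cite[Theorem 1.10]{xu2024weak}: mean equicontinuity of $X$ is equivalent to weak-mean equicontinuity of $X^2$. So it suffices to show that $(X^2,T\times T)$ embeds into $(\myper(X),T)$ in a way that controls distances from below. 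The candidate map is $(x,y)\mapsto\tfrac12(\delta_x+\delta_y)$, but it is not injective, so it does not give such an embedding. Instead we take a product-type argument. For pairs $(x,y)$ and $(x',y')$ with $x$ close to $x'$ and $y$ close to $y'$, we use $\wasser$-closeness of $\tfrac12(\delta_x+\delta_{y})$ and $\tfrac12(\delta_{x'}+\delta_{y'})$, which bounds the matched sum $d(T^kx,T^{\sigma k}x')+d(T^ky,T^{\sigma k}y')$ up to a swap ambiguity. The swap is controlled by a second comparison, with $\tfrac23\delta_x+\tfrac13\delta_y$ and its primed analogue, whose different weights forbid the swap. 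This step is where we expect the real work.

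For (\ref{intro_Main_H(X)}) we need (iii)$\Rightarrow$(i). Here the idea is the lattice structure: the union map $\hyper(X)^2\to\hyper(X)$ is equivariant and $1$-Lipschitz. Suppose the diam-mean condition fails at $A$. Then there are $B_k$ close to $A$ whose images $T^kB_k$ are spread out along a set of $k$ of positive upper density. Taking the union $C=\overline{\bigcup B}$ of finitely many such perturbations (finitely many suffice by compactness of $\hyper(X)$), we obtain a single set $C$ close to $A$ with $T^kC$ far from $T^kA$ for many $k$. Since $T^kA\subseteq T^kC$ and the two sets are of comparable size, no permutation can fix this, so weak-mean equicontinuity fails. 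This contradiction proves (iii)$\Rightarrow$(i).
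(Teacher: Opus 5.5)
\textbf{Part (3), (iii)$\Rightarrow$(i), has a genuine gap.} The witnesses $B_k$ for the failure of diam-mean equicontinuity at $A$ depend on $k$, and $k$ runs through an infinite set. Compactness of $\hyper(X)$ does not let finitely many of them serve a set of $k$ of positive upper density, because the maps $T^k$ are not equicontinuous in $k$.

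More importantly, the Hausdorff distance is not monotone under unions. $d(T^kB_k,T^kA)$ can be large because some $T^ka$ with $a\in A$ is far from $T^kB_k$; call this the \emph{missing} discrepancy. Once you pass to $C\supseteq B_k\cup A$, it can disappear. For a nested pair $A\subseteq C$, only the \emph{excess} $\max_{z\in T^kC}d(z,T^kA)$ survives. (For the excess you would not even need finite unions, since every element of $\ball_\delta(A)$ lies inside the single set $\ball_\delta[A]$.) The missing discrepancy is governed by $\diam(T^k\ball_\delta(a))$ for $a\in A$, i.e.\ by $(X,T)$ itself, and your argument never controls it. You are also trying to prove an unconditional pointwise implication, whereas Proposition~\ref{pro:hyper_pointwiseDMEvsMEvsWME} assumes that $A$ consists of diam-mean equicontinuity points of $X$.

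The paper supplies the missing ingredient in two stages.
\begin{enumerate}
\item For $A\subseteq B$ one has $\meanPM_\mathcal{F}(A,B)\le\spa(\epsilon)\meanorbPM_\mathcal{F}(A,B)+2\epsilon$. This comes from the $1$-Lipschitz, inclusion-monotone functional $B\mapsto\frac{1}{\spa(\epsilon)}\sum_{x\in E}d(x,B)$ with $E$ $\epsilon$-spanning. This is the real mechanism behind your ``no permutation can fix this''; ``comparable size'' is not. Applied to $\{x\}\subseteq\ball_\delta(x)$ together with $\Diam_\mathcal{F}(B)\le 2\meanPM_\mathcal{F}(\{x\},B)$, weak-mean equicontinuity of $\hyper(X)$ at singletons makes every $x\in X$ a diam-mean equicontinuity point.
\item For general $A$, take $\mathcal{B}$ to consist of small diam-mean equicontinuous neighbourhoods $B_x$ of the points of a finite $\delta/2$-net $E\subseteq A$, and use the Vietoris neighbourhood $\langle\ball_\delta[A],\mathcal{B}\rangle$. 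Every $C$ in it meets each $B_x$, which bounds the missing part by $\sum_{x\in E}\Diam_\mathcal{F}(B_x)$. The excess part is handled by applying stage 1 to the nested pair $\bigcup\mathcal{B}\subseteq\ball_\delta[A]$.
\end{enumerate}

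\textbf{Part (1), (iii)$\Rightarrow$(i), is left unfinished.} The detour through $\frac12(\delta_x+\delta_y)$ and a ``swap ambiguity'' is unnecessary. The asymmetric map $\phi(x,y)=\frac23\delta_x+\frac13\delta_y$ alone is injective: the atom of mass at least $\frac23$ is $x$, and $y$ is then determined. It is also continuous and equivariant, hence a conjugacy of $(X^2,T)$ onto a subsystem of $(\myper(X),T)$. Comparing the two extreme couplings even gives $d(\phi(x,y),\phi(x',y'))\ge\frac16\big(d(x,x')+d(y,y')\big)$, which is exactly the lower control you wanted. So weak-mean equicontinuity of $\myper(X)$ restricts to $X^2$, and the product criterion you quote finishes the proof. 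This is the paper's argument, with weights $\frac13,\frac23$.

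The remaining parts are fine. (1)(i)$\Rightarrow$(ii) is the paper's coupling-plus-Fatou argument. In (2), your estimate already gives, for each $k$ and uniformly over the ball (by the triangle inequality through $T^k\mu$),
\[\diam(T^k\ball_\eta(\mu))\le 2\int\diam(T^k\ball_{\delta'}(x))\dd\mu(x)+2\diam(X)\eta/\delta'.\]
So the upper-density reformulation is not needed. This is a slightly simpler variant of the paper's disintegration-based coupling of $\nu,\nu'$, at the cost of a factor $2$.
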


We provide the proof of (\ref{intro_Main_M(X)_meanEq}-\ref{intro_Main_M(X)_diammeanEq}) and (\ref{intro_Main_H(X)}) in the Sections \ref{sec:inducedDynamicsMeasures} and \ref{sec:inducedDynamicsSubsets}, respectively. 
Note that $(X,T)$ can be identified with a subsystem of $(\hyper(X),T)$ via $x\mapsto\{x\}$, which allows to observe  (\ref{intro_Main_H(X)_remark}). 
We next discuss that the converse in (\ref{intro_Main_H(X)_remark}) does not hold.
For this recall from our discussion above that for a Denjoy system $(\mathbb{T},T)$ we have that $(\hyper(\mathbb{T}),T)$ has infinite topological entropy and hence that $(\hyper(\mathbb{T}),T)$ is not mean equicontinuous, i.e.\ not diam-mean equicontinuous. 
Nevertheless, we have the following. 

\begin{proposition}
    Any Denjoy system $(\mathbb{T},T)$ is diam-mean equicontinuous. 
\end{proposition}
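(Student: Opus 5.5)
The plan is to use the Poincaré semiconjugacy and the wandering intervals of a Denjoy system. Since $T$ has irrational rotation number $\alpha$, there is a continuous, monotone, degree-one surjection $h\colon\mathbb{T}\to\mathbb{T}$ with $h\circ T=R_\alpha\circ h$, where $R_\alpha$ is the rotation. Because $T$ is not minimal, the fibres $h^{-1}(y)$ that are not singletons are closed arcs $J$. There are only countably many of them, and they are permuted by $T$. Their interiors are pairwise disjoint and wandering, so $\sum_J |J|\le 1$ and $|T^kJ|\to 0$ along each orbit. Here $|\cdot|$ denotes arc length. We would work with the arc-length metric on $\mathbb{T}$. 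The key observation is that $T$ maps arcs to arcs, so $\diam(T^k(\ball_\delta(x)))\le |T^k(\ball_\delta(x))|$, and this length is controlled by the lengths of the finitely many gap arcs covered.

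Fix $\epsilon>0$. First choose finitely many gap arcs $J_1,\dots,J_m$, taken from distinct orbits or as whole orbit segments, with total residual length $\sum_{J\notin\{\text{orbits of }J_i\}}|J|$ small. Equivalently, consider the closure $K$ of the union of all gap arcs and the Cantor minimal set $\Omega$. Next choose $\eta>0$ with the following property. If an arc $I$ satisfies $|h(I)|<\eta$, then the length of $I$ decomposes as its intersection with gaps plus a part of small length. The part outside the gaps has length comparable to $|h(I)|$ if we choose $h$ to map the Lebesgue measure of the complement suitably. A cleaner way to arrange this is to replace the metric: define $d'(x,y)=|h(x)-h(y)|+\sum_J |[x,y]\cap J|$ over the shorter arc. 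This is a compatible metric, and diam-mean equicontinuity is independent of the compatible metric by uniform equivalence on compact spaces. Under $T$, the first summand is preserved because the rotation is an isometry. The second summand is the total gap length inside the arc. Gaps are mapped to gaps, so $T^k$ of an arc $I$ has gap content $\sum_{J\subseteq I}|T^kJ|$, plus partial gaps at the two ends.

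Now estimate the mean. Take $\ball_\delta(x)$ with $\delta$ small, so that $h(\ball_\delta(x))$ has length $<\delta$. Then
\[\diam(T^k\ball_\delta(x))\le \delta+\sum_{J\cap \ball_\delta(x)\neq\emptyset}|T^kJ|.\]
The main obstacle is that $\ball_\delta(x)$ may meet infinitely many gaps, and uniformity in $x$ is needed. We split the gaps into a finite family $\mathcal{F}=\{T^jJ_i: |j|\le N\}$ and the remainder, whose total length over all iterates we control. Since the gaps in one orbit are disjoint, for a single gap $J$ we have $\frac1n\sum_{k<n}|T^kJ|\le \frac1n\sum_{k}|T^kJ|\le\frac1n$, which tends to $0$. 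For a fixed finite family the contribution therefore vanishes in the mean regardless of $x$, because each term is summable in $k$. The tail is the hard part. The gaps meeting $\ball_\delta(x)$ lie in the $\delta$-neighbourhood of the ball. Their $T^k$-images lie in $T^k$ of a slightly larger arc. Hence their total length is at most the gap content of an arc whose $h$-image has length $<3\delta$.

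So define $g(\rho)=\sup\{\text{gap content of an arc } I: |h(I)|\le\rho\}$. Here we must also count the partial end gaps, which are bounded by the largest gap content. We need to show that $g(\rho)\to0$ as $\rho\to0$, which is the heart of the argument. This holds because, for large $N$, the gaps outside $\mathcal{F}$ have total length $<\epsilon$. Each gap in $\mathcal{F}$ corresponds to a distinct point $h(J)$. For $\rho$ smaller than the minimal distance between these finitely many points, an arc with $|h(I)|\le\rho$ contains at most one gap from $\mathcal{F}$. Thus the gap content of $T^k\ball_\delta(x)$ is at most $\epsilon$ plus the length of a single gap from $\mathcal{F}$ along the orbit. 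The second contribution averages to at most $\max_i \frac1n\sum_k|T^kJ_i|$, which tends to $0$ uniformly in $x$. Combining these bounds gives $\limsup_n \frac1n\sum_{k<n}\diam(T^k\ball_\delta(x))\le 3\delta+2\epsilon$ for all $x$, which proves that $(\mathbb{T},T)$ is diam-mean equicontinuous.
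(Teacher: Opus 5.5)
There is a genuine gap, and it sits in the final averaging step. Write $\mu$ for the unique $T$-invariant measure, $G$ for the union of the open gaps and $\lambda$ for Lebesgue measure. Your reduction is sound up to the estimate $\diam(T^k\ball_\delta(x))\le \mu(\ball_\delta(x))+\lambda\bigl(T^k\ball_\delta(x)\cap G\bigr)$. This needs $d'$ to be defined carefully, e.g.\ as the $m$-length of the $m$-shorter arc for $m=\mu+\lambda|_G$; combining $|h(x)-h(y)|$ with the arc-length-shorter arc can break the triangle inequality.

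First, $g(\rho)\to 0$ is false. A single gap $J$ is an arc with $|h(J)|=0$ and gap content $|J|$, so $g(\rho)\ge\max_J|J|$ for every $\rho>0$. (Were it true, $T$ would be equicontinuous, hence minimal.) Your fallback, that $T^k\ball_\delta(x)$ contains at most one gap of your finite family $\mathcal{F}$, is correct. But the claim that this term averages to at most $\max_i\frac1n\sum_k|T^kJ_i|\to0$ is not. That gap is $T^kJ$ for a gap $J$ of the ball which changes with $k$. Since $\{h(J_i)+j\alpha\}_{j\in\mathbb{Z}}$ is dense, the ball contains infinitely many gaps from each orbit, and they are carried onto members of $\mathcal{F}$ at different times.

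Concretely, take $\epsilon<|J^*|$ for a longest gap $J^*$; then $J^*\in\mathcal{F}$. For $x\in\Omega$ we have $J^*\subseteq T^k\ball_\delta(x)$ whenever $h(J^*)-k\alpha\in\operatorname{int}h(\ball_\delta(x))$. That set of $k$ has density $\lambda(h(\ball_\delta(x)))>0$. So this term has mean at least $|J^*|\,\lambda(h(\ball_\delta(x)))$ and does not vanish. Your observation that a fixed finite family contributes nothing in the mean applies to gaps of the ball that lie in $\mathcal{F}$, not to gaps whose $k$-th image lies in $\mathcal{F}$. Hence the bound $3\delta+2\epsilon$ is not established.

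The missing idea is a frequency bound from unique ergodicity of $R_\alpha$; with it, the finite family and $g$ become unnecessary. If a gap $J$ meets $T^k\ball_\delta(x)$, then $h(J)\in h(T^k\ball_\delta(x))=R_\alpha^k h(\ball_\delta(x))$. So $R_\alpha^{-k}h(J)$ lies in the closed arc $h(\ball_\delta(x))$, whose length is $\mu(\ball_\delta(x))$. By equidistribution, the upper density of such $k$ is at most $\mu(\ball_\delta(x))$. Summing with weights $|J|$ (reverse Fatou, using $\sum_J|J|\le 1$) gives
\[\limsup_{n\to\infty}\frac1n\sum_{k=0}^{n-1}\lambda\bigl(T^k\ball_\delta(x)\cap G\bigr)\le\mu(\ball_\delta(x))\le 2\delta,\]
so the Cesàro averages of $\diam(T^k\ball_\delta(x))$ have $\limsup$ at most $4\delta$ for $d'$, uniformly in $x$.

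This is the mechanism behind the paper's proof, in packaged form. The paper only notes that the points of the rotation with singleton fibre have full Lebesgue measure (regularity of the factor map), and then invokes a general theorem of \cite{hauser2025meandiameterregularitydiammean}. Once repaired, your approach would be a self-contained direct proof for circle maps.
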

\begin{proof}
    Recall that a continuous surjection $\pi\colon \mathbb{T}\to \mathbb{T}$ is called monotone if $\pi^{-1}(y)$ is connected, i.e.\ a closed arc for all $y\in \mathbb{T}$. 
    It is well known that any Denjoy system allows for a monotone factor map $\pi\colon \mathbb{T}\to \mathbb{T}$ onto an irrational rotation $(\mathbb{T},R)$. For details see \cite[Chapter 11]{katok1995introduction}. 
    It follows that $\pi$ allows for at most countable many $y\in \mathbb{T}$ with $\pi^{-1}(y)$ containing more than one point. 
    Denote $Y_0:=\{y\in \mathbb{T};\, |\pi^{-1}(y)|=1\}$. 
    For the unique invariant measure $\lambda$ (the Lebesgue measure) on $(\mathbb{T},R)$ we observe that $\lambda(Y_0)=1$. 
    This property of $\pi$, called \emph{regularity}, implies the diam-mean equicontinuity as shown in \cite[Remark~5.1 and Theorem~7.11]{hauser2025meandiameterregularitydiammean}.  
\end{proof}

\begin{remark}
    In forthcoming joint work with Maria Isabel Cortez, we show that Sturmian subshifts $(X,T)$ provide examples of non-equicontinuous systems for which $(\hyper(X),T)$ is diam-mean equicontinuous.
\end{remark}

\begin{figure}[ht]
\centering
\begin{tabular}{ccccc}
$\mathcal{M}(X)$ eq. & $\Leftrightarrow$ & $X$ eq.      & $\Leftrightarrow$ & $\mathcal{H}(X)$ eq. \\
$\Downarrow$         &                   & $\Downarrow$ &                   & $\Downarrow$       \\
$\mathcal{M}(X)$ dme & $\Leftrightarrow$ & $X$ dme      & $\Leftarrow$      & $\mathcal{H}(X)$ dme \\
$\Downarrow$         &                   & $\Downarrow$ &                   & $\Updownarrow$       \\
$\mathcal{M}(X)$ me  & $\Leftrightarrow$ & $X$ me       & $\Leftarrow$      & $\mathcal{H}(X)$ me  \\
$\Updownarrow$       &                   & $\Downarrow$ &                   & $\Updownarrow$       \\
$\mathcal{M}(X)$ wme & $\Rightarrow$     & $X$ wme      & $\Leftarrow$      & $\mathcal{H}(X)$ wme   
\end{tabular}
\caption{Our main results: All implications between equicontinuity (eq.), diam-mean equicontinuity (dme), mean equicontinuity (me) and weak-mean equicontinuity (wme). 
}
\end{figure}

A related concept is that of almost diam-mean equicontinuity \cite{garcia2021mean}, which we introduce in Section~\ref{sec:almostDME}. After observing the asymmmetric behaviour of diam-mean equicontinuity w.r.t.\ $\hyper(X)$ and $\myper(X)$ it needs to be highlighted that almost diam-mean equicontinuity behaves significantly different. 
In Section~\ref{sec:almostDME} we show that for an almost diam-mean equicontinuous dynamical system $(X,T)$ also $(\myper(X),T)$ and $(\hyper(X),T)$ are almost diam-mean equicontinuous. 

\begin{remark}
    Mean equicontinuity, weak-mean equicontinuity and (almost) diam-mean equicontinuity can also be studied in the context of actions of locally compact and $\sigma$-compact amenable groups. Our results also hold in this generality, which we will discuss in more detail in Section \ref{sec:groups}. 
\end{remark}

\subsection*{Organization of the text}
After discussing the relevant background in Section~\ref{sec:preliminaries} we show the results related to $\mathcal{M}(X)$ (Theorem~\ref{the:intro_Main}(\ref{intro_Main_M(X)_meanEq}-\ref{intro_Main_M(X)_diammeanEq})) in Section~\ref{sec:inducedDynamicsMeasures}. 
In Section \ref{sec:inducedDynamicsSubsets} we provide the proof of the results related to $\hyper(X)$ (Theorem \ref{the:intro_Main}(\ref{intro_Main_H(X)})). 
The concept of almost diam-mean equicontinuity is introduced and discussed in Section~\ref{sec:almostDME}.  
The discussion of the results in the context of actions of locally compact $\sigma$-compact amenable groups as well as the relevant background on such actions can be found in Section~\ref{sec:groups}. 

\subsection*{Convention}
We equip each topological space with the respective Borel $\sigma$-algebra and simply speak of measurability whenever we speak of measurability with respect to this $\sigma$-algebra. 
Given a compact  metric space $(X,d)$ we also write $d$ for the Hausdorff metric on $\hyper(X)$ and the Wasserstein metric on $\myper(X)$. For the justification of this convention see Remark \ref{rem:HausdorffConvention} and Remark \ref{rem:WassersteinConvention} below.

\section{Preliminaries}
\label{sec:preliminaries}
We write $A\Delta B:=(A\setminus B) \cup (B\setminus A)$ for the symmetric difference of $A$ and $B$. We denote $C(X)$ for the Banach space of all (real-valued) continuous functions on a compact metric space $X$. 
For $f\in C(X)$ we denote 
\begin{align*}
    \lip{f}:=\sup_{x,y\in X}\frac{|f(x)-f(y)|}{d(x,y)}.
\end{align*}
We denote $\mathbb{N}$ for the natural numbers without $0$ and $\mathbb{N}_0:=\mathbb{N}\cup \{0\}$.

\subsection{$\hyper(X)$}
Let $(X,d)$ be a compact metric space. 
We denote $\hyper(X)$ for the \emph{hyperspace of $X$}, i.e.\ the set of all closed and non-empty subsets of $X$. 
For $\epsilon>0$ and $x\in X$ we write
$\ball_\epsilon(x):=\{x'\in X;\, d(x,x')\leq \epsilon\}$ for the \emph{closed ball}. 
For $A\in \hyper(X)$ and $\epsilon>0$ we denote $\ball_\epsilon[A]:=\bigcup_{x\in A}\ball_\epsilon(x)$. 
The \emph{Hausdorff metric} is given by 
\begin{align*}
    d_\hyper(A,B):=\min\{\epsilon>0;\, A\subseteq \ball_\epsilon[B] \text{~~~and~~~}B\subseteq \ball_\epsilon[A]\},
\end{align*}
for $A,B\in \hyper(X)$.
For $x\in X$ and $A\in \hyper(X)$ we denote $d(x,A):=\min_{y\in A}d(x,y)$. 
Note that, as a straightforward consequence of compactness, the infimuma in the above formulas are attained. 

\begin{remark}\cite[Section 3.2]{beer1993topologies}
\label{rem:hausdorff_metric_formulas}
    For $A,B\in \hyper(X)$ we have 
    \begin{align*}
        d_\hyper(A,B)
        &= \max\left\{\max_{x\in A} d(x,B), \max_{x\in B}d(x,A)\right\}\\
        &=\max_{x\in X} |d(x,A)-d(x,B)|. 
    \end{align*}
\end{remark}
It is well known that $d_\hyper$ is a metric on $\hyper(X)$ and that $\hyper(X)$ equipped with the Hausdorff metric $d_\hyper$ is a compact metric space \cite{nadler1992continuum}. 
For $A\in \hyper(X)$ the \emph{diameter of $A$} is given by $\diam(A)=\max_{x,x'\in A}d(x,x')$. 

\begin{remark}
\label{rem:lipschitzContinuity}
    For $x\in X$ and $A\in \hyper(X)$ we have  
    $d(x,\cdot),\diam\in C(\hyper(X))$,
    $d(\cdot,A)\in C(X)$,
    $\lip{d(x,\cdot)}=1$,
    $\lip{\diam}\leq 2$,  and 
    $\lip{d(\cdot,A)}\leq 1$. 
\end{remark}

\begin{remark}
\label{rem:HausdorffConvention}
    Note that $X\to \hyper(X)$ given by $x\mapsto \{x\}$ is an isometric embedding and allows to identify $X$ as a closed subset of $\hyper(X)$. 
    Since $d_\hyper$ extends the metric of this subspace we will use the same symbol for the Hausdorff metric as for the metric on $X$ in the following discussion.    
\end{remark}

\subsection{$\myper(X)$}
Let $(X,d)$ be a compact metric space. 
We denote by $\myper(X)$ the set of all (Borel) probability measures on $X$. For a continuous map $\pi\colon X\to Y$ and $\mu\in \myper(X)$ we denote $\pi_*\mu$ for the \emph{push forward} of $\mu$ under $\pi$, which is defined by $\pi_*\mu(f):=\mu(f\circ \pi)$ for all $f\in C(Y)$. 

For $\mu_1,\mu_2\in \myper(X)$ a \emph{coupling} is a (Borel) probability measure $\kappa$ on $X^2$ with $\pi^{(i)}_*\kappa=\mu_i$, where $\pi^{(i)}\colon X^2\to X$ denote the respective projections for $i=1,2$.
We denote $\Pi(\mu_1,\mu_2)$ for the set of all couplings of $\mu_1$ and $\mu_2$. 
It follows from the continuity of $\pi^{(i)}_*$ that $\Pi(\mu_1,\mu_2)$ is a closed and hence compact subsets of $\myper(X^2)$. 
The \emph{Wasserstein metric} on $\myper(X)$ is defined by 
\[d_\myper(\mu_1,\mu_2):=\min_{\kappa\in \Pi(\mu_1,\mu_2)} \kappa(d).\]
\begin{remark}
\label{rem:preliminaries_WassersteinLipschitz}
    For $\mu_1,\mu_2\in \myper(X)$ we have 
    \begin{align*}
        d_\myper(\mu_1,\mu_2)
        =\sup_{f\in C(X);\, \|f\|_{\operatorname{Lip}}\leq 1} |\mu_1(f)-\mu_2(f)|.
    \end{align*}
\end{remark}
It is well known that the Wasserstein metric is a metric on $\myper(X)$ and that $\myper(X)$ equipped with $d_\myper$ is a compact metric space. The respective topology is the weak*-topology, given by the identification $\mathcal{M}(X)\subseteq C(X)^*$ via the Riesz-Markov-Kakutani representation theorem. 
For reference and further details see 
\cite[Chapter 7]{villani2003topics}. 

\begin{remark}
\label{rem:WassersteinConvention}
    For $x\in X$ we denote $\delta_x$ for the \emph{Dirac measure} at $x$.
    Note that $\delta\colon X\to \myper(X)$ given by $x\mapsto\delta_x$ is an isometric embedding and allows to identify $X$ as a closed subset of $\myper(X)$. 
    Since $d_\myper$ extends the metric of this subspace we will use the same symbol for the Wasserstein metric as for the metric on $X$ in the following discussion.    
\end{remark}

\subsection{Supports}
Let $X$ be a compact metric space. 
For $\mu\in \mathcal{M}(X)$ we denote $\supp(\mu)$ for the \emph{support of $\mu$}, i.e.\ the smallest closed subset $A\subseteq X$ that satisfies $\mu(A)=1$. It is given by the set of all $x\in X$ for which any open neighbourhood $U$ satisfies $\mu(U)>0$.

\subsection{Markov kernels and disintegration of measures}
Let $X$ and $Y$ be compact metric spaces.  
A \emph{Markov kernel} (also \emph{transition probability}) is a measurable function $\mu_{(\cdot)}\colon Y\to \mathcal{M}(X)$. 
Note that a function $\mu_{(\cdot)}$ is a Markov kernel if and only if for all $f\in C(X)$ the mapping $Y\ni y\mapsto \mu_y(f)$ is measurable, which in turn is equivalent to the measurability of $Y\ni y\mapsto \mu_y(A)$ for all measurable subsets $A\subseteq X$. 
For reference of the following see \cite[Theorem~10.7.2]{bogachev2007measure}.

\begin{lemma}
\label{lem:markovKernelmeasurabilityIntegrant}
    If $\mu_{(\cdot)}\colon Y\to \mathcal{M}(X)$ is a Markov kernel and $f\colon X\times Y\to \mathbb{R}$ is bounded and measurable, then 
    $Y\ni y\mapsto \int_X f(x,y)\dd\mu_y(x)$ is measurable.     
\end{lemma}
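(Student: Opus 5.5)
The plan is a monotone class argument: the property holds for product-type integrands by the definition of a Markov kernel, and the class of integrands with the property is closed under the operations a monotone class theorem needs.

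Let $\mathcal{F}$ be the class of all bounded measurable $f\colon X\times Y\to\mathbb{R}$ for which $y\mapsto \int_X f(x,y)\dd\mu_y(x)$ is measurable. The goal is to show that $\mathcal{F}$ contains every bounded measurable function. Since $X$ and $Y$ are compact metric spaces, hence second countable, the Borel $\sigma$-algebra of $X\times Y$ coincides with the product $\sigma$-algebra. It therefore suffices to show that $\mathcal{F}$ contains the indicators of measurable rectangles and is closed under the operations required by the functional monotone class theorem.

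First I check rectangles. For $f=1_{A\times B}$ with $A\subseteq X$ and $B\subseteq Y$ measurable, we get
\[\int_X f(x,y)\dd\mu_y(x)=1_B(y)\,\mu_y(A).\]
This is measurable in $y$, because $y\mapsto\mu_y(A)$ is measurable by the characterization of Markov kernels recalled just before the lemma. Rectangles form a $\pi$-system generating the product $\sigma$-algebra, and the constant function $1$ is one of these indicators.

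Next I verify the closure properties. $\mathcal{F}$ is a vector space, by linearity of the integral and because linear combinations of measurable functions are measurable. $\mathcal{F}$ is closed under bounded increasing limits: if $0\le f_n\uparrow f$ with $f$ bounded and $f_n\in\mathcal{F}$, then by monotone convergence, applied separately for each fixed $y$, we get $\int f_n(\cdot,y)\dd\mu_y\to\int f(\cdot,y)\dd\mu_y$. A pointwise limit of measurable functions is measurable, so $f\in\mathcal{F}$. Note also that each section $x\mapsto f(x,y)$ is Borel measurable, so each integral is well defined.

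By the functional monotone class theorem (equivalently, Dynkin's $\pi$-$\lambda$ theorem applied to the sets $E$ with $1_E\in\mathcal{F}$, followed by approximation through simple functions), $\mathcal{F}$ contains all bounded measurable functions. The only step needing care is the identification of the Borel $\sigma$-algebra of $X\times Y$ with the product $\sigma$-algebra, which is where separability of the compact metric spaces is used. With that in place, there is no real obstacle.
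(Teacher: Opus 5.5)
Your monotone class argument is correct and complete. Rectangles give $1_B(y)\mu_y(A)$, the class is a vector space containing constants and closed under bounded increasing limits, and identifying the Borel $\sigma$-algebra of $X\times Y$ with the product $\sigma$-algebra is exactly where separability is needed. The paper gives no proof of its own and simply cites \cite[Theorem~10.7.2]{bogachev2007measure}, which is the standard argument you reproduce: measurability of $y\mapsto \mu_y(\{x;\,(x,y)\in E\})$ for every $E$ in the product $\sigma$-algebra via a monotone class argument on rectangles, followed by extension to bounded measurable functions.
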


If $\pi\colon X\to Y$ is a continuous surjection and $\mu\in \mathcal{M}(X)$, then there exists a Markov kernel $\mu_{(\cdot)}\colon Y\to \mathcal{M}(X)$ such that $\supp(\mu_y)\subseteq \pi^{-1}(y)$ for all $y\in Y$ and such that $\mu=\int_Y\mu_y\dd (\pi_*\mu)(y)$ \cite[Proposition~10.4.12]{bogachev2007measure}. 
Such a Markov kernel $\mu_{(\cdot)}$ is called a \emph{disintegration of $\mu$ over $\pi$}.

If $\pi\colon X\to Z$ is a continuous surjection, then $\pi_*\colon \mathcal{M}(X)\to \mathcal{M}(Z)$ is continuous and hence measurable. 
For a Markov kernel $\mu_{(\cdot)}\colon Y\to \mathcal{M}(X)$ we denote $\pi_*\mu_{(\cdot)}$ for the Markov kernel $Y\mapsto \mathcal{M}(Z)$ defined by $y\mapsto \pi_*\mu_{y}$ and speak of the \emph{push forward of $\mu_{(\cdot)}$ under $\pi$}. 

Denote $\pi^{(1)}$ and $\pi^{(2)}$ for the respective projections $X^2\to X$.  
For $\mu,\nu\in \mathcal{M}(X)$ and $\kappa\in \Pi(\mu,\nu)$ we consider a disintegration $\kappa_{(\cdot)}$ of $\kappa$ over $\pi^{(1)}$. 
The Markov kernel $\nu_{(\cdot)}:=\pi^{(2)}_*\kappa_{(\cdot)}$ is called a \emph{disintegration of $\nu$ induced by $\kappa$}. 
Note that we have $\kappa_x=\delta_x\times \nu_x$ for all $x\in X$ and hence $\kappa=\int_X (\delta_x\times \nu_x) \dd\mu(x)$.
In particular, we have $\nu=\int_X \nu_x \dd \mu(x)$

\subsection{Dynamical systems}
A \emph{dynamical system} $(X,T)$ is given by a compact metric space $X$ and a continuous surjection $T\colon X\to X$. 
For $x\in X$ we denote $Tx:=T(x)$. 
A subset $A\in \hyper(X)$ is called invariant if 
$A\supseteq TA:=\{Tx;\, x\in X\}$. 
We also denote $T$ for the restriction of $T$ to a map $A\to A$ and speak of a \emph{subsystem $(A,T)$ of $(X,T)$.} 
A mapping $\pi\colon X\to Y$ between dynamical systems $(X,T)$ and $(Y,S)$ is called \emph{equivariant} if $\pi\circ T=S\circ \pi$.
An equivariant continuous surjection is called a \emph{factor map}.
 
Let $(X,T)$ be a dynamical system.
For $n\in \mathbb{N}$ and $(x_1,\dots, x_n)\in X^n$ we denote $T(x_1,\dots, x_n):=(Tx_1,\dots, Tx_n)$.
For $A\in \hyper(X)$ we denote 
$TA:=\{T.x;\, x\in X\}$. 
For $\mu\in \myper(X)$ we denote $T\mu:=T_*\mu$ for the push forward.
The respective mappings $X^n\to X^n$, $\hyper(X)\to \hyper(X)$ and $\myper(X)\to \myper(X)$ are continuous surjections. Since $(X,T)$ can be identified as a subsystem of $(X^n,T)$ (via $x\mapsto (x,\dots, x)$), of $(\hyper(X),T)$ (via $x\mapsto \{x\}$) and of $(\myper(X),T)$ (via $x\mapsto \delta_x$) we simply denote $T$ for the respective induced maps. 

For $f\in C(X)$ we denote $T^*f:=f\circ T$. 
For $n\in \mathbb{N}$ we denote 
\[F_n^*f:=\frac{1}{n}\sum_{k=0}^{n-1} (T^k)^*f
\hspace{1cm}\text{and}\hspace{1cm}
(F_n)_*\mu:=\frac{1}{n}\sum_{k=0}^{n-1} T^k\mu\]
for the \emph{Cesàro averages}. 
Note that we have  
$F_n^*f\in C(X)$, 
$(F_n)_*\mu\in \myper(X)$ and 
$(F_n)_*\mu(f)=\mu(F_n^*f)$.

\subsection{(Diam-/Weak-)mean equicontinuity}
\label{subsec:diamWeakMeanEquicontinuity}
For further reading on the following notions see 
\cite{zheng2020new, xu2024weak, li2021meanEquicontinuity, garcia2021mean, hauser2025meandiameterregularitydiammean}
and the references within. 
Let $(X,T)$ be a dynamical system. 
The \emph{$\mathcal{F}-$mean-orbital pseudometric} is defined by  
\begin{align*}
    \meanorbPM_\mathcal{F}(x,y):=\limsup_{n\to \infty}d((F_n)_*\delta_x,(F_n)_*\delta_y). 
\end{align*}
Note that $d\in C(X^2)$ and consider $(X^2,T)$. 
The \emph{$\mathcal{F}-$mean pseudo-metric} (also \emph{Besicowitch pseudometric}) is defined by 
\begin{align*}
    \meanPM_\mathcal{F}(x,y):=\limsup_{n\to \infty}(F_n^*d)(x,y). 
\end{align*}
Note that $\diam\in C(\hyper(X))$ and consider $(\hyper(X),T)$. 
For $A\in \hyper(X)$ the \emph{$\mathcal{F}$-mean diameter} is defined by 
\begin{align*}
    \Diam_\mathcal{F}(A):=\limsup_{n\to \infty}(F_n^*\diam)(A). 
\end{align*}
A point $x\in X$ is said to be a
\begin{itemize}
    \item 
    \emph{$\mathcal{F}$-weak-mean equicontinuity point} if for all $\epsilon>0$ there exists $\delta>0$ such that for all $y\in \ball_\delta(x)$ we have $\meanorbPM_\mathcal{F}(x,y)<\epsilon$. 
    \item 
    \emph{$\mathcal{F}$-mean equicontinuity point} if for all $\epsilon>0$ there exists $\delta>0$ such that for all $y\in \ball_\delta(x)$ we have $\meanPM_\mathcal{F}(x,y)<\epsilon$.
    \item 
    \emph{$\mathcal{F}$-diam-mean equicontinuity point} if for all $\epsilon>0$ there exists $\delta>0$ such that $\Diam_\mathcal{F}(\ball_\delta(x))<\epsilon$. 
\end{itemize}
$(X,T)$ is called \emph{weakly-mean equicontinuous} if all $x\in X$ are $\mathcal{F}$-weak-mean equicontinuity points. 
Similarly, we define the \emph{mean equicontinuity} and the \emph{$\mathcal{F}$-diam-mean equicontinuity} of $(X,T)$. 

\begin{remark}
    In Section \ref{sec:groups} we will consider these notions in the context of actions of $\sigma$-compact locally compact amenable groups. Adding the $\mathcal{F}$ to the notions allows to avoid a notational conflict below. 
\end{remark}

\begin{remark}
    These notions are independent from the choice of a continuous metric on $X$. For details on the argument see \cite[Lemma 3.4]{hauser2025meandiameterregularitydiammean}. 
\end{remark}

\begin{remark}
\label{rem:meanOrbitalReformulation}
\label{rem:meanPMDiam_relations}
\label{rem:prelims_basicRelationshipWME_ME_DME}
    As presented in \cite[Appendix A]{xu2024weak} it follows from the Birkhoff-von Neumann Theorem that 
    \[
    d((F_n)_*\delta_x,(F_n)_*\delta_y)=\inf_{\sigma\in S_n}\frac{1}{n}\sum_{k=0}^{n-1}d(T^kx,T^{\sigma(k)}y), 
    \]
    where $S_n$ denotes the set of all permutations on $\{0,\dots, n-1\}$. 
    In particular, for $x,y\in X$ we have 
    \[
        \meanorbPM_\mathcal{F}(x,y)
        \leq \meanPM_\mathcal{F}(x,y)
        =\Diam_\mathcal{F}(\{x,y\})
    .\]
    It follows that $(\mathcal{F}$-)diam-mean equicontinuity 
    implies $(\mathcal{F}$-)mean equicontinuity, and that
    $(\mathcal{F}$-)mean equicontinuity 
    implies ($\mathcal{F}$-)weak-mean equicontinuity.
\end{remark}

\begin{remark}
\label{rem:WMEProduct}
    $(X,T)$ is mean equicontinuous if and only if $(X^2,T)$ is weakly mean equicontinuous. This was shown in \cite{fuhrmann2022structure, xu2024weak} in the context of group actions and the respective arguments also apply for dynamical systems $(X,T)$. 
\end{remark}

The following lemma will be important in the following discussion and allows to observe (in combination with Remark \ref{rem:meanOrbitalReformulation}) that the notions defined here are equivalent to the notions defined in the introduction. 

\begin{lemma}
\label{lem:deltaIndependenceOfNotions}
    Let $A\in \hyper(X)$. 
    \begin{itemize}
        \item[(i)] 
        $A$ consists of $\mathcal{F}$-weak-mean equicontinuity points if and only if for all $\epsilon>0$ there exists $\delta>0$ such that for all $x\in A$ and $x'\in X$ with $d(x,x')<\delta$ we have $\meanorbPM_\mathcal{F}(x,x')<\epsilon$. 
        \item[(ii)] 
        $A$ consists of $\mathcal{F}$-mean equicontinuity points if and only if for all $\epsilon>0$ there exists $\delta>0$ such that for all $x\in A$ and $x'\in X$ with $d(x,x')<\delta$ we have $\meanPM_\mathcal{F}(x,x')<\epsilon$. 
        \item[(iii)]
        $A$ consists of $\mathcal{F}$-diam-mean equicontinuity points if and only if for all $\epsilon>0$ there exists $\delta>0$ such that for all $x\in A$ we have $\Diam_\mathcal{F}(\ball_\delta(x))<\epsilon$.        
    \end{itemize}
\end{lemma}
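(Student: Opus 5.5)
The "if" directions are immediate in all three cases: given $\epsilon>0$ and the uniform $\delta$, each $x\in A$ satisfies the pointwise definition with this $\delta$. (In (iii), $\ball_{\delta'}(x)\subseteq\ball_\delta(x)$ for $\delta'\leq\delta$, and the mean diameter is monotone.) The work is in the "only if" directions. For these I will use compactness of $A$ and the triangle inequality. The triangle inequality holds for $\meanorbPM_\mathcal{F}$ and $\meanPM_\mathcal{F}$, since both are limsups of pseudometrics: $d((F_n)_*\delta_x,(F_n)_*\delta_y)$ is a Wasserstein distance, and $F_n^*d$ is an average of pseudometrics $d(T^k\cdot,T^k\cdot)$. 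In (iii) it is replaced by monotonicity of $\Diam_\mathcal{F}$ under inclusion.

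For (i) and (ii), fix $\epsilon>0$ and write $\rho$ for $\meanorbPM_\mathcal{F}$ or $\meanPM_\mathcal{F}$. For each $a\in A$, choose $\delta_a>0$ such that $\rho(a,y)<\epsilon/2$ for all $y\in\ball_{\delta_a}(a)$. The open balls of radius $\delta_a/2$ around the points $a$ cover $A$, so finitely many of them, centered at $a_1,\dots,a_m$, already cover $A$. Set $\delta:=\min_i\delta_{a_i}/2$. Let $x\in A$ and $x'\in X$ with $d(x,x')<\delta$, and pick $i$ with $d(x,a_i)<\delta_{a_i}/2$. Then $d(x',a_i)<\delta_{a_i}$, so both $x$ and $x'$ lie in $\ball_{\delta_{a_i}}(a_i)$. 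The triangle inequality gives
\[
\rho(x,x')\leq \rho(x,a_i)+\rho(a_i,x')<\epsilon .
\]

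For (iii) the argument is the same covering argument, but with inclusion of balls. For each $a\in A$, choose $\delta_a$ with $\Diam_\mathcal{F}(\ball_{\delta_a}(a))<\epsilon$. Extract a finite subcover of $A$ by the open balls of radius $\delta_a/2$, centered at $a_1,\dots,a_m$, and set $\delta:=\min_i\delta_{a_i}/3$. For $x\in A$, pick $i$ with $d(x,a_i)<\delta_{a_i}/2$. Then
\[
\ball_\delta(x)\subseteq \ball_{\delta_{a_i}/2+\delta_{a_i}/3}(a_i)\subseteq\ball_{\delta_{a_i}}(a_i).
\]
Monotonicity of $\diam$ under inclusion, and hence of $F_n^*\diam$ and of the limsup, yields $\Diam_\mathcal{F}(\ball_\delta(x))<\epsilon$.

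I do not expect a real obstacle here. The one point needing care is that $\meanorbPM_\mathcal{F}$ satisfies the triangle inequality. This holds because the Wasserstein metric $d_\myper$ is a metric, so $d((F_n)_*\delta_x,(F_n)_*\delta_z)\leq d((F_n)_*\delta_x,(F_n)_*\delta_y)+d((F_n)_*\delta_y,(F_n)_*\delta_z)$ for each $n$, and $\limsup$ is subadditive. The same reasoning applies to $\meanPM_\mathcal{F}$.
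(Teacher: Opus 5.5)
Your proof is correct and is essentially the paper's argument. Both use only compactness of $A$ together with the triangle inequality for $\meanorbPM_\mathcal{F}$ and $\meanPM_\mathcal{F}$ (respectively monotonicity of $\Diam_\mathcal{F}$ under inclusion of balls); the paper argues by contraposition with a convergent sequence $x_n\to x\in A$, while you use a finite subcover directly. One small slip: in the ``if'' direction of (i) and (ii), pass from the uniform $\delta$ to $\delta/2$, because $\ball_\delta(x)$ is the closed ball while the uniform condition only covers $d(x,x')<\delta$.
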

\begin{proof}
(i): 
    Clearly the right hand side implies the $\mathcal{F}$-weak-mean equicontinuity of all $x\in A$. 
    For the converse we argue by contraposition. 
    Assume that the latter is not satisfied. 
    Then there exists $\epsilon>0$ such that for all $n\in \mathbb{N}$ there exist $x_n\in A$ and $x_n'\in X$ with $d(x_n,x_n')<\tfrac{1}{n}$ and $\meanorbPM_\mathcal{F}(x_n,x_n')\geq \epsilon$. 
    Since $A$ is compact we assume w.l.o.g.\ that $x_n\to x$ for some $x\in A$.
    It follows that also $x_n'\to x$. 
    Since $A$ consists of $\mathcal{F}$-mean equicontinuity points there exists $\delta>0$ such that for all $y\in X$ with $d(x,y)<\delta$ we have $\meanorbPM_\mathcal{F}(x,y)<\epsilon/2$.
    For large $n\in \mathbb{N}$ we clearly have $d(x,x_n)<\delta$ and $d(x,x_n')<\delta$ and hence 
    \[
        \epsilon\leq \meanorbPM_\mathcal{F}(x_n,x_n')\leq \meanorbPM_\mathcal{F}(x_n,x)+\meanorbPM_\mathcal{F}(x,x_n')< \epsilon,
    \]
    a contradiction. 

(ii): 
    Follows from a similar argument as (i).

(iii): 
    Clearly the latter implies the former. For the converse we argue by contraposition and assume that the latter is not satisfied. 
    There exists $\epsilon>0$ and a sequence $(x_n)_{n\in \mathbb{N}}$ in $A$ such that $\Diam(\ball_{1/n}(x_n))\geq \epsilon$ for all $n\in \mathbb{N}$. 
    Since $A$ is compact we assume w.l.o.g.\ that $x_n\to x$ in $A$. 
    From $x\in A$ we know that $x$ is diam-mean equicontinuous and there exists $\delta>0$ such that $\Diam(\ball_\delta(x))<\epsilon$. 
    However, for sufficiently large $n\in \mathbb{N}$ we have $\ball_{1/n}(x_n)\subseteq \ball_\delta(x)$ and hence
    \[\epsilon\leq \Diam(\ball_{1/n}(x_n))\leq \Diam(\ball_\delta(x))<\epsilon,\]
    a contradiction. 
\end{proof}

\section{Induced dynamics on Borel probability measures}
\label{sec:inducedDynamicsMeasures}
Let $(X,T)$ be a dynamical system and denote $d$ for the metric on $X$. 
Recall that $x\mapsto \delta_x$ is an isometric embedding of $X$ into $\myper(X)$, which allows us to use the same symbol $d$ for the metric on $X$ and the Wasserstein metric on $\myper(X)$. 
Since this embedding is equivariant we can identify $(X,T)$ as a subsystem of $(\myper(X),T)$ and it follows that the $\mathcal{F}$-mean pseudometric on $(\myper(X),T)$ w.r.t.\ $d$ is an extension of the $\mathcal{F}$-mean pseudometric of $(X,T)$ w.r.t.\ $d$. 
This allows to write $D_\mathcal{F}$ for the $\mathcal{F}$-mean pseudometric of both, $(\myper(X),T)$ and $(X,T)$. 
Similarly, we simplify the notation and denote $\meanorbPM_\mathcal{F}$ for the $\mathcal{F}$-mean-orbital pseudometric, and $\Diam_\mathcal{F}$ for the $\mathcal{F}$-mean diameter of both, $(\myper(X),T)$ and $(X,T)$.

\subsection{Mean equicontinuity of $\myper(X)$}
We next show that any $\mu\in \myper(X)$ whose support consists of $\mathcal{F}$-mean equicontinuity points is a $\mathcal{F}$-mean equicontinuity point of $(\myper(X),T)$. For this we need the following lemmas. 

\begin{lemma}
\label{lem:MX_meanPseudometricPreparationsI}
    For $\mu,\nu\in \mathcal{M}(X)$ and 
    $\kappa\in \Pi(\mu,\nu)$ we have 
    \[(T^*d)(\mu,\nu)\leq \int_{X^2}(T^*d)(x,x')\dd \kappa(x,x')\]
\end{lemma}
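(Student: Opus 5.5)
Here $(T^*d)(\mu,\nu)$ denotes $d(T\mu,T\nu)$, the Wasserstein distance between the push forwards $T_*\mu$ and $T_*\nu$. This is the function $T^*d$ on $\myper(X)^2$, where $d$ is the Wasserstein metric. The integrand $(T^*d)(x,x')=d(Tx,Tx')$ is the metric on $X$. So the claim reads
\[
d(T\mu,T\nu)\leq \int_{X^2} d(Tx,Tx')\dd\kappa(x,x').
\]
The plan is to produce an explicit coupling of $T\mu$ and $T\nu$ out of $\kappa$ and then use the definition of the Wasserstein metric as a minimum over couplings.

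Set $\tilde\kappa:=(T\times T)_*\kappa\in\myper(X^2)$. For $i=1,2$ we have $\pi^{(i)}\circ(T\times T)=T\circ\pi^{(i)}$, and therefore
\[
\pi^{(i)}_*\tilde\kappa=T_*\pi^{(i)}_*\kappa,
\]
which equals $T\mu$ for $i=1$ and $T\nu$ for $i=2$. Hence $\tilde\kappa\in\Pi(T\mu,T\nu)$.

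By the definition of $d_\myper$ as a minimum over couplings,
\[
d(T\mu,T\nu)\leq \tilde\kappa(d)=\int_{X^2} d\circ(T\times T)\dd\kappa=\int_{X^2}(T^*d)(x,x')\dd\kappa(x,x').
\]
The middle equality is the change of variables formula for push forwards, which applies since $d\circ(T\times T)$ is continuous.

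There is no real obstacle here. The only point requiring care is the notational identification of $T^*d$ on $\myper(X)^2$, where $d$ is the Wasserstein metric, with $T^*d$ on $X^2$, where $d$ is the original metric. The verification of the marginals is routine.
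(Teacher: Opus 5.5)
Your proof is correct and is essentially the paper's argument: the paper also observes that $T\kappa=(T\times T)_*\kappa$ lies in $\Pi(T\mu,T\nu)$, then uses the definition of the Wasserstein metric as a minimum over couplings together with $(T\kappa)(d)=\kappa(T^*d)$. Your explicit check of the marginals via $\pi^{(i)}\circ(T\times T)=T\circ\pi^{(i)}$ just spells out a step the paper leaves implicit.
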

\begin{proof}
    We have $T\kappa\in \Pi(T\mu,T\nu)$ and the definition of the Wasserstein metric yields 
    $
        (T^*d)(\mu,\nu)
        = d(T\mu,T\nu)
        \leq (T\kappa)(d)
        = \int_{X^2}(T^*d)(x,x')\dd \kappa(x,x').
    $
\end{proof}

\begin{lemma}
\label{lem:MX_meanPseudometricPreparationsII}
    For $\mu,\nu\in \mathcal{M}(X)$ and 
    $\kappa\in \Pi(\mu,\nu)$ we have 
    \[\meanPM_\mathcal{F}(\mu,\nu)\leq \int_{X^2}\meanPM_\mathcal{F}(x,x')\dd \kappa(x,x')\]
\end{lemma}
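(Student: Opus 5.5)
We apply Lemma~\ref{lem:MX_meanPseudometricPreparationsI} to each iterate and then pass to the limsup with Fatou's lemma.

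First, for each $k\in\mathbb{N}_0$, Lemma~\ref{lem:MX_meanPseudometricPreparationsI} applied to the continuous surjection $T^k$ gives
\[((T^k)^*d)(\mu,\nu)=d(T^k\mu,T^k\nu)\leq \int_{X^2}((T^k)^*d)(x,x')\dd\kappa(x,x').\]
The proof given there uses only that $T^k\kappa\in\Pi(T^k\mu,T^k\nu)$, so it applies verbatim with $T^k$ in place of $T$. Averaging over $k=0,\dots,n-1$ and using linearity of the integral yields
\[(F_n^*d)(\mu,\nu)\leq \int_{X^2}(F_n^*d)(x,x')\dd\kappa(x,x')\]
for every $n\in\mathbb{N}$.

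Next I take $\limsup_{n\to\infty}$ on both sides. The left side becomes $\meanPM_\mathcal{F}(\mu,\nu)$ by definition, so it remains to show
\[\limsup_{n\to\infty}\int_{X^2}(F_n^*d)\dd\kappa\leq\int_{X^2}\limsup_{n\to\infty}(F_n^*d)\dd\kappa=\int_{X^2}\meanPM_\mathcal{F}\dd\kappa.\]
This is the reverse Fatou lemma. Its hypothesis holds because the functions $F_n^*d$ are continuous and uniformly bounded by $\diam(X)$, hence dominated by the integrable constant $\diam(X)$ on the probability space $(X^2,\kappa)$.

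The only delicate point is measurability of $\meanPM_\mathcal{F}$ on $X^2$, needed for the right-hand integral to make sense. This holds because $\meanPM_\mathcal{F}$ is a pointwise limsup of a sequence of continuous functions, and so is Borel measurable. With that, the chain of inequalities gives the claim.
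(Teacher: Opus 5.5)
Your proof is correct and follows the paper's argument exactly. You apply Lemma~\ref{lem:MX_meanPseudometricPreparationsI} to $T^k$, average over $k<n$, and pass to the limit superior. You are also right that the last step is the reverse Fatou lemma, justified by the uniform bound $F_n^*d\leq\diam(X)$; the paper simply calls it Fatou's lemma.
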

\begin{proof}
    For $k\in \mathbb{N}_0$ we observe from Lemma \ref{lem:MX_meanPseudometricPreparationsI} (applied to $(X,T^k)$) that 
    \[((T^k)^*d)(\mu,\nu)\leq \int_{X^2}((T^k)^*d)(x,x')\dd \kappa(x,x')\]
    and hence 
    \[(F_n^*d)(\mu,\nu)\leq \int_{X^2}(F_n^*d)(x,x')\dd \kappa(x,x').\]
    From Fatou's lemma we conclude
    \begin{align*}
        \meanPM_\mathcal{F}(\mu,\nu)
        &=\limsup_{n\to \infty}(F_n^*d)(\mu,\nu)\\
        &\leq \limsup_{n\to \infty}\int_{X^2}(F_n^*d)(x,x')\dd \kappa(x,x')\\
        &\leq \int_{X^2}\limsup_{n\to \infty}(F_n^*d)(x,x')\dd \kappa(x,x')\\
        &= \int_{X^2}\meanPM_\mathcal{F}(x,x')\dd \kappa(x,x').
        \qedhere
    \end{align*}
\end{proof}

\begin{proposition}
\label{pro:supportAndMeanEquicontinuousPoints}
    Let $(X,T)$ be a dynamical system and $\mu\in \mathcal{M}(X)$. 
    If $\supp(\mu)$ consists of $\mathcal{F}$-mean equicontinuity points, then $\mu$ is a $\mathcal{F}$-mean equicontinuity point of $(\mathcal{M}(X),G)$. 
\end{proposition}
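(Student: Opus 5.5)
We take an arbitrary $\epsilon>0$ and aim to find $\delta>0$ such that every $\nu\in\myper(X)$ with $d(\mu,\nu)<\delta$ (Wasserstein distance) satisfies $\meanPM_\mathcal{F}(\mu,\nu)<\epsilon$. The tool is Lemma~\ref{lem:MX_meanPseudometricPreparationsII}: for an optimal coupling $\kappa\in\Pi(\mu,\nu)$ we have $\meanPM_\mathcal{F}(\mu,\nu)\leq\int_{X^2}\meanPM_\mathcal{F}(x,x')\dd\kappa(x,x')$. It therefore suffices to bound this integral. Note that $\meanPM_\mathcal{F}\leq \diam(X)=:M$ everywhere. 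For measurability of the integrand, observe that $\meanPM_\mathcal{F}$ is a $\limsup$ of continuous functions on $X^2$, hence Borel.

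First we apply Lemma~\ref{lem:deltaIndependenceOfNotions}(ii) to $A:=\supp(\mu)$. This gives $\eta>0$ such that $\meanPM_\mathcal{F}(x,x')<\epsilon/2$ for all $x\in\supp(\mu)$ and $x'\in X$ with $d(x,x')<\eta$. We then split $X^2$ into the set $G:=\{(x,x');\, x\in\supp(\mu),\ d(x,x')<\eta\}$ and its complement. On $G$ the integrand is below $\epsilon/2$. Since $\kappa$ has first marginal $\mu$, we have $\kappa(\supp(\mu)\times X)=1$. Hence the complement of $G$ has, up to a null set, $\kappa$-measure at most $\kappa(\{d\geq\eta\})$. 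By Markov's inequality,
\[\kappa(\{d\geq \eta\})\leq \frac{\kappa(d)}{\eta}=\frac{d(\mu,\nu)}{\eta}<\frac{\delta}{\eta}.\]

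This gives
\[\meanPM_\mathcal{F}(\mu,\nu)\leq \frac{\epsilon}{2}+M\frac{\delta}{\eta},\]
and choosing $\delta:=\epsilon\eta/(2M+1)$ makes the right-hand side less than $\epsilon$, which finishes the proof.

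The only point needing care is the uniformity in $x$ over the support. This is exactly what Lemma~\ref{lem:deltaIndependenceOfNotions}(ii) supplies, via compactness of $\supp(\mu)$. Without it, the pointwise $\delta$'s could shrink along the support and the Markov estimate would fail. The remaining checks are routine: existence of an optimal coupling, which holds because $\Pi(\mu,\nu)$ is compact, and the measurability remarks above.
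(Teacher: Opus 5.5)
Your proof is correct and follows essentially the same route as the paper. Both use Lemma~\ref{lem:deltaIndependenceOfNotions}(ii) to get a uniform $\eta$ over $\supp(\mu)$, split $X^2$ according to whether $d(x,x')\geq\eta$, bound the bad part by $\diam(X)\,\kappa(\{d\geq\eta\})\leq\diam(X)\,\kappa(d)/\eta$, and conclude via Lemma~\ref{lem:MX_meanPseudometricPreparationsII}. The only cosmetic differences are that you use $\kappa(\supp(\mu)\times X)=1$ where the paper uses $\supp(\kappa)\subseteq\supp(\mu)\times X$, and that your choice $\delta=\epsilon\eta/(2M+1)$ harmlessly avoids dividing by $\diam(X)$.
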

\begin{proof}
    Let $\epsilon>0$. 
    By Lemma \ref{lem:deltaIndependenceOfNotions} there exists $\delta'>0$ such that for any $x\in \supp(\mu)$ and any $x'\in X$ with $d(x,x')<\delta'$ we have 
    $D_\mathcal{F}(x,x')<\epsilon/2$.    
    Denote $\delta:= \delta' \epsilon/(2\diam(X))$ and 
    $A:=\{(x,x')\in X^2;\, d(x,x')\geq\delta'\}$. 
    Consider $\nu\in \mathcal{M}(X)$ with $\wasser(\mu,\nu)<\delta$. 
    We next show that $D_\mathcal{F}(\mu,\nu)<\epsilon$. 
    
    By the definition of the Wasserstein metric there exists $\kappa\in \Pi(\mu,\nu)$ such that $\kappa(d)<\delta$. 
    We have
    \begin{align*}
        \int_A \delta' \dd\kappa
        \leq  \int_A d \dd\kappa
        \leq \kappa(d)
        < \delta = \frac{\delta'\epsilon}{2\diam(X)}.
    \end{align*}
    and hence 
    \begin{align}
        \label{enu:MXMeanEquicontinuityPtsI}
        \int_A D_\mathcal{F}\dd \kappa
        \leq\int_A \diam(X) \dd\kappa 
        < \frac{\epsilon}{2}. 
    \end{align}
    Furthermore, from $\kappa\in \Pi(\mu,\nu)$ we observe $\supp(\kappa)\subseteq \supp(\mu)\times X$. 
    Thus, for $(x,x')\in \supp(\kappa)\setminus A$ we have $x\in \supp(\mu)$ and $d(x,x')<\delta'$ and our choice of $\delta'$ yields $D_\mathcal{F}(x,x')<\epsilon/2$. 
    It follows that 
    \begin{align}
        \label{enu:MXMeanEquicontinuityPtsII}
        \int_{X^2\setminus A} D_\mathcal{F} d\kappa
        =\int_{\supp(\kappa)\setminus A} D_\mathcal{F} d\kappa
        \leq \frac{\epsilon}{2}. 
    \end{align}    
    Combining Lemma \ref{lem:MX_meanPseudometricPreparationsII} with \eqref{enu:MXMeanEquicontinuityPtsI} and \eqref{enu:MXMeanEquicontinuityPtsII} we conclude
    \begin{align*}
        D_\mathcal{F}(\mu,\nu)
        &\leq \int_{X^2} D_\mathcal{F}\dd \kappa
        < \epsilon.
        \qedhere
    \end{align*}
\end{proof}

\begin{proof}[Proof of Theorem \ref{the:intro_Main}(\ref{intro_Main_M(X)_meanEq}):]
    If $(X,T)$ is mean equicontinuous it follows from Proposition \ref{pro:supportAndMeanEquicontinuousPoints} that $(\myper(X),T)$ is mean equicontinuous. Since any mean equicontinuous action is weakly-mean equicontinuous it remains to show that the weak-mean equicontinuity of $(\myper(X),T)$ implies the mean equicontinuity of $(X,T)$. 
    
    For this consider the mapping $\phi\colon X^2\to \mathcal{M}(X)$ defined by $(x,x')\mapsto \tfrac{1}{3}\delta_x + \tfrac{2}{3}\delta_{x'}$ and note that $\phi$ is injective, equivariant and continuous. Since $X^2$ is compact $\phi$ is a conjugacy onto its image. 
    It follows that $(X^2,T)$ can be identified with the subsystem $(\phi(X^2),T)$ of $(\mathcal{M}(X),T)$. 
    Thus, whenever $(\mathcal{M}(X),T)$ is weakly mean equicontinuous, also $(X^2,T)$ is weakly mean equicontinuous and Remark~\ref{rem:WMEProduct}
    yields that $(X,T)$ is mean equicontinuous. 
\end{proof}

\subsection{Diam-mean equicontinuity of $\myper(X)$}
We next show the analog of Proposition~\ref{pro:supportAndMeanEquicontinuousPoints} for the notion of diam-mean equicontinuity. For this we need the following lemmas. 

\begin{lemma}
    \label{lem:meanDiameterPreparationsI}
    For $x\in X$, $\mu,\nu\in \mathcal{M}(X)$ and $\epsilon>0$ we have 
    \begin{align*}
        \epsilon (\mu \times \nu)(X^2\setminus \ball_\epsilon(x)^2)
        \leq (\delta_x\times \mu)(d) + (\delta_x\times \nu)(d). 
    \end{align*}
\end{lemma}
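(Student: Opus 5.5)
The plan is to reduce the claim to a pointwise Markov-type estimate and then integrate it. First interpret the quantities. $(\mu\times\nu)(X^2\setminus \ball_\epsilon(x)^2)$ is the product measure of the set of pairs $(y,y')$ with $d(x,y)>\epsilon$ or $d(x,y')>\epsilon$. The term $(\delta_x\times\mu)(d)$ equals $\int_X d(x,y)\dd\mu(y)$, and analogously for $\nu$.

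Next, use the union bound
\[
X^2\setminus \ball_\epsilon(x)^2 \subseteq \bigl((X\setminus \ball_\epsilon(x))\times X\bigr)\cup\bigl(X\times (X\setminus \ball_\epsilon(x))\bigr).
\]
Since $\mu$ and $\nu$ are probability measures, this gives
\[
(\mu\times\nu)(X^2\setminus \ball_\epsilon(x)^2)\leq \mu(X\setminus \ball_\epsilon(x))+\nu(X\setminus \ball_\epsilon(x)).
\]

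Finally, apply Markov's inequality to the nonnegative continuous function $d(x,\cdot)$. On $X\setminus \ball_\epsilon(x)$ we have $d(x,y)>\epsilon$, so
\[
\epsilon\,\mu(X\setminus \ball_\epsilon(x))\leq \int_{X\setminus \ball_\epsilon(x)} d(x,y)\dd\mu(y)\leq \int_X d(x,y)\dd\mu(y)=(\delta_x\times\mu)(d),
\]
and the same bound holds for $\nu$. Multiplying the union bound by $\epsilon$ and inserting these two estimates yields the claim.

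There is no real obstacle. The only point needing care is identifying $(\delta_x\times\mu)(d)$ with $\int d(x,\cdot)\dd\mu$, which follows from Fubini since $d$ is bounded and continuous.
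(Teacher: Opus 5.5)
Your proof is correct and is essentially the paper's argument. The paper likewise bounds $\epsilon\mu(X\setminus \ball_\epsilon(x))\leq \int_{X\setminus \ball_\epsilon(x)} d(x,x')\dd\mu(x')\leq(\delta_x\times\mu)(d)$ (and the same for $\nu$), and then uses the decomposition $X^2\setminus \ball_\epsilon(x)^2=((X\setminus \ball_\epsilon(x))\times X)\cup(X\times (X\setminus \ball_\epsilon(x)))$ with the union bound.
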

\begin{proof}
    Denote $B:=\ball_\epsilon(x)$. 
    We have 
    \begin{align*}
        \epsilon\mu(X\setminus B)
        =\int_{X\setminus B} \epsilon \dd \mu
        \leq \int_{X\setminus B}d(x,x')\dd \mu(x')
        \leq (\delta_x\times \mu)(d). 
    \end{align*}
    Similarly, we observe $\epsilon\nu(X\setminus B)\leq (\delta_x\times \nu)(d)$. 
    It follows that 
    \begin{align*}
        \epsilon(\mu\times \nu)(X^2\setminus B^2)
        &=\epsilon(\mu\times \nu)\left(((X\setminus B)\times X)\cup (X\times (X\setminus B))\right)\\
        &\leq \epsilon\mu(X\setminus B) +
        \epsilon\nu(X\setminus B) \\
        &\leq (\delta_x\times \mu)(d) + (\delta_x\times \nu)(d). 
        \qedhere
    \end{align*}
\end{proof}

Note that the map $X\ni x\mapsto (T^*\diam)(\ball_\epsilon(x))\in [0,\diam(X)]$ is continuous and hence integrable with respect to any $\mu\in \mathcal{M}(X)$. 

\begin{lemma}
    \label{lem:meanDiameterPreparationsII}
    Consider $\mu,\nu,\nu'\in \mathcal{M}(X)$. 
    For $\epsilon>0$ we have 
    \begin{align*}
        (T^*d)(\nu,\nu')
        \leq \int_X (T^*\diam)(\ball_\epsilon(x))\dd \mu(x)
        + \diam(X)\frac{d(\nu,\mu)+d(\mu,\nu')}{\epsilon}. 
    \end{align*}
\end{lemma}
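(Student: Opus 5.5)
We couple $\nu$ and $\nu'$ through $\mu$ and split the resulting integral according to whether both points lie in a common small ball. Let $\kappa\in\Pi(\mu,\nu)$ and $\kappa'\in\Pi(\mu,\nu')$ be optimal couplings, so that $\kappa(d)=d(\mu,\nu)$ and $\kappa'(d)=d(\mu,\nu')$. Let $\nu_{(\cdot)}$ and $\nu'_{(\cdot)}$ be the disintegrations of $\nu$ and $\nu'$ induced by $\kappa$ and $\kappa'$. Then $\nu=\int_X\nu_x\dd\mu(x)$ and $\nu'=\int_X\nu'_x\dd\mu(x)$. Moreover $\kappa(d)=\int_X(\delta_x\times\nu_x)(d)\dd\mu(x)$, and similarly for $\kappa'$. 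We define
\[
\gamma:=\int_X(\nu_x\times\nu'_x)\dd\mu(x),
\]
which is a Borel probability measure on $X^2$ by Lemma~\ref{lem:markovKernelmeasurabilityIntegrant}. Its marginals are $\nu$ and $\nu'$, so $\gamma\in\Pi(\nu,\nu')$.

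By Lemma~\ref{lem:MX_meanPseudometricPreparationsI},
\[
(T^*d)(\nu,\nu')\leq\gamma(T^*d)=\int_X\int_{X^2}(T^*d)\dd(\nu_x\times\nu'_x)\dd\mu(x).
\]
For fixed $x$ we split $X^2$ into $\ball_\epsilon(x)^2$ and its complement. On $\ball_\epsilon(x)^2$ we have $d(Ty,Ty')\leq\diam(T\ball_\epsilon(x))=(T^*\diam)(\ball_\epsilon(x))$. On the complement we bound $T^*d$ by $\diam(X)$. This gives, for each $x$,
\[
\int(T^*d)\dd(\nu_x\times\nu'_x)\leq(T^*\diam)(\ball_\epsilon(x))+\diam(X)\,(\nu_x\times\nu'_x)(X^2\setminus\ball_\epsilon(x)^2).
\]
Lemma~\ref{lem:meanDiameterPreparationsI} bounds the last mass by $\frac1\epsilon\bigl((\delta_x\times\nu_x)(d)+(\delta_x\times\nu'_x)(d)\bigr)$.

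Integrating over $\mu$ turns these two terms into $\kappa(d)=d(\nu,\mu)$ and $\kappa'(d)=d(\mu,\nu')$, which yields the claimed inequality. The main technical point is measurability. We need the integrands in $x$ to be measurable and $\gamma$ to be a well-defined coupling. This follows from Lemma~\ref{lem:markovKernelmeasurabilityIntegrant}, since $x\mapsto\nu_x\times\nu'_x$ is a Markov kernel: it is measurable because products of measures are weak*-continuous. The map $x\mapsto(T^*\diam)(\ball_\epsilon(x))$ is continuous, as noted before the lemma. The bound $\mathbf 1_{X^2\setminus\ball_\epsilon(x)^2}(y,y')$ is measurable jointly in $(x,y,y')$ because the set $\{(x,y,y') : d(x,y)\le\epsilon,\ d(x,y')\le\epsilon\}$ is closed.
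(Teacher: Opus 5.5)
Your proposal is correct and follows the paper's proof essentially step by step: the same glued coupling $\int_X(\nu_x\times\nu'_x)\dd\mu(x)\in\Pi(\nu,\nu')$ built from optimal couplings through $\mu$, the same split of $X^2$ into $\ball_\epsilon(x)^2$ and its complement, and the same use of Lemma~\ref{lem:meanDiameterPreparationsI} to turn the remaining mass into $d(\nu,\mu)+d(\mu,\nu')$. One small caveat, which you inherited from the remark before the lemma: $x\mapsto(T^*\diam)(\ball_\epsilon(x))$ is in general only upper semicontinuous, not continuous (for example, with $X=[0,1]\cup\{2\}$, $T=\mathrm{id}$ and $\epsilon=1$ the value jumps from $1$ to $2$ at $x=1$). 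Upper semicontinuity still gives the Borel measurability your argument needs.
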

\begin{proof}
    Consider $\kappa\in \Pi(\mu,\nu)$ with 
    $d(\mu,\nu)=\kappa(d)$. 
    Let $\nu_{(\cdot)}\colon X\to \mathcal{M}(X)$ be a disintegration of $\nu$ induced by $\kappa$. 
    Similarly, we choose $\kappa'\in \Pi(\mu,\nu')$ with $d(\mu,\nu')=\kappa(d)$ and a disintegration $\nu_{(\cdot)}'$ of $\nu'$ induced by $\kappa'$. 
    Consider the Markov kernel $\rho_{(\cdot)}\colon X\to \mathcal{M}(X^2)$ defined by $\rho_x:=\nu_x\times \nu_x'$ and denote $\rho:=\int_X \rho_x\dd\mu(x)$. 
    Note that $T\rho\in \Pi(T\nu,T\nu')$. 
    Thus, we have 
    \begin{align*}
        (T^*d)(\nu,\nu')
        &=d(T\nu,T\nu')
        \leq T\rho(d)
        =\rho(T^*d)
        =\int_X\int_{X^2}(T^*d)\dd \rho_x \dd\mu(x). 
    \end{align*}    
    Denote $A$ for the set of all $(x,x_1,x_2)\in X^3$ with $d(x,x_i)\leq \epsilon$ for $i=1,2$.
    Clearly, $A$ is closed and hence the characteristic function $\mathbf{1}_A\colon X^3\to \{0,1\}$ is measurable. 
    Furthermore, we have $A=\bigcup_{x\in X}\{x\}\times \ball_\epsilon(x)^2$. 
    The measurability of the integrants in the following computations thus follows from Lemma \ref{lem:markovKernelmeasurabilityIntegrant}.
    For $B\subseteq X$ we have 
    $T(B^2)=(TB)^2=:TB^2$ and compute
    \begin{align*}
        \int_X\int_{\ball_\epsilon(x)^2}(T^*d)\dd\rho_x\dd\mu(x)
        &=\int_X\int_{\ball_\epsilon(x)^2}d(T \overline{x})\dd\rho_x(\overline{x})\dd\mu(x)\\
        &=\int_X\int_{T\ball_\epsilon(x)^2}d(\overline{x})\dd\rho_x(\overline{x})\dd\mu(x)\\
        &\leq \int_X \diam(T\ball_\epsilon(x))\dd\mu(x)\\
        &=\int_X (T^*\diam)(\ball_\epsilon(x))\dd\mu(x).
    \end{align*}
    From $T^*d\leq \diam(X)$ and Lemma \ref{lem:meanDiameterPreparationsI} we observe
    \begin{align*}
        \int_X\int_{X^2\setminus \ball_\epsilon(x)^2}(T^*d)\dd\rho_x\dd\mu(x)
        &\leq \int_X\int_{X^2\setminus \ball_\epsilon(x)^2}\diam(X)\dd\rho_x\dd\mu(x)\\
        &= \diam(X) \int_X\rho_x(X^2\setminus \ball_\epsilon(x)^2)\dd\mu(x)\\
        &\leq \diam(X)\int_X \frac{1}{\epsilon}(\delta_x\times \nu_x(d) + \delta_x\times \nu_x'(d))\dd \mu(x)\\
        &= \diam(X) \frac{\kappa(d)+\kappa'(d)}{\epsilon}\\
        &= \diam(X) \frac{d(\nu,\mu)+d(\mu,\nu')}{\epsilon}. 
    \end{align*}    
    Combining our observations we conclude the statement.         
\end{proof}

\begin{lemma}
\label{lem:meanDiameterPreparationsIII}
    For $\mu\in \mathcal{M}(X)$ and $\epsilon,\delta>0$ we have 
    \begin{align*}
        (T^*\diam)(\ball_\delta(\mu))
        \leq \int_X (T^*\diam)(\ball_\epsilon(x))\dd \mu(x) + \frac{2\delta}{\epsilon}\diam(X).
    \end{align*}
\end{lemma}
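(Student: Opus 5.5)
Here $\ball_\delta(\mu)$ is the closed Wasserstein ball $\{\nu\in\myper(X);\, d(\mu,\nu)\leq\delta\}$. By the definition of $T^*\diam$ on $\hyper(\myper(X))$ we have
\[
(T^*\diam)(\ball_\delta(\mu))=\diam(T\ball_\delta(\mu))=\sup_{\nu,\nu'\in \ball_\delta(\mu)} d(T\nu,T\nu').
\]
The plan is therefore to bound $d(T\nu,T\nu')=(T^*d)(\nu,\nu')$ uniformly over all pairs $\nu,\nu'$ in this ball, and then take the supremum.

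Fix $\nu,\nu'\in\ball_\delta(\mu)$ and apply Lemma~\ref{lem:meanDiameterPreparationsII} with the given $\epsilon$. This gives
\[
(T^*d)(\nu,\nu')\leq \int_X (T^*\diam)(\ball_\epsilon(x))\dd\mu(x) + \diam(X)\frac{d(\nu,\mu)+d(\mu,\nu')}{\epsilon}.
\]
Since $d(\nu,\mu)\leq\delta$ and $d(\mu,\nu')\leq\delta$, the second term is at most $\tfrac{2\delta}{\epsilon}\diam(X)$. The right-hand side no longer depends on $\nu$ or $\nu'$, so taking the supremum over all $\nu,\nu'\in\ball_\delta(\mu)$ yields exactly the claimed inequality.

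There is essentially no obstacle here, since all the real work (building a joint coupling through the disintegrations and applying the Chebyshev-type estimate of Lemma~\ref{lem:meanDiameterPreparationsI}) was already done in Lemma~\ref{lem:meanDiameterPreparationsII}. The only points to check are these:
\begin{itemize}
\item the diameter of the image set $T\ball_\delta(\mu)$ is indeed the supremum of $d(T\nu,T\nu')$ over pairs in the ball;
\item the integrand $x\mapsto (T^*\diam)(\ball_\epsilon(x))$ is integrable, which is noted in the paper just before Lemma~\ref{lem:meanDiameterPreparationsII}.
\end{itemize}
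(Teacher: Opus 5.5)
Your proof is correct and follows the paper's argument: you apply Lemma~\ref{lem:meanDiameterPreparationsII} to a pair $\nu,\nu'\in\ball_\delta(\mu)$ and use $d(\mu,\nu)+d(\mu,\nu')\leq 2\delta$. The only cosmetic difference is that the paper picks a pair attaining $\diam(T\ball_\delta(\mu))$, which exists by compactness, whereas you take the supremum over all pairs.
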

\begin{proof}
    Consider $\nu,\nu'\in \ball_\delta(\mu)$ with 
    $d(T\nu,T\nu')
        =\diam(T\ball_\delta(\mu))$. 
    We have 
     \begin{align*}
        (T^*\diam)(\ball_\delta(\mu))
        &=\diam(T\ball_\delta(\mu))
        =d(T\nu,T\nu')
        =(T^*d)(\nu,\nu'). 
    \end{align*}
    From 
        $d(\mu,\nu)+d(\mu,\nu')\leq 2\delta$
    and Lemma \ref{lem:meanDiameterPreparationsII} we thus observe 
    \begin{align*}
        (T^*\diam)(\ball_\delta(\mu))
        &\leq \int_X (T^*\diam)(\ball_\epsilon(x))\dd \mu(x) + \frac{2\delta}{\epsilon}\diam(X). 
        \qedhere
    \end{align*}
\end{proof}

\begin{lemma}
\label{lem:meanDiameterPreparationsIV}
    For $\mu\in \mathcal{M}(X)$ and $\epsilon,\delta>0$ we have 
    \begin{align*}
        \Diam_\mathcal{F}(\ball_\delta(\mu))
        \leq \int_X \Diam_\mathcal{F}(\ball_\epsilon(x))\dd \mu(x) + \frac{2\delta}{\epsilon}\diam(X).
    \end{align*}
\end{lemma}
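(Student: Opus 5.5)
The plan is to mimic the passage from Lemma~\ref{lem:MX_meanPseudometricPreparationsI} to Lemma~\ref{lem:MX_meanPseudometricPreparationsII}: first average the one-step estimate of Lemma~\ref{lem:meanDiameterPreparationsIII} over the Cesàro means, then pass to the limsup using Fatou's lemma.

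First, for every $k\in\mathbb{N}_0$ I apply Lemma~\ref{lem:meanDiameterPreparationsIII} to the dynamical system $(X,T^k)$. This is legitimate because $T^k$ is again a continuous surjection, and the Wasserstein metric and the closed balls in $\myper(X)$ do not depend on the dynamics. For $k=0$ the inequality also holds: $\diam(\ball_\delta(\mu))\leq 2\delta$, and $2\delta\leq \frac{2\delta}{\epsilon}\diam(X)$ whenever $\epsilon\leq\diam(X)$. If $\epsilon>\diam(X)$ we have $\ball_\epsilon(x)=X$, and the claim is trivial since the right-hand side is then at least $\Diam_\mathcal{F}(X)$. Alternatively, the proof of Lemma~\ref{lem:meanDiameterPreparationsIII} goes through verbatim with $T$ replaced by the identity. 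This yields
\[
((T^k)^*\diam)(\ball_\delta(\mu))\leq \int_X ((T^k)^*\diam)(\ball_\epsilon(x))\dd\mu(x)+\frac{2\delta}{\epsilon}\diam(X).
\]
Averaging over $k=0,\dots,n-1$ and using linearity of the integral gives
\[
(F_n^*\diam)(\ball_\delta(\mu))\leq \int_X (F_n^*\diam)(\ball_\epsilon(x))\dd\mu(x)+\frac{2\delta}{\epsilon}\diam(X).
\]

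Next I take $\limsup_{n\to\infty}$ on both sides. The left side becomes $\Diam_\mathcal{F}(\ball_\delta(\mu))$ by definition. On the right side the integrands $x\mapsto (F_n^*\diam)(\ball_\epsilon(x))$ are measurable, by the continuity noted before Lemma~\ref{lem:meanDiameterPreparationsII} applied to each $T^k$. They are also uniformly bounded by $\diam(X)$. The reverse Fatou lemma therefore gives
\[
\limsup_n\int_X (F_n^*\diam)(\ball_\epsilon(x))\dd\mu(x)\leq\int_X \limsup_n (F_n^*\diam)(\ball_\epsilon(x))\dd\mu(x)=\int_X\Diam_\mathcal{F}(\ball_\epsilon(x))\dd\mu(x).
\]
As a pointwise limsup of measurable functions, $x\mapsto\Diam_\mathcal{F}(\ball_\epsilon(x))$ is measurable, so the last integral is well defined. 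Combining the two displays proves the claim.

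The only delicate points are the justification of Fatou's lemma, which is handled by the uniform bound, and the $k=0$ term. I expect the $k=0$ term to be the main nuisance, and it is resolved by the remark above.
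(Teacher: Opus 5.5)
Your proposal is correct and follows the paper's proof: apply Lemma~\ref{lem:meanDiameterPreparationsIII} to $(X,T^k)$, average over $k$, and pass to the limit superior via (reverse) Fatou using the uniform bound $\diam(X)$. The $k=0$ case needs no separate treatment, since $(X,\mathrm{id})$ is itself a dynamical system. One small caveat, inherited from the paper's remark before Lemma~\ref{lem:meanDiameterPreparationsII}: $x\mapsto ((T^k)^*\diam)(\ball_\epsilon(x))$ is in general only upper semicontinuous, not continuous (e.g.\ in $[0,1]\cup\{2\}$ with $\epsilon=1$ the diameter of $\ball_1(x)$ jumps from $1$ to $2$ at $x=1$), but this still gives the measurability you need.
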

\begin{proof}
    From Lemma \ref{lem:meanDiameterPreparationsIII} applied to the dynamical system $(X,T^k)$ we observe that 
    \begin{align*}
        ((T^k)^*\diam)(\ball_\delta(\mu))
        \leq \int_X ((T^k)^*\diam)(\ball_\epsilon(x))\dd \mu(x) + \frac{2\delta}{\epsilon}\diam(X)
    \end{align*}
    and hence     
    \begin{align*}
        (F_n^*\diam)(\ball_\delta(\mu))
        \leq \int_X (F_n^*\diam)(\ball_\epsilon(x))\dd \mu(x) + \frac{2\delta}{\epsilon}\diam(X)
    \end{align*}
    The statement follows from taking the limit superior and Fatou's lemma. 
\end{proof}

\begin{proposition}
\label{pro:supportAndDiamMeanEquicontinuousPoints}
    Let $\mu\in \mathcal{M}(X)$. 
    If $\supp(\mu)$ consists of $\mathcal{F}$-diam-mean equicontinuity points, then $\mu$ is a $\mathcal{F}$-diam-mean equicontinuity point of $(\mathcal{M}(X),T)$.
\end{proposition}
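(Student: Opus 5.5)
The plan is to combine Lemma~\ref{lem:meanDiameterPreparationsIV} with the uniform version of diam-mean equicontinuity on the compact set $\supp(\mu)$ given by Lemma~\ref{lem:deltaIndependenceOfNotions}(iii). Fix $\epsilon>0$. Since $\supp(\mu)\in\hyper(X)$ consists of $\mathcal{F}$-diam-mean equicontinuity points, Lemma~\ref{lem:deltaIndependenceOfNotions}(iii) yields $\eta>0$ such that $\Diam_\mathcal{F}(\ball_\eta(x))<\epsilon/2$ for all $x\in\supp(\mu)$.

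Next I apply Lemma~\ref{lem:meanDiameterPreparationsIV} with $\eta$ in the role of its $\epsilon$ and a $\delta>0$ still to be chosen:
\begin{align*}
    \Diam_\mathcal{F}(\ball_\delta(\mu))
    \leq \int_X \Diam_\mathcal{F}(\ball_\eta(x))\dd\mu(x) + \frac{2\delta}{\eta}\diam(X).
\end{align*}
The integral only sees $\supp(\mu)$, which has full $\mu$-measure, so it is at most $\epsilon/2$. Choosing $\delta:=\eta\epsilon/(8\diam(X))$ (any $\delta$ works if $\diam(X)=0$) bounds the second term by $\epsilon/4$. Hence $\Diam_\mathcal{F}(\ball_\delta(\mu))<\epsilon$, which is the claim.

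The only delicate point is measurability of $x\mapsto\Diam_\mathcal{F}(\ball_\eta(x))$, which is needed for the integral to make sense. This was already used implicitly in Lemma~\ref{lem:meanDiameterPreparationsIV}. It is justified because each $x\mapsto (F_n^*\diam)(\ball_\eta(x))$ is continuous, as noted before Lemma~\ref{lem:meanDiameterPreparationsII}, so the limit superior of these maps is Borel. Even without this, one could bound the integrand by $\epsilon/2$ on $\supp(\mu)$ directly inside the Fatou step of Lemma~\ref{lem:meanDiameterPreparationsIV}. I therefore expect no real obstacle beyond this bookkeeping.
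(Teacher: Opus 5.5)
Your proposal is correct and follows essentially the paper's proof: a uniform radius on $\supp(\mu)$ from Lemma~\ref{lem:deltaIndependenceOfNotions}(iii), then Lemma~\ref{lem:meanDiameterPreparationsIV} with $\delta$ a multiple of $\epsilon\eta/\diam(X)$ (the paper takes $\delta=\epsilon\delta'/(4\diam(X))$ and bounds the sum by $\epsilon/2+\epsilon/2$). One small caveat on your measurability remark: $x\mapsto\diam(T^k\ball_\eta(x))$ need not be continuous, because closed balls can jump (e.g.\ $X=\{0,1\}\cup\{-1/n;\,n\in\mathbb{N}\}\subseteq\mathbb{R}$ with $T=\mathrm{id}$ and $\eta=1$, near $x=0$), but a compactness argument shows it is upper semicontinuous, so the Ces\`aro averages and their limit superior are still Borel.
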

\begin{proof}
    Let $\epsilon>0$. 
    From Lemma \ref{lem:deltaIndependenceOfNotions} we observe that there exists $\delta'>0$ such that for all $x\in \supp(\mu)$ we have $\Diam_\mathcal{F}(\ball_{\delta'}(x))<\epsilon/2$. 
    Denote $\delta:=\epsilon \delta'/(4\diam(X))$. 
    It follows from Lemma \ref{lem:meanDiameterPreparationsIV} that
    \begin{align*}
        \Diam_\mathcal{F}(\ball_{\delta}(\mu))
        &\leq \int_X \Diam_\mathcal{F}(\ball_{\delta'}(x))\dd\mu(x)
        + 2\diam(X)\frac{\delta}{\delta'}\\
        &\leq \int_X \frac{\epsilon}{2}\dd\mu(x) + \frac{\epsilon}{2}
        \leq \epsilon.
    \end{align*}
    This shows that $\mu$ is a $\mathcal{F}$-diam-mean equicontinuous point of $(\mathcal{M}(X),G)$.
\end{proof}

\begin{proof}[Proof of Theorem \ref{the:intro_Main}(\ref{intro_Main_M(X)_diammeanEq}):]
    Whenever $(X,T)$ is diam-mean equicontinuous it follows from Proposition \ref{pro:supportAndDiamMeanEquicontinuousPoints} that $(\myper(X),T)$ is diam-mean equicontinuous. 
    The converse follows, since diam-mean equicontinuity is inherited by subsystems and since $(X,T)$ can be identified with a subsystem of $(\myper(X),T)$ via $x\mapsto\delta_x$. 
\end{proof}

\section{Induced dynamics on closed subsets}
\label{sec:inducedDynamicsSubsets}
As above let $(X,T)$ be a dynamical system and denote $d$ for the metric on $X$. 
Note that $x\mapsto \{x\}$ gives an isometric and equivariant embedding of $(X,T)$ into $(\hyper(X),T)$. 
As in the previous discussion we use this fact to simplify our notation. 
We denote 
$D_\mathcal{F}$ for the $\mathcal{F}$-mean pseudometric, 
$\meanorbPM_\mathcal{F}$ for the $\mathcal{F}$-mean-orbital pseudometric, and 
$\Diam_\mathcal{F}$ for the $\mathcal{F}$-mean diameter 
of both, $(\hyper(X),T)$ and $(X,T)$. 
It was shown in \cite[Corollary 4.9]{garcia2019when} that $(\hyper(X),T)$ is diam-mean equicontinuous, iff it is mean equicontinuous. We next extend this result by proving that $(\hyper(X),T)$ is mean equicontinuous, iff it is weakly-mean equicontinuous.

\subsection{Weak mean equicontinuity of $\hyper(X)$}
We first show that the weak-mean equicontinuity of $(\hyper(X),T)$ implies the diam-mean equicontinuity of $(X,T)$. 
For this we use the following notion. 

\begin{definition}
    A subset $M\subseteq X$ is called \emph{$\epsilon$-spanning} if for any $x\in X$ there exists $y\in M$ with $d(x,y)\leq \epsilon$, i.e.\ if $\ball_\epsilon[M]=X$. 
    Since $X$ is compact it allows for a finite $\epsilon$-spanning subset. 
    We denote $\spa(\epsilon)$ for the minimal cardinality of an $\epsilon$-spanning subset of $X$. 
\end{definition}

\begin{lemma}
\label{lem:HX_PseudometricControlPreparationsI}
    For $\epsilon>0$ there exists $\phi\in C(\hyper(X))$ with $\lip{\phi}\leq 1$ such that for all $A,B\in \hyper(X)$ with $A\subseteq B$ we have 
    \begin{align*}
        d(A,B)\leq \spa(\epsilon)(\delta_A(\phi)-\delta_B(\phi))+2\epsilon. 
    \end{align*}
\end{lemma}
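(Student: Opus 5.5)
Fix a minimal $\epsilon$-spanning set $M=\{y_1,\dots,y_N\}$ with $N=\spa(\epsilon)$. Note that $\delta_A(\phi)=\phi(A)$, so the claim reads $d(A,B)\leq N(\phi(A)-\phi(B))+2\epsilon$ for $A\subseteq B$. The natural choice is
\[
\phi(A):=\frac{1}{N}\sum_{i=1}^N d(y_i,A).
\]
Each $A\mapsto d(y_i,A)$ is continuous with $\lip{d(y_i,\cdot)}=1$ by Remark~\ref{rem:lipschitzContinuity}. An average of $1$-Lipschitz functions is $1$-Lipschitz, so $\phi\in C(\hyper(X))$ and $\lip{\phi}\leq 1$.

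For the inequality, take $A\subseteq B$. Then $d(y_i,B)\leq d(y_i,A)$ for every $i$, so each summand of
\[
N(\phi(A)-\phi(B))=\sum_i \bigl(d(y_i,A)-d(y_i,B)\bigr)
\]
is nonnegative. Hence the whole sum dominates any single term $d(y_j,A)-d(y_j,B)$.

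Since $A\subseteq B$, Remark~\ref{rem:hausdorff_metric_formulas} gives $d(A,B)=\max_{x\in B}d(x,A)$. Pick $x\in B$ attaining this maximum, and pick $y_j\in M$ with $d(x,y_j)\leq\epsilon$. Because $d(\cdot,A)$ is $1$-Lipschitz,
\[
d(A,B)=d(x,A)\leq d(y_j,A)+\epsilon .
\]
Also $d(y_j,B)\leq d(y_j,x)\leq\epsilon$, since $x\in B$. Combining,
\[
d(A,B)\leq \bigl(d(y_j,A)-d(y_j,B)\bigr)+2\epsilon\leq N(\phi(A)-\phi(B))+2\epsilon .
\]

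There is no serious obstacle here. The only point needing care is the normalization by $1/N$: it keeps $\lip{\phi}\leq 1$, and it costs exactly the factor $\spa(\epsilon)$ in the inequality. The nonnegativity of all summands (from $A\subseteq B$) is what lets a single index $j$ be bounded by the full sum.
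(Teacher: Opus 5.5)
Your proof is correct and is essentially the paper's argument: the same $\phi=\frac{1}{\spa(\epsilon)}\sum_{y\in M}d(y,\cdot)$, the same Lipschitz bound, and the same use of the nonnegativity of the summands (from $A\subseteq B$) to dominate a single term by the full sum. The only cosmetic difference is how you pass from $X$ to the spanning set. The paper uses $d(A,B)=\sup_{x\in X}(d(x,A)-d(x,B))$ together with the $2$-Lipschitz continuity of $x\mapsto d(x,A)-d(x,B)$, whereas you use the one-sided formula $d(A,B)=\max_{x\in B}d(x,A)$ together with $d(y_j,B)\leq\epsilon$; both yield the same $2\epsilon$.
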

\begin{proof}
    Consider an $\epsilon$-spanning subset $E\subseteq X$ of cardinality $\spa(\epsilon)$. 
    For $x\in X$ we consider $\phi_x\colon \hyper(X)\to [0,\diam(X)]$ defined by $A\mapsto d(x,A)$. 
    We denote $\phi:=1/\spa(\epsilon) \sum_{x\in X}\phi_x$. 
    Recall from Remark \ref{rem:lipschitzContinuity} that $\lip{\phi_x}= 1$ and hence that $\lip{\phi}\leq 1$. 

    Consider $A,B\in \hyper(X)$ with $A\subseteq B$ and note that $d(x,A)\geq d(x,B)$ for all $x\in X$. 
    From Remark \ref{rem:lipschitzContinuity} we know that $d(\cdot, A), d(\cdot, B)\colon X\to [0,\infty)$ satisfy 
    $\lip{d(\cdot, A)}\leq 1$ and $ \lip{d(\cdot, B)}\leq 1$.
    From Remark \ref{rem:hausdorff_metric_formulas} and since $E$ is $\epsilon$-spanning we thus observe  
    \begin{align*}
        d(A,B)
        &=\sup_{x\in X} d(x,A)-d(x,B)\\
        &\leq 2\epsilon +\sup_{x\in E} (d(x,A)-d(x,B))\\
        &\leq 2\epsilon+\sum_{x\in E} (d(x,A)-d(x,B))\\
        &=2\epsilon+\sum_{x\in E} (\phi_x(A)-\phi_x(B))\\
        &=2\epsilon +\spa(\epsilon) (\phi(A)-\phi(B))\\
        &=2\epsilon +\spa(\epsilon)(\delta_A(\phi)-\delta_B(\phi)). 
    \qedhere
    \end{align*}
\end{proof}

\begin{lemma}
\label{lem:hyperWeylVsMeanOrb}
    Let $(X,T)$ be a dynamical system. 
    For $A,B\in \hyper(X)$ with $A\subseteq B$ and $\epsilon>0$ we have 
    \begin{align*}
        D_\mathcal{F}(A,B)\leq \spa(\epsilon)\meanorbPM_\mathcal{F}(A,B)+2\epsilon.
    \end{align*}
\end{lemma}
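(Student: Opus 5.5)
We apply Lemma~\ref{lem:HX_PseudometricControlPreparationsI} along orbits and average. The key observation is that the inclusion $A\subseteq B$ is preserved by $T$: $T^kA\subseteq T^kB$ for all $k\in\mathbb{N}_0$. Fix $\epsilon>0$ and let $\phi\in C(\hyper(X))$ with $\lip{\phi}\leq 1$ be the function provided by Lemma~\ref{lem:HX_PseudometricControlPreparationsI}. Applying the lemma to the pair $T^kA\subseteq T^kB$ gives
\[
d(T^kA,T^kB)\leq \spa(\epsilon)\bigl(\phi(T^kA)-\phi(T^kB)\bigr)+2\epsilon .
\]
Averaging over $k=0,\dots,n-1$, we obtain
\[
(F_n^*d)(A,B)\leq \spa(\epsilon)\bigl((F_n)_*\delta_A(\phi)-(F_n)_*\delta_B(\phi)\bigr)+2\epsilon .
\]
Here we use $(F_n)_*\delta_A(\phi)=\frac1n\sum_k\phi(T^kA)$, which is the identity $(F_n)_*\mu(f)=\mu(F_n^*f)$ from the preliminaries.

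Next we bound the bracket by a Wasserstein distance. Since $\lip{\phi}\leq 1$, Remark~\ref{rem:preliminaries_WassersteinLipschitz} applied on the compact metric space $(\hyper(X),d)$ gives
\[
(F_n)_*\delta_A(\phi)-(F_n)_*\delta_B(\phi)\leq d\bigl((F_n)_*\delta_A,(F_n)_*\delta_B\bigr),
\]
where $d$ on the right denotes the Wasserstein metric on $\myper(\hyper(X))$. Combining the two estimates yields
\[
(F_n^*d)(A,B)\leq \spa(\epsilon)\, d\bigl((F_n)_*\delta_A,(F_n)_*\delta_B\bigr)+2\epsilon .
\]
Taking $\limsup_{n\to\infty}$ on both sides, and using that $\spa(\epsilon)$ is a constant independent of $n$, gives $D_\mathcal{F}(A,B)\leq \spa(\epsilon)\meanorbPM_\mathcal{F}(A,B)+2\epsilon$. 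This is exactly the definitions of $D_\mathcal{F}$ and $\meanorbPM_\mathcal{F}$ for the system $(\hyper(X),T)$.

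The only delicate point is that the lemma needs $A\subseteq B$ at every time step, and this holds because $T^k$ preserves inclusions. A minor side remark: the proof of Lemma~\ref{lem:HX_PseudometricControlPreparationsI} should define $\phi$ as the average over the spanning set $E$ (not over all of $X$), but its statement is all we use here.
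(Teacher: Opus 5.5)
Your proposal is correct and follows the paper's proof: apply Lemma~\ref{lem:HX_PseudometricControlPreparationsI} to $T^kA\subseteq T^kB$, average to get the Ces\`aro bound, dominate the bracket by the Wasserstein distance using $\lip{\phi}\leq 1$, and take the limit superior. Citing Remark~\ref{rem:preliminaries_WassersteinLipschitz} for the duality step, rather than Remark~\ref{rem:hausdorff_metric_formulas} as the paper's text does, is right, and so is your note that $\phi$ should be averaged over $E$ rather than $X$; both fix small slips in the paper.
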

\begin{proof}
    Consider $\phi$ as in Lemma \ref{lem:HX_PseudometricControlPreparationsI}. 
    For $k\in \mathbb{N}_0$ we have $T^kA\subseteq T^kB$ and hence
    \begin{align*}
        ((T^k)^*d)(A,B)
        =d(T^kA,T^kB)
        &\leq \spa(\epsilon)(\delta_{T^kA}(\phi)-\delta_{T^kB}(\phi))+2\epsilon\\
        &= \spa(\epsilon)(T^k\delta_{A}(\phi)-T^k\delta_{B}(\phi))+2\epsilon.
    \end{align*}    
    For $n\in \mathbb{N}$ it follows that     
    \begin{align*}
        (F_n^*d)(A,B)\leq \spa(\epsilon)((F_n)_*\delta_A(\phi)-(F_n)_*\delta_B(\phi))+2\epsilon. 
    \end{align*}
    From $\lip{\phi}\leq 1$ and Remark \ref{rem:hausdorff_metric_formulas} we thus observe 
    \begin{align*}
        (F_n^*d)(A,B)\leq \spa(\epsilon)d((F_n)_*\delta_A,(F_n)_*\delta_B)+2\epsilon. 
    \end{align*}
    Taking the limit superior along $n$ we conclude the statement. 
\end{proof}

\begin{lemma}
    \label{lem:meanDiameterControlAtPointI}
    For $x\in X$ and $A\in \hyper(X)$ we have 
    $(T^*\diam)(A)\leq 2(T^*d)(\{x\},A).$
\end{lemma}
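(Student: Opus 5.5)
The plan is to unwind both sides into statements about the single closed set $TA$ and then apply the triangle inequality. By definition $(T^*\diam)(A)=\diam(TA)$, and since the image map on $\hyper(X)$ is $A\mapsto TA$ with $T\{x\}=\{Tx\}$, we have $(T^*d)(\{x\},A)=d(\{Tx\},TA)$, the Hausdorff distance. So, writing $y:=Tx$ and $B:=TA\in\hyper(X)$, it suffices to prove
\[
\diam(B)\leq 2\,d(\{y\},B)\quad\text{for all } y\in X \text{ and } B\in\hyper(X).
\]

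First compute the Hausdorff distance between a singleton and a closed set using Remark~\ref{rem:hausdorff_metric_formulas}. It equals the larger of $d(y,B)$ and $\max_{b\in B}d(b,\{y\})$. Since $d(y,B)\leq d(y,b)$ for every $b\in B$, this gives $d(\{y\},B)=\max_{b\in B}d(y,b)$.

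Next pick $b,b'\in B$ realizing $\diam(B)=d(b,b')$; such points exist because $B$ is compact. The triangle inequality then gives
\[
\diam(B)=d(b,b')\leq d(b,y)+d(y,b')\leq 2\max_{b''\in B}d(y,b'')=2\,d(\{y\},B).
\]
Substituting $y=Tx$ and $B=TA$ yields $(T^*\diam)(A)\leq 2(T^*d)(\{x\},A)$.

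I do not expect a real obstacle here. The only step needing care is the bookkeeping of the paper's identifications: $\{x\}$ is treated as a point of $\hyper(X)$, $T^*d$ is evaluated on the pair $(\{x\},A)$ in $\hyper(X)^2$, and one must check that $T\{x\}=\{Tx\}$. Once these are confirmed, the argument is exactly the triangle inequality above.
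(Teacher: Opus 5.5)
Your proof is correct and is essentially the paper's argument. The paper obtains the bound in one line from $\diam(T\{x\})=0$ and the Lipschitz estimate $\lip{\diam}\leq 2$ of Remark~\ref{rem:lipschitzContinuity}, giving $\diam(TA)=|\diam(TA)-\diam(T\{x\})|\leq 2d(T\{x\},TA)$. You instead verify this special case by hand: you identify $d(\{y\},B)=\max_{b\in B}d(y,b)$ and then apply the triangle inequality, which is self-contained and does not rely on the Lipschitz remark.
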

\begin{proof}
    Note that $\diam(T\{x\})=0$. We thus observe from $\lip{\diam}\leq 2$ that 
    \begin{align*}
        (T^*\diam)(A)
        &=|\diam(TA)-\diam(T\{x\})|\\
        &\leq 2d(T\{x\},TA)
        =2(T^*d)(\{x\},A).
        \qedhere
    \end{align*}
\end{proof}

\begin{lemma}
    \label{lem:meanDiameterControlAtPointII}
    For $x\in X$ and $A\in \hyper(X)$ we have 
    $\Diam_\mathcal{F}(A)\leq 2D_\mathcal{F}(\{x\},A)$. 
\end{lemma}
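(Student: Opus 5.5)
The plan mirrors the way Lemma \ref{lem:meanDiameterPreparationsIV} was obtained from Lemma \ref{lem:meanDiameterPreparationsIII}: we pass from the single-step estimate of Lemma \ref{lem:meanDiameterControlAtPointI} to Cesàro averages and then take the limit superior.

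First, for each $k\in\mathbb{N}_0$ we apply Lemma \ref{lem:meanDiameterControlAtPointI} to the dynamical system $(X,T^k)$, which is again a continuous surjection on $X$. For $k=0$ the inequality $\diam(A)\le 2d(\{x\},A)$ holds directly, since by Remark \ref{rem:lipschitzContinuity} we have $|\diam(A)-\diam(\{x\})|\le 2d(\{x\},A)$. This gives
\[((T^k)^*\diam)(A)\le 2((T^k)^*d)(\{x\},A)\]
for all $k\in\mathbb{N}_0$.

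Next we average over $k=0,\dots,n-1$, which yields $(F_n^*\diam)(A)\le 2(F_n^*d)(\{x\},A)$ for every $n\in\mathbb{N}$. Taking the limit superior as $n\to\infty$ on both sides then gives
\[\Diam_\mathcal{F}(A)=\limsup_{n\to\infty}(F_n^*\diam)(A)\le 2\limsup_{n\to\infty}(F_n^*d)(\{x\},A)=2D_\mathcal{F}(\{x\},A),\]
where the last equality is the definition of the $\mathcal{F}$-mean pseudometric on $(\hyper(X),T)$ evaluated at the pair $(\{x\},A)$.

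No step here is a real obstacle. The only points to check are that Lemma \ref{lem:meanDiameterControlAtPointI} legitimately applies to $T^k$ (it uses only the Lipschitz bound on $\diam$, which does not depend on the map) and that the limit superior is monotone, which it is. No Fatou-type argument is needed.
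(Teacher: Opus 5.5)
Your proposal is correct and matches the paper's proof: apply Lemma \ref{lem:meanDiameterControlAtPointI} to $(X,T^k)$ for each $k\in\mathbb{N}_0$, average over $k=0,\dots,n-1$, and take the limit superior. Your separate treatment of $k=0$ is harmless but not needed, since $T^0=\mathrm{id}$ is itself a continuous surjection and the lemma covers it directly.
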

\begin{proof}
    Applying Lemma \ref{lem:meanDiameterControlAtPointI} to $(X,T^k)$ for $k\in \mathbb{N}_0$ yields
    \begin{align*}
        ((T^k)^*\diam)(A)\leq 2((T^k)^*d)(\{x\},A)
    \end{align*}
    and hence for all $n\in \mathbb{N}$ we have 
    \begin{align*}
        (F_n^*\diam)(A)\leq 2(F_n^*d))(\{x\},A)
    \end{align*}
    Taking the limit superior along $n$ we conclude the statement. 
\end{proof}

\begin{proposition}
\label{pro:hyper_HXwmeImpliesXdme}
    Consider $x\in X$. 
    If $\{x\}$ is a $\mathcal{F}$-weak-mean equicontinuity point of $(\hyper(X),T)$, then $x$ is a $\mathcal{F}$-diam-mean equicontinuity point of $(X,T)$. 
\end{proposition}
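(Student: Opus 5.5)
Fix $\epsilon>0$; the goal is a $\delta>0$ with $\Diam_\mathcal{F}(\ball_\delta(x))<\epsilon$. The plan is to bound the mean diameter of the ball through the $\mathcal{F}$-mean pseudometric on $\hyper(X)$ between $\{x\}$ and the ball, then pass to the weaker mean-orbital pseudometric, which is exactly what weak-mean equicontinuity at $\{x\}$ controls.

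First, Lemma~\ref{lem:meanDiameterControlAtPointII} applied to $A:=\ball_\delta(x)\in\hyper(X)$ gives
\[
\Diam_\mathcal{F}(\ball_\delta(x))\leq 2D_\mathcal{F}(\{x\},\ball_\delta(x)).
\]
Second, since $\{x\}\subseteq \ball_\delta(x)$, Lemma~\ref{lem:hyperWeylVsMeanOrb} applies with any auxiliary parameter $\eta>0$:
\[
D_\mathcal{F}(\{x\},\ball_\delta(x))\leq \spa(\eta)\,\meanorbPM_\mathcal{F}(\{x\},\ball_\delta(x))+2\eta .
\]
Choose $\eta:=\epsilon/8$, so that $2\cdot 2\eta=\epsilon/2$. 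The constant $\spa(\eta)$ is a finite number depending only on $\epsilon$, and this is the step to get right: the order of choices. We fix $\eta$, and hence $\spa(\eta)$, before choosing $\delta$.

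Third, use the hypothesis. Since $\{x\}$ is an $\mathcal{F}$-weak-mean equicontinuity point of $(\hyper(X),T)$, there is $\delta>0$ such that every $B\in\hyper(X)$ with $d(\{x\},B)\leq\delta$ satisfies
\[
\meanorbPM_\mathcal{F}(\{x\},B)<\frac{\epsilon}{4\spa(\eta)}.
\]
If the definition only gives this for $d(\{x\},B)<\delta_0$ with strict inequality, take $\delta:=\delta_0/2$. In Hausdorff distance, $d(\{x\},\ball_\delta(x))=\max_{y\in\ball_\delta(x)}d(x,y)\leq\delta$, so $B=\ball_\delta(x)$ qualifies.

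Combining the three steps,
\[
\Diam_\mathcal{F}(\ball_\delta(x))\leq 2\Bigl(\spa(\eta)\cdot\frac{\epsilon}{4\spa(\eta)}+\frac{\epsilon}{4}\Bigr)=\epsilon ,
\]
with strict inequality from the strict bound in the third step. Since $\epsilon>0$ was arbitrary, $x$ is an $\mathcal{F}$-diam-mean equicontinuity point of $(X,T)$.
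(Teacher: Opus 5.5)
Your proposal is correct and follows the paper's proof essentially verbatim: it combines Lemma~\ref{lem:meanDiameterControlAtPointII} with Lemma~\ref{lem:hyperWeylVsMeanOrb} at parameter $\epsilon/8$, and chooses $\delta$ from weak-mean equicontinuity at $\{x\}$ so that $\meanorbPM_\mathcal{F}(\{x\},\ball_\delta(x))<\epsilon/(4\spa(\epsilon/8))$. Your remarks on fixing $\spa(\epsilon/8)$ before $\delta$ and on $d(\{x\},\ball_\delta(x))\leq\delta$ only make explicit what the paper leaves implicit.
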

\begin{proof}
    Let $\epsilon>0$. 
    There exists $\delta>0$ such that $\mathcal{W}_\mathcal{F}(\{x\},\ball_\delta(x))<\epsilon/(4\spa(\epsilon/8))$. 
    Since $\{x\}\subseteq \ball_\delta(x)$ we observe from the Lemmas \ref{lem:hyperWeylVsMeanOrb} and \ref{lem:meanDiameterControlAtPointII} that 
    \begin{align*}
        \Diam_\mathcal{F}(\ball_\delta(x))
        &\leq 
        2D_\mathcal{F}(\{x\},\ball_\delta(x))\\
        &\leq 2\spa\left(\tfrac{\epsilon}{8}\right)\mathcal{W}_\mathcal{F}(\{x\},\ball_\delta(x))+4\frac{\epsilon}{8}\\
        &< \frac{\epsilon}{2}+\frac{\epsilon}{2}=\epsilon. 
        \qedhere
    \end{align*}    
\end{proof}

\subsection{Diam-mean equicontinuity points in $\hyper(X)$}
Under the assumption that $A\in \hyper(X)$ consists of $\mathcal{F}$-diam-mean equicontinuity points we next show that $A$ is a $\mathcal{F}$-diam-mean equicontinuity point of $(\hyper(X),T)$ if and only if it is a $\mathcal{F}$-weak-mean equicontinuity point of $(\hyper(X),T)$.
For this we will use the following notions. 

\begin{definition}
    For a finite family $\mathcal{S}$ of subsets of $X$ we denote 
    $\bigcup \mathcal{S}:=\bigcup_{S\in \mathcal{S}}S$.
    Furthermore, we denote
    $\langle \mathcal{S}\rangle$ for the set of all $A\in \hyper(X)$ with $A\subseteq \bigcup \mathcal{S}$ and $A\cap S\neq \emptyset$ for all $S\in \mathcal{S}$. 
    For $S'\subseteq X$ and a finite family $\mathcal{S}$ of subsets of $X$ we abbreviate $\langle S',\mathcal{S}\rangle:=\langle \{S'\}\cup \mathcal{S}\rangle$. 
\end{definition}

\begin{remark}
    The \emph{Vietoris topology} is the topology on $\hyper(X)$ generated by the sets $\langle\mathcal{U}\rangle$, where $\mathcal{U}$ ranges over all finite families of open subsets of $X$. 
    Since $X$ is a compact metric space the Vietoris topology on $\hyper(X)$ is the topology generated by the Hausdorff metric \cite[Theorem 3.1]{illanes1999hyperspaces}. 
\end{remark}

\begin{remark}
    For a finite family $\mathcal{A}\subseteq \hyper(X)$ the set $\langle\mathcal{A}\rangle$ is a closed subset of $\hyper(X)$. 
\end{remark}



\begin{lemma}
\label{lem:hyper_diameterControlI}
    For $A\in \hyper(X)$, a finite family $\mathcal{B}\subseteq \hyper(X)$ and $C\in \langle A,\mathcal{B}\rangle$ we have 
    \begin{align*}
        d\left(\bigcup \mathcal{B}, C\cap \bigcup \mathcal{B}\right)
        \leq \max_{B\in \mathcal{B}} \diam(B). 
    \end{align*}
\end{lemma}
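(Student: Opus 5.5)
We bound the Hausdorff distance through the two one-sided terms of Remark~\ref{rem:hausdorff_metric_formulas}. Write $U:=\bigcup\mathcal{B}$ and $C':=C\cap U$. First we check that $C'$ is a legitimate element of $\hyper(X)$. It is closed because it is the intersection of two closed sets; note that $U$ is closed because $\mathcal{B}$ is a finite family of closed sets. It is non-empty because $C\in\langle A,\mathcal{B}\rangle$ forces $C\cap B\neq\emptyset$ for every $B\in\mathcal{B}$; we assume $\mathcal{B}$ is non-empty, as is implicit in the statement.

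By Remark~\ref{rem:hausdorff_metric_formulas} we have
\begin{align*}
    d(U,C')=\max\left\{\max_{y\in U}d(y,C'),\ \max_{y\in C'}d(y,U)\right\}.
\end{align*}
The second term vanishes, since $C'\subseteq U$.

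For the first term, fix $y\in U$ and choose $B\in\mathcal{B}$ with $y\in B$. Since $C\in\langle A,\mathcal{B}\rangle$, there is a point $z\in C\cap B$. As $B\subseteq U$, this point also lies in $C'$. Hence
\begin{align*}
    d(y,C')\leq d(y,z)\leq\diam(B)\leq\max_{B'\in\mathcal{B}}\diam(B').
\end{align*}
Taking the maximum over $y\in U$ gives the claimed bound.

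There is no real obstacle in this argument. The only points needing care are that $C'$ is non-empty and closed, so that the Hausdorff distance is defined, and that the witness $z$ is taken inside the same set $B$ that contains $y$. The hypothesis involving $A$, namely $C\subseteq A\cup U$, plays no role here.
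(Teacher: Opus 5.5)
Your proposal is correct and is essentially the paper's proof. The paper phrases it through the inclusions $C\cap\bigcup\mathcal{B}\subseteq\bigcup\mathcal{B}$ and $\bigcup\mathcal{B}\subseteq\ball_\delta\bigl[C\cap\bigcup\mathcal{B}\bigr]$ instead of the point-distance formula of Remark~\ref{rem:hausdorff_metric_formulas}, but the key step is the same: for $y\in B$, pick the witness in $C\cap B$ for the same $B$.
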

\begin{proof}
    Denote $\delta:=\max_{B\in \mathcal{B}} \diam(B)$ and 
    $C_\mathcal{B}:=C\cap \bigcup \mathcal{B}$. 
    Clearly, we have $C_\mathcal{B}\subseteq \bigcup \mathcal{B}\subseteq \ball_\delta[\bigcup \mathcal{B}]$. 
    Furthermore, for $x\in \bigcup \mathcal{B}$ there exists $B\in \mathcal{B}$ with $x\in B$. 
    Since $C\in \langle A,\mathcal{B}\rangle$ there exists $y\in B\cap C\subseteq C_\mathcal{B}$. 
    We have
    $d(x,y)\leq \diam(B)\leq \delta$ and hence 
    $x\in \ball_\delta(y)\subseteq \ball_\delta[C_\mathcal{B}]$.
    This shows $\bigcup \mathcal{B}\subseteq \ball_\delta[C_\mathcal{B}]$. 
\end{proof}

\begin{lemma}
    \label{lem:hyper_diameterControlII}
    For $A\in \hyper(X)$ and a finite family $\mathcal{B}\subseteq \hyper(X)$ with $\bigcup\mathcal{B}\subseteq A$ we have 
    \[
    \diam(\langle A, \mathcal{B}\rangle)\leq d\left(A, \bigcup\mathcal{B}\right)+\max_{B\in \mathcal{B}} \diam(B). 
    \]
\end{lemma}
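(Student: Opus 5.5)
The plan is to bound the Hausdorff distance between two arbitrary elements $C,C'\in\langle A,\mathcal{B}\rangle$ by going through the fixed set $\bigcup\mathcal{B}$. Fix such $C,C'$ and write $C_\mathcal{B}:=C\cap\bigcup\mathcal{B}$. By the triangle inequality it suffices to prove
\[
d\Bigl(C,\bigcup\mathcal{B}\Bigr)\leq \max\Bigl\{d\Bigl(A,\bigcup\mathcal{B}\Bigr),\max_{B\in\mathcal{B}}\diam(B)\Bigr\}
\]
for every $C\in\langle A,\mathcal{B}\rangle$. Indeed, applying this to $C$ and $C'$ then gives $d(C,C')$ at most twice this maximum, which is not quite the claimed sum. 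So I would rather control $d(C,C')$ directly, using Remark \ref{rem:hausdorff_metric_formulas} in the form $d(C,C')=\max\{\max_{x\in C}d(x,C'),\max_{x\in C'}d(x,C)\}$. By symmetry it suffices to bound $d(x,C')$ for $x\in C$.

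Set $\delta:=\max_{B\in\mathcal{B}}\diam(B)$. Take $x\in C$. Since $C\subseteq A\cup\bigcup\mathcal{B}=A$ (using $\bigcup\mathcal{B}\subseteq A$), we have $x\in A$. Hence there is $z\in\bigcup\mathcal{B}$ with $d(x,z)\leq d(x,\bigcup\mathcal{B})\leq d(A,\bigcup\mathcal{B})$. Now $z$ lies in some $B\in\mathcal{B}$. Since $C'\in\langle A,\mathcal{B}\rangle$, there is $y\in B\cap C'$, and so $d(z,y)\leq\diam(B)\leq\delta$. This is exactly the argument of Lemma \ref{lem:hyper_diameterControlI}. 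Therefore
\[
d(x,C')\leq d(x,z)+d(z,y)\leq d\Bigl(A,\bigcup\mathcal{B}\Bigr)+\delta .
\]
The same bound holds with the roles of $C$ and $C'$ exchanged. Thus $d(C,C')\leq d(A,\bigcup\mathcal{B})+\delta$, and taking the supremum over $C,C'$ gives the bound on $\diam(\langle A,\mathcal{B}\rangle)$.

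The only delicate point is the observation that every $C\in\langle A,\mathcal{B}\rangle$ lies inside $A$. This is where the hypothesis $\bigcup\mathcal{B}\subseteq A$ enters, and it is what allows us to use $d(A,\bigcup\mathcal{B})$, which in particular bounds $\max_{x\in A}d(x,\bigcup\mathcal{B})$. The rest is two applications of the triangle inequality. One should also note that $\langle A,\mathcal{B}\rangle$ is nonempty (for instance $A$ itself belongs to it, provided each $B\in\mathcal{B}$ is nonempty), although this is irrelevant for an upper bound.
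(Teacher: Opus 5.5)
Your proof is correct and follows the paper's argument. The paper likewise uses $C'\subseteq A$ and bounds $\max_{x\in C'}d(x,C)$ by passing through $C_\mathcal{B}=C\cap\bigcup\mathcal{B}$, combining Lemma~\ref{lem:hyper_diameterControlI} with the Hausdorff triangle inequality $d(A,C_\mathcal{B})\leq d(A,\bigcup\mathcal{B})+d(\bigcup\mathcal{B},C_\mathcal{B})$; this is your pointwise chain $x\to z\to y$ written at the level of sets.
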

\begin{proof}
    Denote $\delta:=\max_{B\in \mathcal{B}} \diam(B)$.
    For $C\in \langle A,\mathcal{B}\rangle$ we denote 
    $C_\mathcal{B}:=C\cap \bigcup \mathcal{B}$. 
    Consider $C,C'\in \langle A,\mathcal{B}\rangle$. 
    For $x\in C'$ we have $x\in A\cup \bigcup\mathcal{B}=A$. From $C_\mathcal{B}\subseteq C$ and Remark \ref{rem:hausdorff_metric_formulas} we observe 
    \begin{align*}
        d(x,C)
        \leq d(x,C_\mathcal{B})
        \leq \sup_{x'\in A} d(x',C_\mathcal{B})
        \leq d(A,C_\mathcal{B})
        \leq d\left(A,\bigcup \mathcal{B} \right) + d\left(\bigcup \mathcal{B},C_\mathcal{B} \right).
    \end{align*}
    Thus, Lemma \ref{lem:hyper_diameterControlI}  yields
    \begin{align*}
        \sup_{x\in C'}d(x,C)\leq d\left(A,\bigcup \mathcal{B}\right) + \delta.
    \end{align*}
    Similarly, we argue for 
    $
        \sup_{x\in C}d(x,C')\leq d\left(A,\bigcup \mathcal{B}\right) + \delta
    $
    and it follows from Remark~\ref{rem:hausdorff_metric_formulas} that 
    \begin{align*}
        d(C,C')\leq d\left(A,\bigcup \mathcal{B}\right) + \delta. 
    \end{align*}
    Taking the supremum over all $C,C'\in \langle A, \mathcal{B}\rangle$ we obtain the statement. 
\end{proof}

\begin{lemma}
\label{lem:hyper_diameterControlIII}
    For $A\in \hyper(X)$ and a finite family $\mathcal{B}\subseteq \hyper(X)$ with $\bigcup\mathcal{B}\subseteq A$ we have 
    \[
    (T^*\diam)(\langle A, \mathcal{B}\rangle)\leq (T^*d)\left(A, \bigcup\mathcal{B}\right)+\sum_{B\in \mathcal{B}} (T^*\diam))(B). 
    \]
\end{lemma}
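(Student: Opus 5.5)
The plan is to reduce to Lemma \ref{lem:hyper_diameterControlII} applied to the images under $T$. Here $(T^*\diam)(\langle A,\mathcal{B}\rangle)=\diam(T\langle A,\mathcal{B}\rangle)$, where $T\langle A,\mathcal{B}\rangle=\{TC;\, C\in \langle A,\mathcal{B}\rangle\}$ is a closed subset of $\hyper(X)$.

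\emph{Step 1.} For $C\in \langle A,\mathcal{B}\rangle$, show that $TC\in \langle TA, T\mathcal{B}\rangle$, where $T\mathcal{B}:=\{TB;\, B\in\mathcal{B}\}$. From $C\subseteq A\cup\bigcup\mathcal{B}=A$ we get $TC\subseteq TA=TA\cup\bigcup T\mathcal{B}$. Moreover, $C\cap B\neq\emptyset$ gives $TC\cap TB\neq\emptyset$ for each $B\in\mathcal{B}$, and $C\cap A\ne\emptyset$ gives $TC\cap TA\neq\emptyset$. Also $\bigcup T\mathcal{B}=T(\bigcup\mathcal{B})\subseteq TA$, and $T\mathcal{B}$ is a finite family in $\hyper(X)$, since images of compact sets are compact. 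Hence $T\langle A,\mathcal{B}\rangle\subseteq\langle TA,T\mathcal{B}\rangle$.

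\emph{Step 2.} Combine monotonicity of $\diam$ with Lemma \ref{lem:hyper_diameterControlII} applied to $TA$ and $T\mathcal{B}$:
\[
\diam(T\langle A,\mathcal{B}\rangle)\leq \diam(\langle TA,T\mathcal{B}\rangle)\leq d\left(TA,\bigcup T\mathcal{B}\right)+\max_{B\in\mathcal{B}}\diam(TB).
\]

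\emph{Step 3.} Identify the terms. We have $d(TA,T\bigcup\mathcal{B})=(T^*d)(A,\bigcup\mathcal{B})$, and $\max_{B}\diam(TB)\leq\sum_B (T^*\diam)(B)$ because all terms are nonnegative. This gives the claim.

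The only step needing care is Step 1: checking that the image map respects the Vietoris-type sets $\langle\cdot\rangle$, including the hypothesis $\bigcup T\mathcal{B}\subseteq TA$ needed for Lemma \ref{lem:hyper_diameterControlII}. Presumably the sum in place of the max is stated so that it survives averaging along $T^k$ in the later mean versions.
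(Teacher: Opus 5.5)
Your proposal is correct and follows the paper's own proof essentially verbatim. The paper establishes the inclusion $T\langle A,\mathcal{B}\rangle\subseteq\langle TA,T\mathcal{B}\rangle$, applies Lemma~\ref{lem:hyper_diameterControlII} to $TA$ and $T\mathcal{B}$, and bounds the maximum by the sum; your Step~1 spells out details the paper only asserts, including $\bigcup T\mathcal{B}=T(\bigcup\mathcal{B})\subseteq TA$.
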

\begin{proof}
    Note that  
    $T\langle A, \mathcal{B}\rangle\subseteq \langle TA, T\mathcal{B}\rangle$, where we abbreviate $T\mathcal{B}:=(TB)_{B\in \mathcal{B}}$. 
    Thus, Lemma \ref{lem:hyper_diameterControlII} yields
    \begin{align*}
        (T^*\diam)(\langle A, \mathcal{B}\rangle)
        &=
        \diam(T\langle A, \mathcal{B}\rangle)
        \leq 
        \diam(\langle TA, T\mathcal{B}\rangle)\\
        &\leq d\left(TA,\bigcup_{B\in \mathcal{B}}TB\right)+\max_{B\in \mathcal{B}} \diam(TB)\\
        &\leq (T^*d)\left(A,\bigcup \mathcal{B}\right)+\sum_{B\in \mathcal{B}} (T^*\diam)(B).
    \qedhere
    \end{align*}
\end{proof}

\begin{lemma}
    \label{lem:hyper_meanDiameterControlV}
    Let $(X,T)$ be a dynamical system. 
    For $A\in \hyper(X)$ and a finite family $\mathcal{B}\subseteq \hyper(X)$ with $\bigcup\mathcal{B}\subseteq A$ we have 
    \[
    \Diam_\mathcal{F}(\langle A, \mathcal{B}\rangle)\leq D_\mathcal{F}\left(A, \bigcup\mathcal{B}\right)+\sum_{B\in \mathcal{B}} \Diam_\mathcal{F}(B). 
    \]
\end{lemma}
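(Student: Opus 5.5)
The plan is to follow the pattern already used to pass from Lemma~\ref{lem:meanDiameterControlAtPointI} to Lemma~\ref{lem:meanDiameterControlAtPointII}: apply the one-step estimate at every time $k$, average the resulting inequalities, and then pass to the limit superior.

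First I will apply Lemma~\ref{lem:hyper_diameterControlIII} to the dynamical system $(X,T^k)$ for each $k\in\mathbb{N}_0$. The hypotheses $A\in\hyper(X)$, $\mathcal{B}$ finite and $\bigcup\mathcal{B}\subseteq A$ do not involve $T$, so the lemma holds verbatim with $T$ replaced by $T^k$. This gives
\[
((T^k)^*\diam)(\langle A,\mathcal{B}\rangle)\leq ((T^k)^*d)\left(A,\bigcup\mathcal{B}\right)+\sum_{B\in\mathcal{B}}((T^k)^*\diam)(B).
\]
The only detail to check is the case $k=0$. There the inequality reads $\diam(\langle A,\mathcal{B}\rangle)\le d(A,\bigcup\mathcal{B})+\sum_B\diam(B)$, which follows directly from Lemma~\ref{lem:hyper_diameterControlII} because the maximum is bounded by the sum. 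Alternatively, $T^0=\mathrm{id}$ is a continuous surjection, so Lemma~\ref{lem:hyper_diameterControlIII} already covers this case.

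Next I will average these inequalities over $k=0,\dots,n-1$. Every term is linear in the pullback, so for each $n\in\mathbb{N}$
\[
(F_n^*\diam)(\langle A,\mathcal{B}\rangle)\leq (F_n^*d)\left(A,\bigcup\mathcal{B}\right)+\sum_{B\in\mathcal{B}}(F_n^*\diam)(B).
\]
Here $\langle A,\mathcal{B}\rangle$ is a closed subset of $\hyper(X)$, so its diameter is taken in the Hausdorff metric. The function $F_n^*\diam$ on the left is therefore evaluated on $\hyper(\hyper(X))$, and this matches the paper's convention for $\Diam_\mathcal{F}$ of subsets of $\hyper(X)$.

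Finally I will take the limit superior as $n\to\infty$. Since $\mathcal{B}$ is finite, subadditivity of $\limsup$ over a finite sum gives the stated bound:
\[
\limsup_{n}\left(a_n+\sum_{B\in\mathcal{B}} b_n^B\right)\le\limsup_n a_n+\sum_{B\in\mathcal{B}}\limsup_n b_n^B .
\]
No step here is a real obstacle. The only points needing care are that Lemma~\ref{lem:hyper_diameterControlIII} is valid for every iterate $T^k$ and that $\limsup$ is subadditive; finiteness of $\mathcal{B}$ is what makes the latter apply.
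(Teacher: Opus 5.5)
Your proposal is correct and takes the same route as the paper: apply Lemma~\ref{lem:hyper_diameterControlIII} to each iterate $T^k$, average to get the inequality for $F_n^*$, and then use subadditivity of the limit superior over the finite family $\mathcal{B}$. Your additional remarks on the case $k=0$ and on the diameter being taken in the Hausdorff metric on $\hyper(X)$ are accurate, but they are just details that the paper leaves implicit.
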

\begin{proof}
    With a similar argument as above from Lemma \ref{lem:hyper_diameterControlIII} we observe 
    \begin{align*}
        (F_n^*\diam)(\langle A, \mathcal{B}\rangle)
        \leq 
        (F_n^*d)\left(A,\bigcup \mathcal{B}\right)+\sum_{B\in \mathcal{B}} (F_n^*\diam)(B)
    \end{align*}
    for all $n\in \mathbb{N}$. 
    It follows that 
    \begin{align*}
        \Diam_\mathcal{F}(\langle A,\mathcal{B}\rangle)
        &=
        \limsup_{n\to \infty}(F_n^*\diam)(\langle A, \mathcal{B}\rangle)\\
        &\leq 
        \limsup_{n\to \infty}(F_n^*d)\left(A,\bigcup \mathcal{B}\right)+
        \sum_{B\in \mathcal{B}}\limsup_{n\to \infty} (F_n^*\diam)(B)\\
        &=D_\mathcal{F}\left(A,\bigcup \mathcal{B}\right) 
        + \sum_{B\in \mathcal{B}} \Diam_\mathcal{F}(B).
        \qedhere
    \end{align*}
\end{proof}

\begin{proposition}
\label{pro:hyper_pointwiseDMEvsMEvsWME}
    Consider $A\in \hyper(X)$ such that all $x\in A$ are $\mathcal{F}$-diam-mean equicontinuity points of $(X,T)$. The following statements are equivalent. 
    \begin{itemize}
        \item[(i)] $A$ is a $\mathcal{F}$-diam-mean equicontinuity point of $(\hyper(X),T)$.
        \item[(ii)] $A$ is a $\mathcal{F}$-mean equicontinuity point of $(\hyper(X),T)$.
        \item[(iii)] $A$ is a $\mathcal{F}$-weak-mean equicontinuity point of $(\hyper(X),T)$.
    \end{itemize}
\end{proposition}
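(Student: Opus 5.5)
The implications (i)$\Rightarrow$(ii)$\Rightarrow$(iii) hold pointwise by Remark~\ref{rem:prelims_basicRelationshipWME_ME_DME} applied to $(\hyper(X),T)$. So the real task is (iii)$\Rightarrow$(i). The plan is to exhibit, for a given $\epsilon>0$, a Hausdorff neighbourhood of $A$ that sits inside a set of the form $\langle A',\mathcal{B}\rangle$. We then bound its mean diameter by Lemma~\ref{lem:hyper_meanDiameterControlV}. The remaining $D_\mathcal{F}$-term is converted into $\meanorbPM_\mathcal{F}$-terms via Lemma~\ref{lem:hyperWeylVsMeanOrb}; this is possible because all comparisons will be between nested sets.

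\emph{Choice of parameters.} Fix $\epsilon>0$. By (iii) there is $\rho>0$ such that $\meanorbPM_\mathcal{F}(A,C)<\epsilon/\spa(\epsilon)$ for all $C\in\hyper(X)$ with $d(A,C)\le\rho$. Choose a finite $\rho$-spanning set $F=\{x_1,\dots,x_N\}\subseteq A$ of $A$, so that $d(A,F)\le\rho$. Only now, with $N$ fixed, use that each $x_i$ is an $\mathcal{F}$-diam-mean equicontinuity point to pick $0<\eta\le\rho$ with $\Diam_\mathcal{F}(\ball_\eta(x_i))<\epsilon/N$ for all $i$. This ordering is the key point. Lemma~\ref{lem:hyper_meanDiameterControlV} involves a \emph{sum} of mean diameters, so the number of pieces must be fixed before their size is chosen. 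The naive attempt of covering $A$ by small balls, whose number then grows as the balls shrink, is the main obstacle, and it is avoided this way. Set $A':=\ball_\eta[A]$ and $\mathcal{B}:=\{\ball_\eta(x_1),\dots,\ball_\eta(x_N)\}$. Then $\bigcup\mathcal{B}\subseteq A'$.

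\emph{Neighbourhood inclusion.} Let $C\in\hyper(X)$ with $d(A,C)<\eta$. Then $C\subseteq A'$, and $C\cap\ball_\eta(x_i)\neq\emptyset$ because $x_i\in A$. Hence $\ball_{\eta'}(A)\subseteq\langle A',\mathcal{B}\rangle$ for any $\eta'<\eta$. Lemma~\ref{lem:hyper_meanDiameterControlV} then gives
\[\Diam_\mathcal{F}(\ball_{\eta'}(A))\le D_\mathcal{F}\left(A',\bigcup\mathcal{B}\right)+\sum_{i=1}^N\Diam_\mathcal{F}(\ball_\eta(x_i))< D_\mathcal{F}\left(A',\bigcup\mathcal{B}\right)+\epsilon.\]
By the triangle inequality, $D_\mathcal{F}(A',\bigcup\mathcal{B})\le D_\mathcal{F}(A',A)+D_\mathcal{F}(A,F)+D_\mathcal{F}(F,\bigcup\mathcal{B})$, and we bound the three terms separately.

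\emph{The three terms.} First, $A\subseteq A'$ with $d(A,A')\le\eta\le\rho$. Second, $F\subseteq A$ with $d(A,F)\le\rho$. In both cases Lemma~\ref{lem:hyperWeylVsMeanOrb} combined with the choice of $\rho$ gives a bound of $\spa(\epsilon)\cdot\epsilon/\spa(\epsilon)+2\epsilon=3\epsilon$. For the third term, $T^kF=\{T^kx_i\}$ and $T^k\bigcup\mathcal{B}=\bigcup_i T^k\ball_\eta(x_i)$, with $T^kx_i\in T^k\ball_\eta(x_i)$. Hence $d(T^kF,T^k\bigcup\mathcal{B})\le\max_i\diam(T^k\ball_\eta(x_i))\le\sum_i\diam(T^k\ball_\eta(x_i))$. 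Averaging over $k$ and taking the limit superior yields $D_\mathcal{F}(F,\bigcup\mathcal{B})\le\sum_i\Diam_\mathcal{F}(\ball_\eta(x_i))<\epsilon$. Altogether $\Diam_\mathcal{F}(\ball_{\eta'}(A))<9\epsilon$. Since $\epsilon>0$ was arbitrary, $A$ is an $\mathcal{F}$-diam-mean equicontinuity point of $(\hyper(X),T)$.
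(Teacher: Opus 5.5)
Your proof is correct and follows essentially the paper's route. Both arguments fix the finite subset of $A$ before shrinking the neighbourhoods around its points, bound the mean diameter of a $\langle\,\cdot\,,\mathcal{B}\rangle$-set containing a neighbourhood of $A$ by Lemma~\ref{lem:hyper_meanDiameterControlV}, and control the remaining $D_\mathcal{F}$-term through Lemma~\ref{lem:hyperWeylVsMeanOrb} on nested sets. The only difference is that you split $D_\mathcal{F}(A',\bigcup\mathcal{B})$ through $A$ and $F$ (two nested comparisons plus a direct diameter bound), whereas the paper applies Lemma~\ref{lem:hyperWeylVsMeanOrb} once to the nested pair $\bigcup\mathcal{B}\subseteq\ball_\delta[A]$.
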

\begin{proof}
    From Remark \ref{rem:prelims_basicRelationshipWME_ME_DME} we know that (i)$\Rightarrow$(ii)$\Rightarrow$(iii). 
    To show that (iii) implies (i) consider $\epsilon>0$. 
    Since $A$ is a $\mathcal{F}$-weak-mean equicontinuity point of $(\hyper(X),G)$ there exists $\delta>0$ such that for all $C,C'\in \ball_\delta(A)$ we have $\meanorbPM_\mathcal{F}(C,C')<\epsilon/(4\spa(\epsilon/8))$. 
    There exists a finite subset $E\subseteq A$, such that $d(E,A)\leq \delta/2$. 
    Since $E\subseteq A$ consists of $\mathcal{F}$-diam-mean equicontinuity points for any $x\in E$ there exists a compact neighbourhood $B_x$ of $x$ such that 
    $\Diam(B_x)<\epsilon/(2|E|)$. 
    W.l.o.g.\ we assume that $B_x\subseteq \ball_{\delta/2}(x)$. 
    Denote $\mathcal{B}:=(B_x)_{x\in E}$. 
    For $x\in E$ we have $x\in B_x\subseteq \ball_{\delta/2}(x)$ and hence $E\subseteq \bigcup \mathcal{B}\subseteq \ball_{\delta/2}[E]$. 
    It follows that 
    $d(E,\bigcup \mathcal{B})\leq \delta/2$ and hence 
    $d(A,\bigcup\mathcal{B})\leq d(A,E)+d(E,\bigcup\mathcal{B})\leq \delta$. 
    Since clearly $d(A,\ball_\delta[A])\leq \delta$ our choice of $\delta$ yields
    $\meanorbPM_\mathcal{F}\left(\ball_\delta[A],\bigcup\mathcal{B}\right)<\epsilon/(4\spa(\epsilon/8))$. 
    Note that $\bigcup\mathcal{B}\subseteq \ball_\delta[A]$. 
    It follows from Lemma \ref{lem:hyperWeylVsMeanOrb} that
    \begin{align*}
        D_\mathcal{F}\left(\ball_\delta[A],\bigcup\mathcal{B}\right)\leq \spa\left(\tfrac{\epsilon}{8}\right)\mathcal{W}_\mathcal{F}\left(\ball_\delta[A],\bigcup\mathcal{B}\right)+2\frac{\epsilon}{8}
        <\frac{\epsilon}{4} + \frac{\epsilon}{4}
        =\frac{\epsilon}{2}. 
    \end{align*}
    From Lemma \ref{lem:hyper_meanDiameterControlV} we conclude
    \begin{align*}
        \Diam_\mathcal{F}\left(\langle\ball_\delta[A],\mathcal{B}\rangle\right)
        &\leq D_\mathcal{F}\left(\ball_\delta[A],\bigcup\mathcal{B}\right)+\sum_{x\in E}\Diam_\mathcal{F}(B_x)\\
        &< \frac{\epsilon}{2} + \sum_{x\in E}\frac{\epsilon}{2|E|}= \epsilon.
    \end{align*}
    From $A\subseteq \ball_\delta[A]^\circ$ and $A\cap B^\circ\neq \emptyset$ for all $B\in \mathcal{B}$ we observe that 
    \begin{align*}
        A\in \langle \ball_\delta[A]^\circ,(B^\circ)_{B\in \mathcal{B}}\rangle\subseteq \langle\ball_\delta[A],\mathcal{B}\rangle.
    \end{align*}
    Thus, $\langle\ball_\delta[A],\mathcal{B}\rangle$ is a compact neighbourhood of $A$ with 
    \begin{align*}
    \Diam_\mathcal{F}\left(\langle\ball_\delta[A],\mathcal{B}\rangle\right)
    &<\epsilon.    
    \qedhere
    \end{align*}
\end{proof}

\begin{proof}[Proof of Theorem \ref{the:intro_Main}(\ref{intro_Main_H(X)}):]
    By Remark \ref{rem:prelims_basicRelationshipWME_ME_DME} it remains to show that $(\hyper(X),T)$ is diam-mean equicontinuous whenever $(\hyper(X),T)$ is weakly-mean equicontinuous. 
    If $(\hyper(X),T)$ is weakly-mean equicontinuous, then for any $x\in X$ we have that $\{x\}$ is a $\mathcal{F}$-weak mean equicontinuity point of $(\hyper(X),T)$ and Proposition \ref{pro:hyper_HXwmeImpliesXdme} yields that $x$ is a $\mathcal{F}$-diam-mean equicontinuity point of $(X,T)$.
    Thus, for any $A\in \hyper(X)$ we have that $A$ consists of $\mathcal{F}$-diam-mean equicontinuity points (w.r.t.\ $(X,T)$) and that $A$ is a $\mathcal{F}$-weak-mean equicontinuity point of $(\hyper(X),T)$. 
    From Proposition~\ref{pro:hyper_pointwiseDMEvsMEvsWME} it follows that $A$ is a $\mathcal{F}$-diam-mean equicontinuity point of $(\hyper(X),T)$. 
\end{proof}

\section{Almost diam-mean equicontinuity} \label{sec:almostDME}

A dynamical system $(X,T)$ is called \emph{almost $\mathcal{F}-$diam-mean equicontinuous} if and only if the set of $\mathcal{F}$-diam-mean equicontinuity points is dense in $X$. For further details on this notion see \cite{garcia2021mean}, where it is called {almost diam-mean equicontinuity}. 

Note that whenever $D\subseteq X$ is dense, then $\{\delta_x;\, x\in D\}$ is dense in $\{\delta_x;\, x\in X\}$ and it follows from a standard application of the Krein-Milman theorem that the set of all finite convex combinations of elements of $\{\delta_x;\, x\in D\}$ is dense in $\myper(X)$. 
Thus, the set of all $\mu\in \myper(X)$ with $\supp(\mu)\subseteq D$ is dense in $\myper(X)$. 
Combining this observation with Proposition \ref{pro:supportAndDiamMeanEquicontinuousPoints} we obtain the following. 

\begin{corollary}
\label{cor:MXalmostDME}
    If $(X,T)$ is an almost $\mathcal{F}$-diam-mean equicontinuous dynamical system, then $(\myper(X),T)$ is almost $\mathcal{F}$-diam-mean equicontinuous. 
\end{corollary}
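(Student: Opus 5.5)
Let $D\subseteq X$ denote the set of $\mathcal{F}$-diam-mean equicontinuity points of $(X,T)$; by assumption $D$ is dense in $X$. The plan is to show that the set $\mathcal{M}_D:=\{\mu\in\myper(X);\,\supp(\mu)\subseteq D\}$ is dense in $\myper(X)$ and consists of $\mathcal{F}$-diam-mean equicontinuity points of $(\myper(X),T)$. This is exactly what the definition of almost $\mathcal{F}$-diam-mean equicontinuity requires for $(\myper(X),T)$.

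First, the second claim is immediate from Proposition~\ref{pro:supportAndDiamMeanEquicontinuousPoints}. If $\supp(\mu)\subseteq D$, then $\supp(\mu)$ consists of $\mathcal{F}$-diam-mean equicontinuity points, so $\mu$ is an $\mathcal{F}$-diam-mean equicontinuity point of $(\myper(X),T)$. It is worth noting that we never need $D$ to be closed. The proposition only asks that every point of the compact set $\supp(\mu)$ lie in $D$, and Lemma~\ref{lem:deltaIndependenceOfNotions}(iii) then supplies the uniform $\delta'$ on $\supp(\mu)$.

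Second, for density I will approximate directly in the Wasserstein metric rather than citing Krein--Milman. Let $\mu\in\myper(X)$ and $\eta>0$. Choose a finite partition $X=P_1\cup\dots\cup P_m$ into Borel sets of diameter at most $\eta$; for instance, take a finite $\eta/2$-spanning set and disjointify the associated closed balls. For each $i$ with $P_i\neq\emptyset$, pick $x_i\in D$ with $d(x_i,P_i)<\eta$, which is possible since $D$ is dense. Set $\nu:=\sum_i\mu(P_i)\delta_{x_i}$. The coupling $\kappa:=\sum_i (\mu|_{P_i})\times\delta_{x_i}$ lies in $\Pi(\mu,\nu)$. Every pair in its support satisfies $d(x,x_i)\le \diam(P_i)+d(x_i,P_i)<2\eta$, so $d(\mu,\nu)\le\kappa(d)\le 2\eta$. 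Since $\supp(\nu)\subseteq\{x_1,\dots,x_m\}\subseteq D$, we get $\nu\in\mathcal{M}_D$, and this proves density.

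Neither step presents a real obstacle. The only point requiring care is the density argument: it must produce measures whose support lies entirely in $D$, not merely in the closure of $D$, and choosing the atoms $x_i$ inside $D$ handles this.
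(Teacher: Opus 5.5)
Your proof is correct and has the same structure as the paper's: show that the measures with $\supp(\mu)\subseteq D$ are dense in $\myper(X)$, then apply Proposition~\ref{pro:supportAndDiamMeanEquicontinuousPoints}. The only difference is that you prove density by an explicit partition-and-coupling estimate in the Wasserstein metric, while the paper uses Krein--Milman to show that finite convex combinations of $\delta_x$, $x\in D$, are dense; your version is self-contained and even quantitative ($d(\mu,\nu)\le 2\eta$).
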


In consideration of our previous discussion it is unexpected that the analog holds for $(\hyper(X),T)$, which we present next. 

\begin{lemma}
\label{lem:almostDMEHX_I}
    For finite $E\in \hyper(X)$, $\delta>0$ and $A\in \ball_\delta(E)$ we have 
    \[
        d(TA,TE)\leq 2\max_{x\in E}\diam(T\ball_\delta(x)).
    \]
\end{lemma}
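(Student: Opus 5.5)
We prove the bound directly from the first formula for the Hausdorff metric in Remark~\ref{rem:hausdorff_metric_formulas}, by estimating separately the two one-sided distances $\max_{y\in TA}d(y,TE)$ and $\max_{y\in TE}d(y,TA)$. Denote $\eta:=\max_{x\in E}\diam(T\ball_\delta(x))$. Note that $A\in\ball_\delta(E)$ is understood with respect to the Hausdorff metric, so $d(A,E)\leq\delta$. This gives two inclusions: $A\subseteq\ball_\delta[E]$, and every $x\in E$ has some $a\in A$ with $d(x,a)\leq\delta$.

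\emph{First half.} Let $y\in TA$ and write $y=Ta$ with $a\in A$. Since $A\subseteq \ball_\delta[E]$, there is $x\in E$ with $a\in\ball_\delta(x)$. Both $Ta$ and $Tx$ lie in $T\ball_\delta(x)$, so
\[
d(y,TE)\leq d(Ta,Tx)\leq \diam(T\ball_\delta(x))\leq \eta.
\]

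\emph{Second half.} Let $y=Tx\in TE$ with $x\in E$. By the second inclusion there is $a\in A\cap\ball_\delta(x)$. Again $Tx,Ta\in T\ball_\delta(x)$, so
\[
d(y,TA)\leq d(Tx,Ta)\leq\eta.
\]

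Combining the two halves via Remark~\ref{rem:hausdorff_metric_formulas} gives $d(TA,TE)\leq\eta$, which is even stronger than the claimed $2\eta$. The factor $2$ leaves slack, for instance for a version where $A$ is only required to be within $\delta$ of $E$ up to a triangle-inequality detour through an intermediate point. The finiteness of $E$ is used only to make the maximum over $x\in E$ well defined (it is attained). The only point needing care is the routine step of unpacking $A\in\ball_\delta(E)$ into the two inclusions above, which relies on the minimum in the definition of $d_\hyper$ being attained, as noted in Section~\ref{sec:preliminaries}. No step seems to be a genuine obstacle.
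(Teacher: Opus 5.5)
Your proof is correct and is essentially the paper's argument. The paper likewise sets $\epsilon:=\max_{x\in E}\diam(T\ball_\delta(x))$ and shows $TA\subseteq\ball_\epsilon[TE]$ (via $A\subseteq\ball_\delta[E]$) and $TE\subseteq\ball_\epsilon[TA]$ (via a point $a_x\in A\cap\ball_\delta(x)$); these are your two one-sided estimates written as inclusions. Like you, it actually concludes $d(TA,TE)\leq\epsilon$, and the factor $2$ only becomes necessary in Lemma~\ref{lem:almostDMEHX_II}, where the triangle inequality through $TE$ is used.
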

\begin{proof}
    Denote $\epsilon:=\max_{x\in E}\diam(T\ball_\delta(x))$. 
    For $x\in E$ we have $Tx\in T\ball_\delta(x)$ and 
    $\diam(T\ball_\delta(x))\leq \epsilon$ yields $T\ball_\delta(x)\subseteq \ball_\epsilon(Tx)$. 
    From $A\in \ball_\delta(E)$ we observe $A\subseteq \ball_\delta[E]$ and $E\subseteq \ball_\delta[A]$. 
    Thus, 
    \begin{align*}
        TA
        \subseteq T\ball_\delta[E]
        =T\left(\bigcup_{x\in E}\ball_\delta(x)\right)
        =\bigcup_{x\in E}T\ball_\delta(x)
        \subseteq \bigcup_{x\in E}\ball_\epsilon(Tx)
        =\ball_\epsilon[TE].
    \end{align*}
    Furthermore, from $E\subseteq \ball_\delta[A]$ we know that for $x\in E$ there exists $a_x\in A$ with $d(a_x,x)\leq \delta$, i.e.\ $a_x\in \ball_\delta(x)$. 
    It follows that $Ta_x\in T\ball_\delta(x)\subseteq \ball_\epsilon(Tx)$. i.e.\ $Tx\in \ball_\epsilon(Ta_x)$.
    Thus, 
    \[TE=\bigcup_{x\in E}\{Tx\}\subseteq \bigcup_{x\in E}\ball_\epsilon(Ta_x)\subseteq \ball_\epsilon[TA].\]
    This shows $d(TA,TE)\leq \epsilon$. 
\end{proof}

\begin{lemma}
\label{lem:almostDMEHX_II}
    For finite $E\in \hyper(X)$ and $\delta>0$ we have 
    \[(T^*\diam)(\ball_\delta(E))\leq 2\max_{x\in E}(T^*\diam)(\ball_\delta(x)).\]
\end{lemma}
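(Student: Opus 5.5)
The plan is to reduce the statement directly to Lemma~\ref{lem:almostDMEHX_I} via the triangle inequality in $\hyper(X)$. Here $\ball_\delta(E)$ denotes the closed Hausdorff ball around $E$ in $\hyper(X)$, and $(T^*\diam)(\ball_\delta(E))=\diam(T\ball_\delta(E))$, where the diameter is taken with respect to the Hausdorff metric on $\hyper(X)$.

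First, by compactness of $\ball_\delta(E)\subseteq \hyper(X)$ and continuity of $T$ on $\hyper(X)$, I choose $A,A'\in \ball_\delta(E)$ with
\[
d(TA,TA')=\diam(T\ball_\delta(E)),
\]
exactly as in the proof of Lemma~\ref{lem:meanDiameterPreparationsIII}.

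Next, I would not apply Lemma~\ref{lem:almostDMEHX_I} to each of $A$ and $A'$ and then use $d(TA,TA')\leq d(TA,TE)+d(TE,TA')$, because that only yields the constant $4$. Instead I reuse the inclusions established in its proof. Set $\epsilon:=\max_{x\in E}\diam(T\ball_\delta(x))$.
\begin{itemize}
\item[(a)] From $A\subseteq \ball_\delta[E]$ we get $TA\subseteq \bigcup_{x\in E}T\ball_\delta(x)$.
\item[(b)] From $E\subseteq\ball_\delta[A']$, for each $x\in E$ there is $a'_x\in A'\cap \ball_\delta(x)$.
\end{itemize}
Now take $y\in TA$. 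By (a), $y\in T\ball_\delta(x)$ for some $x\in E$. By (b), $Ta'_x\in T\ball_\delta(x)$ as well, so $d(y,Ta'_x)\leq \diam(T\ball_\delta(x))\leq\epsilon$. Hence $TA\subseteq \ball_\epsilon[TA']$, and by symmetry $TA'\subseteq \ball_\epsilon[TA]$. This gives $d(TA,TA')\leq \epsilon$, which is even stronger than the claimed bound $2\epsilon$.

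Since $\diam(T\ball_\delta(x))=(T^*\diam)(\ball_\delta(x))$, either route proves the lemma. The fallback of applying Lemma~\ref{lem:almostDMEHX_I} twice with the triangle inequality gives only $4\epsilon$, which is why I use the direct argument.

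The only delicate point I expect is notational: keeping the Hausdorff ball $\ball_\delta(E)$ in $\hyper(X)$ distinct from the ball $\ball_\delta(x)$ in $X$, and using that $T$ commutes with finite unions. There is no real obstacle.
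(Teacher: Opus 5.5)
Your proof is correct, and it differs from the paper's only in how $TA$ and $TA'$ are compared.

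The paper routes through $TE$. It writes $d(TA,TA')\le d(TA,TE)+d(TE,TA')$ and bounds each term by $\epsilon$. That is the estimate actually established in the proof of Lemma~\ref{lem:almostDMEHX_I}, whose statement carries a superfluous factor $2$. So your remark that the triangle-inequality route gives only $4\epsilon$ applies to Lemma~\ref{lem:almostDMEHX_I} as stated, not to what the paper uses: the paper's route does give $2\epsilon$. Your sentence that ``either route'' proves the lemma sits awkwardly next to that remark.

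Your direct pairing matches each $y\in TA\cap T\ball_\delta(x)$ with $Ta'_x$, where $a'_x\in A'\cap\ball_\delta(x)$. This avoids the intermediate point $TE$ altogether and uses only the inclusions encoded in $d(A,E)\le\delta$ and $d(A',E)\le\delta$. It also gives the sharper bound $\diam(T\ball_\delta(E))\le\max_{x\in E}\diam(T\ball_\delta(x))$. That would let one drop the factor $2$ in Lemma~\ref{lem:almostDMEHX_III} and in the subsequent proposition, though this does not matter for the qualitative conclusion.

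Choosing a maximizing pair $A,A'$ is harmless but unnecessary. Bounding $d(TA,TA')$ for arbitrary $A,A'\in\ball_\delta(E)$ already suffices, since every element of $T\ball_\delta(E)$ has this form.
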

\begin{proof}
    Denote 
    \[
        \epsilon:=\max_{x\in E}(T^*\diam)(\ball_\delta(x))=\max_{x\in E}\diam(T\ball_\delta(x)).
    \]
    For $C,C'\in T\ball_\delta(E)$ choose $A,A'\in \ball_\delta(E)$ with $TA=C$ and $TA'=C'$. From Lemma \ref{lem:almostDMEHX_I} we observe 
    \begin{align*}
        d(C,C') = d(TA,TA')
        \leq d(TA,TE) + d(TE,TA')
        \leq 2\epsilon. 
    \end{align*}
    It follows that $T^*\diam(\ball_\delta(E))=\diam(T\ball_\delta(E))\leq 2\epsilon$. 
\end{proof}

\begin{lemma}
\label{lem:almostDMEHX_III}
    For finite $E\in \hyper(X)$ and $\delta>0$ we have 
    \[\Diam_\mathcal{F}(\ball_\delta(E))\leq 2\sum_{x\in E}\Diam_\mathcal{F}(\ball_\delta(x)).\]
\end{lemma}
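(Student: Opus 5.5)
The plan follows the pattern used for Lemma \ref{lem:meanDiameterPreparationsIV} and Lemma \ref{lem:hyper_meanDiameterControlV}: prove the estimate for each iterate, average over the Cesàro window, and pass to the limit superior.

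First, apply Lemma \ref{lem:almostDMEHX_II} to the dynamical system $(X,T^k)$ for each $k\in\mathbb{N}_0$. The lemma only uses that $T$ is a continuous surjection, and the balls $\ball_\delta(E)$ and $\ball_\delta(x)$ do not depend on the map, so we obtain
\[
((T^k)^*\diam)(\ball_\delta(E))\leq 2\max_{x\in E}((T^k)^*\diam)(\ball_\delta(x)).
\]
(For $k=0$ this reads $\diam(\ball_\delta(E))\leq 2\max_{x\in E}\diam(\ball_\delta(x))$, which is the same lemma with $T=\mathrm{id}$.)

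Next, bound the maximum by the sum over the finite set $E$; this is legitimate since all terms are nonnegative. Averaging over $k=0,\dots,n-1$ then gives
\[
(F_n^*\diam)(\ball_\delta(E))\leq 2\sum_{x\in E}(F_n^*\diam)(\ball_\delta(x))
\]
for all $n\in\mathbb{N}$. This replacement of the maximum by the sum is the only real point of the argument. A maximum does not commute with Cesàro averages, since the index attaining the maximum may change with $k$, whereas a finite sum does.

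Finally, take $\limsup_{n\to\infty}$ on both sides. Subadditivity of the limit superior over the finite sum gives
\[
\Diam_\mathcal{F}(\ball_\delta(E))\leq 2\sum_{x\in E}\Diam_\mathcal{F}(\ball_\delta(x)),
\]
which is the claim. I do not expect any step to be an obstacle; the proof is routine once Lemma \ref{lem:almostDMEHX_II} is available.
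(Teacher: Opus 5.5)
Your proposal is correct and follows the paper's proof essentially verbatim. Like the paper, you apply Lemma \ref{lem:almostDMEHX_II} to $(X,T^k)$, bound the maximum over $E$ by the sum, average over $k<n$, and conclude with subadditivity of the limit superior (the paper writes ``$(X^k,T)$'' and ends with $F_n^*\diam$ where $\Diam_\mathcal{F}$ is meant, both of which your version states correctly).
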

\begin{proof}
    From Lemma \ref{lem:almostDMEHX_II} applied to $(X^k,T)$ we observe that
    \[((T^k)^*\diam)(\ball_\delta(E))\leq 2\max_{x\in E}((T^k)^*\diam)(\ball_\delta(x))
    \leq 2\sum_{x\in E}((T^k)^*\diam)(\ball_\delta(x))\]
     for all $k\in \mathbb{N}_0$ and hence 
    \[(F_n^*\diam)(\ball_\delta(E))\leq 2\sum_{x\in E}(F_n^*\diam)(\ball_\delta(x))\]
    for all $n\in \mathbb{N}$.
    It follows that 
    \begin{align*}
        \Diam_\mathcal{F}(\ball_\delta(E))
        &=\limsup_{n\to \infty}
        (F_n^*\diam)(\ball_\delta(E))\\
        &\leq \limsup_{n\to \infty}2\sum_{x\in E}(F_n^*\diam)(\ball_\delta(x))\\
        &\leq 2\sum_{x\in E}\limsup_{n\to \infty}(F_n^*\diam)(\ball_\delta(x))\\
        &=2\sum_{x\in E}(F_n^*\diam)(\ball_\delta(x)). 
        \qedhere
    \end{align*}
\end{proof}

\begin{proposition}
    Let $(X,T)$ be a dynamical system and consider a finite $E\in \hyper(X)$. If $E$ consists of $\mathcal{F}$-diam-mean equicontinuous points, then $E$ is a $\mathcal{F}$-diam-mean equicontinuous point of $(\hyper(X),T)$. 
\end{proposition}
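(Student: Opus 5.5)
The plan is to combine Lemma~\ref{lem:deltaIndependenceOfNotions}(iii) with Lemma~\ref{lem:almostDMEHX_III}. Fix $\epsilon>0$ and write $E=\{x_1,\dots,x_m\}$ with $m=|E|$. Each $x_i$ is a $\mathcal{F}$-diam-mean equicontinuity point of $(X,T)$. Applying Lemma~\ref{lem:deltaIndependenceOfNotions}(iii) to the closed set $E$, or simply taking the minimum of finitely many radii, we obtain one common $\delta>0$ such that
\[\Diam_\mathcal{F}(\ball_\delta(x))<\frac{\epsilon}{2m}\quad\text{for all } x\in E.\]

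Here $\ball_\delta(E)$ denotes the closed Hausdorff-metric ball around $E$ in $\hyper(X)$. Lemma~\ref{lem:almostDMEHX_III} then yields
\[\Diam_\mathcal{F}(\ball_\delta(E))\leq 2\sum_{x\in E}\Diam_\mathcal{F}(\ball_\delta(x))<2m\cdot\frac{\epsilon}{2m}=\epsilon.\]
Since $\epsilon>0$ was arbitrary, this is exactly the definition of $E$ being a $\mathcal{F}$-diam-mean equicontinuity point of $(\hyper(X),T)$.

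There is essentially no obstacle, because the work has been done in Lemmas~\ref{lem:almostDMEHX_I}--\ref{lem:almostDMEHX_III}. The only point to check is that the balls match. In Lemma~\ref{lem:almostDMEHX_I} the hypothesis $A\in\ball_\delta(E)$ is used through $A\subseteq\ball_\delta[E]$ and $E\subseteq\ball_\delta[A]$, which is precisely membership in the closed Hausdorff ball. The same $\delta$ also appears in the balls $\ball_\delta(x)$ of $X$, so choosing a single common $\delta$ for all points of $E$ is what makes the estimate apply. Finiteness of $E$ is essential here, since it keeps the sum in Lemma~\ref{lem:almostDMEHX_III} finite, so that $\epsilon/(2m)$ is a legitimate choice.
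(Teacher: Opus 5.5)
Your proof is correct and is essentially the paper's argument: choose one $\delta>0$ with $\Diam_\mathcal{F}(\ball_\delta(x))$ small for every $x\in E$, then apply Lemma~\ref{lem:almostDMEHX_III}. Your strict bound $<\epsilon/(2|E|)$ even gives the final strict inequality $<\epsilon$ cleanly, whereas the paper's choice of $\leq \epsilon/(2|E|)$ only yields $\leq\epsilon$ as written.
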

\begin{proof}
    Let $\epsilon>0$ and choose $\delta>0$ such that for all $x\in E$ we have $\Diam_\mathcal{F}(\ball_\delta(x))\leq \epsilon/(2|E|)$. 
    From Lemma \ref{lem:almostDMEHX_III} we observe 
    \begin{align*}
        \Diam_\mathcal{F}(\ball_\delta(E))
        &\leq 2\sum_{x\in E}\Diam_\mathcal{F}(\ball_\delta(x))
        <\epsilon.   
        \qedhere
    \end{align*}
\end{proof}

Note that whenever $D\subseteq X$ is dense, then the set of all finite subsets of $D$ is dense in $\hyper(X)$. We conclude the following. 

\begin{corollary}
\label{cor:HXalmostDME}
    If $(X,T)$ is an almost $\mathcal{F}$-diam-mean equicontinuous dynamical system, then also $(\hyper(X),T)$ is almost $\mathcal{F}$-diam-mean equicontinuous. 
\end{corollary}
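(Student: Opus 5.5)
The plan is to combine the preceding proposition (finite sets consisting of $\mathcal{F}$-diam-mean equicontinuity points are $\mathcal{F}$-diam-mean equicontinuity points of $(\hyper(X),T)$) with a density argument in the Hausdorff metric. Let $D\subseteq X$ denote the set of $\mathcal{F}$-diam-mean equicontinuity points of $(X,T)$. By the assumption of almost $\mathcal{F}$-diam-mean equicontinuity, $D$ is dense in $X$. Every finite $E\subseteq D$ is a closed non-empty subset, i.e.\ $E\in\hyper(X)$, and consists of $\mathcal{F}$-diam-mean equicontinuity points. By the preceding proposition, $E$ is therefore an $\mathcal{F}$-diam-mean equicontinuity point of $(\hyper(X),T)$.

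It then remains to verify that the family of finite subsets of $D$ is dense in $(\hyper(X),d)$. Let $A\in\hyper(X)$ and $\epsilon>0$. By compactness of $A$ there is a finite set $F\subseteq A$ with $A\subseteq\ball_{\epsilon/2}[F]$. By density of $D$, for each $x\in F$ we choose $y_x\in D$ with $d(x,y_x)\leq \epsilon/2$, and we set $E:=\{y_x;\,x\in F\}$.

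We then check the two Hausdorff inclusions. Each $a\in A$ lies within $\epsilon/2$ of some $x\in F$, and hence within $\epsilon$ of $y_x$, so $A\subseteq\ball_\epsilon[E]$. Conversely, each $y_x$ lies within $\epsilon/2$ of $x\in A$, so $E\subseteq\ball_\epsilon[A]$. By the definition of the Hausdorff metric this gives $d(A,E)\leq\epsilon$. Consequently the $\mathcal{F}$-diam-mean equicontinuity points of $(\hyper(X),T)$ are dense in $\hyper(X)$, which is exactly the claim.

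There is no real obstacle, since all the dynamical content sits in the preceding proposition. The only point needing care is that finite sets are closed, so that $E$ really belongs to $\hyper(X)$.
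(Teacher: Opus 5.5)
Your proposal is correct and follows the paper's argument: it combines the preceding proposition (finite sets of $\mathcal{F}$-diam-mean equicontinuity points are $\mathcal{F}$-diam-mean equicontinuity points of $(\hyper(X),T)$) with the density of finite subsets of a dense $D\subseteq X$ in $\hyper(X)$. The only difference is that you verify this density explicitly, while the paper just states it.
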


\begin{remark}
    From our previous discussion it follows that for a Denjoy system $(\mathbb{T},T)$ we have that $(\hyper(\mathbb{T}),T)$ is an almost $\mathcal{F}$-diam-mean equicontinuous system with infinite entropy. 
\end{remark}

\begin{remark}
    It remains an open question, whether the converse holds in the Corollaries \ref{cor:MXalmostDME} and \ref{cor:HXalmostDME}. 
\end{remark}

\section{Actions of locally compact and $\sigma$-compact amenable groups}\label{sec:groups}

The notions of weak-mean equicontinuity, mean equicontinuity and diam-mean equicontinuity have been extensively studied in the context of group actions \cite[Section~8]{li2021meanEquicontinuity}. See also 
\cite{fuhrmann2022structure, xu2024weak, fuhrmann2025continuity, hauser2025meandiameterregularitydiammean} and the references within. We next provide some background and present that the results presented so far can also be obtained in this setting. 

Let $G$ be a locally compact and $\sigma$-compact group and choose a Haar measure $\haar{\cdot}=\int (\cdot)\dd g$ on $G$. 
A sequence $(F_n)_{n\in \mathbb{N}}$ of compact subsets of $G$ with $\haar{F_n}>0$ is called \emph{(left) F{\o}lner} if for all non-empty compact subsets $K\subseteq G$ we have 
\[\lim_{n\to \infty} \haar{KF_n\Delta F_n}/\haar{F_n}\to 0.\]
$G$ is called \emph{amenable} if it allows for a F{\o}lner sequence.


Let $X$ be a compact metric space. An \emph{action} of $G$ on $X$ is a group homomorphism $\alpha$ from $G$ into the group of homeomorphisms on $X$ such that the induced map $G\times X\to X, ~(g,x)\mapsto gx:=\alpha(g)(x)$ is continuous.
In the following we denote $(X,G)$ for an action and keep the group homomorphism implicit. 
We recommend \cite{auslander1988minimal} for a detailed exposition on actions. 



Let $(X,G)$ be an action.
Identifying $g$ with the homeomorphism $X\to X$ allows to write $g(x,x'):=(gx,gx')$,  $g^*f:=f\circ g$, $g\mu:=g_*\mu$, $gA:=g(A)$ for $(x,x')\in X^2$, $f\in C(X)$, $\mu\in \myper(X)$ and $A\in \hyper(X)$. 
The respective maps $G\times X^2\to X^2$, $G\times \myper(X)\to \myper(X)$ and $G\times \hyper(X)\to \hyper(X)$ are also actions, for which we denote $(\myper(X),G)$ and $(\hyper(X),G)$. 
We call $A\in \hyper(X)$ \emph{invariant} if $gA=A$ holds for all $g\in G$. 
Note that an invariant $A\in \hyper(X)$ allows for a restriction of the action of $G$. The respective action $(A,G)$ is called a \emph{subaction of $(X,G)$}. 

\subsection{$\mathcal{F}$-(diam-/weak-)mean equicontinuity}
Consider $f\in C(X)$ and $\mu\in \myper(X)$. 
For $F\subseteq G$ compact with $\haar{F}>0$ we denote 
\[F^*f:=\frac{1}{\haar{F}}\int_F (g^*f)\dd g
\hspace{1cm}\text{and}\hspace{1cm}
F_*\mu:=\frac{1}{\haar{F}}\int_F (g.\mu) \dd g\]
for the respective \emph{Cesàro averages}. 
We have  
$F^*f\in C(X)$, 
$F_*\mu\in \myper(X)$ and 
$(F_*\mu)(f)=\mu(F^*f)$. 

For further reading on the following notions see Subsection \ref{subsec:diamWeakMeanEquicontinuity}, \cite{fuhrmann2022structure, 
fuhrmann2025continuity} 
and the references within. 
Let $\mathcal{F}=(F_n)_{n\in \mathbb{N}}$ be a F\o lner sequence in $G$. The \emph{$\mathcal{F}$-mean-orbital pseudometric} is defined by  
\begin{align*}
    \meanorbPM_\mathcal{F}(x,y):=\limsup_{n\to \infty}d((F_n)_*\delta_x,(F_n)_*\delta_y). 
\end{align*}
Note that $d\in C(X^2)$ and consider the action $(X^2,G)$. 
The \emph{$\mathcal{F}$-mean pseudo-metric} is defined by 
\begin{align*}
    \meanPM_\mathcal{F}(x,y):=\limsup_{n\to \infty}(F_n^*d)(x,y). 
\end{align*}
Note that $\diam\in C(\hyper(X))$ and consider the action $(\hyper(X),G)$. 
For $A\in \hyper(X)$ the \emph{$\mathcal{F}$-mean diameter} is defined by 
\begin{align*}
    \Diam_\mathcal{F}(A):=\limsup_{n\to \infty}(F_n^*\diam)(A). 
\end{align*}

\begin{lemma}
\label{lem:meanPMDiam_relations}
    For $x,y\in X$ we have 
    $
        \meanorbPM_\mathcal{F}(x,y)
        \leq \meanPM_\mathcal{F}(x,y)
        =\Diam_\mathcal{F}(\{x,y\})
    $.
\end{lemma}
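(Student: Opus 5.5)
The inequality $\meanorbPM_\mathcal{F}(x,y)\leq \meanPM_\mathcal{F}(x,y)$ and the equality $\meanPM_\mathcal{F}(x,y)=\Diam_\mathcal{F}(\{x,y\})$ will be handled separately. For both, I first prove the corresponding statement for a single compact set $F\subseteq G$ with $\haar{F}>0$. Afterwards I take the limit superior along the Følner sequence $\mathcal{F}=(F_n)_{n\in\mathbb{N}}$. No Birkhoff--von Neumann permutation argument is needed: a direct coupling argument replaces it and works for arbitrary amenable $G$.

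For the equality, note that $\diam(g\{x,y\})=\diam(\{gx,gy\})=d(gx,gy)=(g^*d)(x,y)$ for every $g\in G$. Hence $g^*\diam(\{x,y\})=(g^*d)(x,y)$ pointwise in $g$. Averaging over $F$ gives $(F^*\diam)(\{x,y\})=(F^*d)(x,y)$. Taking $\limsup_{n\to\infty}$ along $\mathcal{F}$ yields $\Diam_\mathcal{F}(\{x,y\})=\meanPM_\mathcal{F}(x,y)$.

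For the inequality, I use the measure $\kappa_F:=\frac{1}{\haar{F}}\int_F \delta_{(gx,gy)}\dd g$ on $X^2$, defined for instance as the push forward of the normalized Haar measure on $F$ under the continuous map $g\mapsto(gx,gy)$. Its marginals are $\pi^{(1)}_*\kappa_F=F_*\delta_x$ and $\pi^{(2)}_*\kappa_F=F_*\delta_y$. This is checked by integrating any $f\in C(X)$ and applying Fubini to the continuous integrand $(g,\cdot)\mapsto f(g\cdot)$. So $\kappa_F\in\Pi(F_*\delta_x,F_*\delta_y)$, and the definition of the Wasserstein metric gives
\[d(F_*\delta_x,F_*\delta_y)\leq \kappa_F(d)=\frac{1}{\haar{F}}\int_F d(gx,gy)\dd g=(F^*d)(x,y).\]
Alternatively, I could use Remark~\ref{rem:preliminaries_WassersteinLipschitz}: for $\lip{f}\leq 1$ we have $|F_*\delta_x(f)-F_*\delta_y(f)|\leq \frac{1}{\haar{F}}\int_F|f(gx)-f(gy)|\dd g\leq (F^*d)(x,y)$. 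Either version, applied to $F=F_n$ followed by $\limsup_{n\to\infty}$, gives $\meanorbPM_\mathcal{F}(x,y)\leq\meanPM_\mathcal{F}(x,y)$.

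The only point needing care is the measure-theoretic bookkeeping: that $\kappa_F$ is a well-defined Borel probability measure with the claimed marginals, or equivalently that the integral defining $F_*\mu$ is understood weakly. The continuity of the action makes every integrand continuous, so this reduces to Fubini on the compact set $F$. I expect no real obstacle.
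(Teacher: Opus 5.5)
Your proposal is correct and takes the same route as the paper. The paper likewise treats $\meanPM_\mathcal{F}(x,y)=\Diam_\mathcal{F}(\{x,y\})$ as immediate. It then bounds the Wasserstein distance using the coupling $(F_n)_*\delta_{(x,y)}\in\Pi((F_n)_*\delta_x,(F_n)_*\delta_y)$, which is exactly your $\kappa_{F_n}$, and takes the limit superior.
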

\begin{proof}
    It is straightforward to observe 
    $\meanPM_\mathcal{F}(x,y)=\Diam_\mathcal{F}(\{x,y\})$. 
    For $n\in \mathbb{N}$ we have $(F_n)_*\delta_{(x,y)}\in \Pi((F_n)_*\delta_x,(F_n)_*\delta_y)$ and the definition of the Wasserstein metric yields
    $
        d((F_n)_*\delta_{x},(F_n)_*\delta_{y})
        \leq (F_n)_*\delta_{(x,y)}(d)
        =F_n^*d(x,y).
    $
    Taking the limit superior along $n$ we observe 
    $\meanorbPM_\mathcal{F}(x,y) \leq \meanPM_\mathcal{F}(x,y)$. 
\end{proof}

Given a F\o lner sequence $\mathcal{F}$ in $G$ a point $x\in X$ is said to be a
\begin{itemize}
    \item 
    \emph{$\mathcal{F}$-weak-mean equicontinuity point} if for all $\epsilon>0$ there exists $\delta>0$ such that for all $y\in \ball_\delta(x)$ we have $\meanorbPM_\mathcal{F}(x,y)<\epsilon$. 
    \item 
    \emph{$\mathcal{F}$-mean equicontinuity point} if for all $\epsilon>0$ there exists $\delta>0$ such that for all $y\in \ball_\delta(x)$ we have $\meanPM_\mathcal{F}(x,y)<\epsilon$.
    \item 
    \emph{$\mathcal{F}$-diam-mean equicontinuity point} if for all $\epsilon>0$ there exists $\delta>0$ such that $\Diam_\mathcal{F}(\ball_\delta(x))<\epsilon$. 
\end{itemize}
$(X,G)$ is called $\mathcal{F}$-weakly-mean equicontinuous if all $x\in X$ are $\mathcal{F}$-weak-mean equicontinuity points. Similarly, we define the notion of $\mathcal{F}$-mean equcontinuity and $\mathcal{F}$-diam-mean equicontinuity for an action.
An action $(X,G)$ is called \emph{almost $\mathcal{F}$-diam-mean equicontinuous} if the set of all $\mathcal{F}$-diam-mean equicontinuity points is dense in $X$. 
Using the Fubini theorem it is straightforward to adapt the respective proofs above to obtain the following. 

\begin{theorem}
\label{the:mainTheoremForActionsFolnerDep}
    Let $(X,G)$ be an action and $\mathcal{F}$ a F\o lner sequence in $G$. 
    \begin{enumerate}
    
        \item 
        The following statements are equivalent.
        \begin{itemize}
            \item[(i)] $(X,G)$ is $\mathcal{F}$-mean equicontinuous.
            \item[(ii)] $(\myper(X),G)$ is $\mathcal{F}$-mean equicontinuous. 
            \item[(iii)] $(\myper(X),G)$ is $\mathcal{F}$-weakly-mean equicontinuous. 
        \end{itemize}
        
        \item 
        The following statements are equivalent.
        \begin{itemize}
            \item[(i)] $(X,G)$ is $\mathcal{F}$-diam-mean equicontinuous.
            \item[(ii)] $(\myper(X),G)$ is $\mathcal{F}$-diam-mean equicontinuous. 
        \end{itemize}
        
        \item 
        The following statements are equivalent.
        \begin{itemize}
            \item[(i)] $(\hyper(X),G)$ is $\mathcal{F}$-diam-mean equicontinuous.
            \item[(ii)] $(\hyper(X),G)$ is $\mathcal{F}$-mean equicontinuous. 
            \item[(iii)] $(\hyper(X),G)$ is $\mathcal{F}$-weakly-mean equicontinuous. 
        \end{itemize}
        \item 
        If $(\hyper(X),G)$ is $\mathcal{F}$-diam-mean equicontinuous, then $(X,G)$ is $\mathcal{F}$-diam-mean equicontinuous. 

        \item 
        If $(X,G)$ is almost $\mathcal{F}$-diam-mean equicontinuous, then also $(\myper(X),G)$ and $(\hyper(X),G)$ are almost $\mathcal{F}$-diam-mean equicontinuous. 
    \end{enumerate}
\end{theorem}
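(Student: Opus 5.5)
The plan is to re-run the $\mathbb{Z}$-proofs of Sections~\ref{sec:inducedDynamicsMeasures}--\ref{sec:almostDME} with the sums $\frac1n\sum_{k=0}^{n-1}(T^k)^*$ replaced by the Haar averages $F_n^*=\frac{1}{\haar{F_n}}\int_{F_n} g^*\dd g$. A useful feature of the argument is that every single-step lemma is a pointwise inequality for one map $T$. Each such lemma therefore applies verbatim to the single homeomorphism $g$, for every $g\in G$. This covers Lemmas~\ref{lem:MX_meanPseudometricPreparationsI}, \ref{lem:meanDiameterPreparationsIII}, \ref{lem:HX_PseudometricControlPreparationsI}, \ref{lem:meanDiameterControlAtPointI}, \ref{lem:hyper_diameterControlIII} and \ref{lem:almostDMEHX_II}. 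Lemma~\ref{lem:deltaIndependenceOfNotions} also transfers unchanged, since it only uses compactness and the triangle inequality for $\meanorbPM_\mathcal{F}$, $\meanPM_\mathcal{F}$ and $\Diam_\mathcal{F}$.

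The averaging step is handled as follows. Wherever the paper sums over $k$, I will integrate the $g$-inequality over $F_n$ and divide by $\haar{F_n}$. Let me illustrate with Lemma~\ref{lem:MX_meanPseudometricPreparationsII}. The $g$-version of Lemma~\ref{lem:MX_meanPseudometricPreparationsI} gives $(g^*d)(\mu,\nu)\le\int_{X^2}(g^*d)\dd\kappa$. Integrating over $g\in F_n$ and applying Fubini, which is justified because $(g,x,x')\mapsto d(gx,gx')$ is continuous and bounded, we obtain $(F_n^*d)(\mu,\nu)\le\int (F_n^*d)\dd\kappa$. Fatou's lemma then gives the bound by $\int\meanPM_\mathcal{F}\dd\kappa$.

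The same scheme gives the group analogues of Lemmas~\ref{lem:meanDiameterPreparationsIV}, \ref{lem:hyperWeylVsMeanOrb}, \ref{lem:meanDiameterControlAtPointII}, \ref{lem:hyper_meanDiameterControlV} and \ref{lem:almostDMEHX_III}. For Lemma~\ref{lem:meanDiameterPreparationsIV}, Fubini needs joint measurability of $(g,x)\mapsto \diam(g\ball_\epsilon(x))$. This holds because the map is continuous: $x\mapsto\ball_\epsilon(x)$ need not be continuous in the Hausdorff metric, so I will instead argue upper semicontinuity, which suffices for Borel measurability. For Lemma~\ref{lem:hyperWeylVsMeanOrb} I need the identity $\delta_{gA}(\phi)=(g\delta_A)(\phi)$. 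Averaging it gives $(F_n)_*\delta_A(\phi)$, and $\lip{\phi}\le 1$ together with Remark~\ref{rem:preliminaries_WassersteinLipschitz} bounds the difference by the Wasserstein distance. With these lemmas in place, Propositions~\ref{pro:supportAndMeanEquicontinuousPoints}, \ref{pro:supportAndDiamMeanEquicontinuousPoints}, \ref{pro:hyper_HXwmeImpliesXdme} and \ref{pro:hyper_pointwiseDMEvsMEvsWME}, and the finite-set proposition of Section~\ref{sec:almostDME}, go through word for word. Their proofs only combine the averaged lemmas with $\epsilon$--$\delta$ bookkeeping.

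It remains to assemble the five items.
\begin{enumerate}
\item[(1)] The forward direction follows from the analogue of Proposition~\ref{pro:supportAndMeanEquicontinuousPoints}, and (ii)$\Rightarrow$(iii) from Lemma~\ref{lem:meanPMDiam_relations}. For (iii)$\Rightarrow$(i), the embedding $(x,x')\mapsto\frac13\delta_x+\frac23\delta_{x'}$ is still $G$-equivariant. So $(X^2,G)$ is $\mathcal{F}$-weakly mean equicontinuous, and Remark~\ref{rem:WMEProduct}, which in \cite{fuhrmann2022structure, xu2024weak} is proved precisely for amenable group actions, gives the mean equicontinuity of $(X,G)$.
\item[(2)] This follows from the analogue of Proposition~\ref{pro:supportAndDiamMeanEquicontinuousPoints} together with the subsystem $x\mapsto\delta_x$.
\item[(3)] This follows from the analogues of Propositions~\ref{pro:hyper_HXwmeImpliesXdme} and \ref{pro:hyper_pointwiseDMEvsMEvsWME}.
\item[(4)] This follows from the subsystem $x\mapsto\{x\}$.
\item[(5)] This follows from the density arguments of Corollaries~\ref{cor:MXalmostDME} and \ref{cor:HXalmostDME}.
\end{enumerate}

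I expect the main obstacle to be Remark~\ref{rem:WMEProduct} in item (1), because it is imported rather than reproved. I would check that the cited group-action version uses exactly the $\mathcal{F}$-dependent notions defined here. The secondary issue is the measurability needed for Fubini in Lemma~\ref{lem:meanDiameterPreparationsIV}, which the semicontinuity argument above resolves.
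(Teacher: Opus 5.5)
Your proposal is correct and is essentially the paper's own argument. The paper simply asserts that the $\mathbb{Z}$-proofs adapt via Fubini, which amounts to what you describe: apply each single-map lemma to every $g\in G$, integrate over $F_n$, and import Remark~\ref{rem:WMEProduct} for item~(1), as you do.

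Your joint upper semicontinuity of $(g,x)\mapsto\diam(g\ball_\epsilon(x))$ is the right justification for measurability in the analogue of Lemma~\ref{lem:meanDiameterPreparationsIV}. The paper's claim that $x\mapsto(T^*\diam)(\ball_\epsilon(x))$ is continuous is too strong. Delete the contradictory phrase ``because the map is continuous'' from your write-up.
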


\subsection{(Diam-/weak-)mean equicontinuity}
Taking the supremum over all \linebreak 
F\o lner sequences $\mathcal{F}$ we define
the \emph{mean diameter} 
$\Diam:=\sup_{\mathcal{F}}\Diam_\mathcal{F}$, 
the \emph{mean pseudometric}
$D:=\sup_\mathcal{F}D_\mathcal{F}$, 
and the \emph{mean-orbital pseudometric}
$\meanorbPM:=\sup_\mathcal{F}\meanorbPM_\mathcal{F}$. 
As above we define the notions of an diam-mean equicontinuous point, a mean equicontinuous point and a weak-mean equicontinuous point via $\meanorbPM$, $\meanPM$, $\Diam$, as well as the respective notions for actions. We call an action \emph{almost diam-mean equicontinuous} if it allows for a dense set of diam-mean equicontinuity points. 

\begin{remark}
    In \cite{fuhrmann2025continuity, hauser2026maximal} it is shown that in general the notions of $\mathcal{F}$-mean equicontinuity and $\mathcal{F}$-weak-mean equicontinuity depend on the choice of a F\o lner sequence $\mathcal{F}$.
\end{remark}

\begin{remark}
    Whenever $G$ is Abelian or $(X,G)$ minimal, then $(X,G)$ is mean equicontinuous as soon as it is $\mathcal{F}$-mean equicontinuous for some F\o lner sequence $\mathcal{F}$ \cite[Section 5]{fuhrmann2022structure}. 
    Thus, for a homeomorphism $T\colon X\to X$ we observe that $(X,T)$ is mean equicontinuous (as defined in the introduction) if and only if the respective action $(X,\mathbb{Z})$ given by $kx:=T^k(x)$ is mean equicontinuous (as defined in this section). 
\end{remark}

\begin{remark}
    A similar statement holds for the concept of weak-mean equicontinuity \cite[Section 4]{xu2024weak}.
\end{remark}

\begin{remark}
    For \textbf{minimal} actions a similar statement holds for the concept of diam-mean equicontinuity \cite{garcia2021mean, haupt2025multivariate}. 
    However, it needs to be highlighted that for non-minimal homeomorphisms $T\colon X\to X$ a potential conflict of notation arises, since it is an open question whether the diam-mean equicontinuity of $(X,T)$ as defined in the introduction is equivalent to the diam-mean equicontinuity of the respective action $(X,\mathbb{Z})$ as defined in this section. 
    Note that the latter is also called \emph{Banach diam-mean equicontinuity} in the context of dynamical systems $(X,T)$ \cite{garcia2021mean}. 
\end{remark}

Recall from \cite[Proposition 3.3]{fuhrmann2025continuity} that an action is weakly mean equicontinuous if and only if it is $\mathcal{F}$-weakly mean equicontinuous for any F\o lner sequence $\mathcal{F}$.
A similar statement holds for the notions of mean-equicontinuity \cite[Lemma 2.10]{hauser2026maximal} and diam-mean equicontinuity \cite[Proposition 6.9]{hauser2025meandiameterregularitydiammean}. 
The proofs of the following point-wise statements are minor modifications of the respective proofs from the literature. We include the short argument for the convenience of the reader. 

\begin{lemma}
\label{lem:pointwiseFolnerForAll}
    Let $(X,G)$ be an action and $x\in X$. 
    \begin{itemize}
        \item[(i)] $x$ is a diam-mean equicontinuity point if and only if $x$ is a $\mathcal{F}$-diam-mean equicontinuity point for any F\o lner sequence $\mathcal{F}$ in $G$. 
        \item[(ii)] $x$ is a mean equicontinuity point if and only if $x$ is a $\mathcal{F}$-mean equicontinuity point for any F\o lner sequence $\mathcal{F}$ in $G$. 
        \item[(iii)] $x$ is a weak-mean equicontinuity point if and only if $x$ is a $\mathcal{F}$-weak-mean equicontinuity point for any F\o lner sequence $\mathcal{F}$ in $G$.
    \end{itemize}
\end{lemma}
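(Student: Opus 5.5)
The direction ``$\Rightarrow$'' is immediate in all three items, because $\Diam_\mathcal{F}\leq \Diam$, $\meanPM_\mathcal{F}\leq \meanPM$ and $\meanorbPM_\mathcal{F}\leq \meanorbPM$ pointwise. So the substance is ``$\Leftarrow$'', which I would prove by contraposition with a diagonal Følner construction. Fix once and for all an increasing sequence $(K_n)_{n\in\mathbb{N}}$ of compact subsets of $G$ such that every compact $K\subseteq G$ is contained in some $K_n$; this uses $\sigma$-compactness and local compactness. Call a compact set $F$ with $\haar{F}>0$ \emph{$n$-good} if $\haar{K_nF\Delta F}/\haar{F}<1/n$. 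Any sequence $(F_n)$ in which $F_n$ is $n$-good is Følner. Moreover, every Følner sequence has, for each $n$, $n$-good members arbitrarily far out.

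For (i), suppose $x$ is not a diam-mean equicontinuity point. Then there is $\epsilon>0$ with $\Diam(\ball_{1/n}(x))\geq\epsilon$ for all $n$. For each $n$ I pick a Følner sequence $\mathcal{F}^{(n)}$ with $\Diam_{\mathcal{F}^{(n)}}(\ball_{1/n}(x))>\epsilon/2$. Far out along it I find an $n$-good member $F_n$ with $(F_n^*\diam)(\ball_{1/n}(x))>\epsilon/2$. The resulting $\mathcal{F}:=(F_n)$ is Følner. The key observation is that $A\mapsto (g^*\diam)(A)=\diam(gA)$ is monotone under inclusion, and hence so is $F_n^*\diam$. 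Given any $\delta>0$, for all large $n$ we have $\ball_{1/n}(x)\subseteq\ball_\delta(x)$, and therefore
\[(F_n^*\diam)(\ball_\delta(x))\geq (F_n^*\diam)(\ball_{1/n}(x))>\epsilon/2 .\]
Thus $\Diam_\mathcal{F}(\ball_\delta(x))\geq\epsilon/2$ for every $\delta$, so $x$ is not an $\mathcal{F}$-diam-mean equicontinuity point.

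For (ii) and (iii) the same scheme gives $y_n\in\ball_{1/n}(x)$ and $n$-good $F_n$ with $(F_n^*d)(x,y_n)>\epsilon/2$, respectively $d((F_n)_*\delta_x,(F_n)_*\delta_{y_n})>\epsilon/2$. This is where I expect the main obstacle. There is no monotonicity in the second point, and the bad behaviour is tied to the pair $(F_n,y_n)$ rather than to a single point $y$. So $\limsup_m$ along $\mathcal{F}$ for a fixed $y$ need not be large.

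To overcome this I would exploit that, for each fixed compact $F$, the maps $y\mapsto (F^*d)(x,y)$ and $y\mapsto d(F_*\delta_x,F_*\delta_y)$ are continuous. I would then build the data inductively, aiming for a single point $y$ that is bad along infinitely many members. Step 1: use that $x$ is assumed to be an $\mathcal{F}'$-mean equicontinuity point for every Følner sequence $\mathcal{F}'$. Applying this to the sequence just constructed, and to its subsequences, should yield a \emph{uniform} version: for some $\delta>0$ and all large $m$, $\sup_{y\in\ball_\delta(x)}(F_m^*d)(x,y)<\epsilon/4$. Step 2: obtain this uniform version itself by contradiction, exactly as in Lemma~\ref{lem:deltaIndependenceOfNotions}. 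Pass to subsequences $m_k$ with bad $y_k\to x$, and use the triangle inequality with the continuity in $y$ at finitely many stages to transfer the badness to a common limit point. The uniform version then contradicts $(F_n^*d)(x,y_n)>\epsilon/2$ for large $n$.

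If this transfer step resists, my fallback is to follow the literature argument for the global statements \cite[Lemma 2.10]{hauser2026maximal} and \cite[Proposition 3.3]{fuhrmann2025continuity}. There one replaces $\sup_\mathcal{F}\limsup_n$ by the uniform expression $\inf_{n}\sup\{F^*(\cdot):F \text{ is } n\text{-good}\}$, which is decreasing in $n$. I would check that their proofs only ever use neighbourhoods of the single point $x$, so that they localize verbatim.
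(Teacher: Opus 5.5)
Your argument for (i) is correct and self-contained; the paper only cites \cite[Proposition~6.9]{hauser2025meandiameterregularitydiammean} there. Because $A\mapsto (F^*\diam)(A)$ is monotone under inclusion, one $n$-good member from each $\mathcal{F}^{(n)}$ is enough. (That a sequence of $n$-good sets is F{\o}lner needs the routine remark $\haar{KF\Delta F}\le 2\haar{K_nF\setminus F}$ for $K\subseteq K_n$, since $\haar{F\setminus kF}=\haar{kF\setminus F}$ for $k\in K$.)

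For (ii) and (iii) there is a genuine gap: Steps 1--2 cannot work. After your selection, the only information left is a F{\o}lner sequence $(F_n)$, points $y_n\to x$, and $(F_n^*d)(x,y_n)>\epsilon/2$. This is compatible with $x$ being an $\mathcal{F}'$-mean equicontinuity point for every F{\o}lner sequence $\mathcal{F}'$. Here is an example.

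\emph{Setup.} Let $\mathbb{Z}$ act by the shift $\sigma$ on the orbit closure $X\subseteq\{0,1,2\}^{\mathbb{Z}}$ of $x$ and all $y^{(k)}$. Here $x_i=2$ for $i<0$ and $x_i=0$ for $i\ge 0$, and $y^{(k)}$ differs from $x$ exactly on $\{k,\dots,2k-1\}$, where it equals $1$. Use $d(u,v)=2^{-\min\{|i|:\,u_i\neq v_i\}}$.

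\emph{Why $x$ is mean equicontinuous for every F{\o}lner sequence.} The only $z\in X$ with $z_{-1}=2$ and $z_0=0$ are $x$ and the $y^{(k)}$: such $z$ is a limit of $\sigma^jw$ with $w\in\{x,y^{(k)}\}$, and these two coordinates force $j=0$. Moreover $\sum_{j\in\mathbb{Z}}d(\sigma^jx,\sigma^jy^{(k)})<\infty$, and F{\o}lner sets in $\mathbb{Z}$ have cardinality tending to infinity. Hence $\meanPM_{\mathcal{F}'}(x,y)=0$ for all $y$ near $x$ and every F{\o}lner sequence $\mathcal{F}'$.

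\emph{Why your data still occurs.} We have $y^{(k)}\to x$ and $d(\sigma^jx,\sigma^jy^{(k)})=1$ for $k\le j<2k$. So $F_n=\{0,\dots,2k_n-1\}$ with $k_n$ growing fast reproduces exactly your data whenever $\epsilon/2<1/2$. The same holds for $\meanorbPM$, since $\sigma^jy^{(k)}$ with $k\le j<2k$ is at distance $1$ from every $\sigma^ix$ with $i\ge 0$.

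Hence the uniform bound of Step 1 is false in general. The transfer in Step 2 also has no target: the $y_k$ accumulate only at $x$, where $(F^*d)(x,x)=0$, and continuity of $y\mapsto (F^*d)(x,y)$ is not uniform in $F$.

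The missing idea is to keep the fact that $y_n$ is bad along \emph{infinitely many} members of $\mathcal{F}^{(n)}$, and to interleave rather than select one member. Fix a bijection $\beta\colon\mathbb{N}\to\mathbb{N}\times\mathbb{N}$. If $\beta(m)=(n,j)$, let $F_m$ be a member of $\mathcal{F}^{(n)}$ that is $m$-good and satisfies $(F_m^*d)(x,y_n)>\epsilon/2$ (resp.\ $d((F_m)_*\delta_x,(F_m)_*\delta_{y_n})>\epsilon/2$). Such a member exists: infinitely many members satisfy the second condition and all but finitely many satisfy the first. Then $\mathcal{F}=(F_m)$ is F{\o}lner, and for each fixed $n$ infinitely many $F_m$ are bad for $y_n$. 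Hence $\meanPM_\mathcal{F}(x,y_n)\ge\epsilon/2$ (resp.\ $\meanorbPM_\mathcal{F}(x,y_n)\ge\epsilon/2$) for all $n$, while $y_n\to x$.

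This is exactly the paper's proof, which quotes \cite[Proposition~2.1]{fuhrmann2025continuity} for a F{\o}lner sequence sharing a common subsequence with every $\mathcal{F}^{(n)}$. No single point that is bad infinitely often is needed. Your fallback points to the right literature, but as stated it is a deferral, not an argument. Moreover, the $\inf_n\sup$ reformulation alone does not remove the need for this interleaving step: it computes $\meanPM(x,y)$ for one fixed $y$, and you still need one F{\o}lner sequence serving all $y_n$ simultaneously.
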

\begin{proof}
(i): 
    See \cite[Proposition 6.9]{hauser2025meandiameterregularitydiammean}, where the full proof of the point-wise statement is presented. 

(ii):
    If $x$ is not a mean equicontinuity point, then there exist $\delta>0$ and 
    a sequence $(x_n)_{n\in \mathbb{N}}$ in $X$ with $x_n\to x$ and $D(x_n,x)\geq 2\delta$.
    For $n\in \mathbb{N}$ choose a F\o lner sequence $\mathcal{F}^{(n)}$ with 
    $D_{\mathcal{F}^{(n)}}(x_n,x)\geq \delta$. 
    W.l.o.g.\ we assume that $D_{\mathcal{F}^{(n)}}(x_n,x)=\lim_{k\to \infty}((F^{(n)}_k)^*d)(x_n,x)$. 
    By \cite[Proposition 2.1]{fuhrmann2025continuity} there exists a F\o lner sequence $\mathcal{F}$ that shares a common subsequence with all $\mathcal{F}^{(n)}$. 
    In particular, we have $D_\mathcal{F}(x_n,x)\geq D_{\mathcal{F}^{(n)}}(x_n,x)\geq \delta$. 
    Thus, $x$ is not a $\mathcal{F}$-mean equicontinuity point.

(iii): 
    Follows from a similar argument as (ii). 
\end{proof}

In particular, it follows that an action is almost diam-mean equicontinuous iff it is is almost $\mathcal{F}$-diam-mean equicontinuous w.r.t.\ any F\o lner sequence $\mathcal{F}$. 
Thus, combining Lemma \ref{lem:pointwiseFolnerForAll} with Theorem \ref{the:mainTheoremForActionsFolnerDep} we observe the following. 

\begin{corollary}
    Let $G$ be a locally compact $\sigma$-compact amenable group. 
    For an action $(X,G)$ the statements of Theorem \ref{the:mainTheoremForActionsFolnerDep} also hold w.r.t.\ the notion of (almost) diam-mean equicontinuity (for actions), mean equicontinuity and weak-mean equicontinuity. 
\end{corollary}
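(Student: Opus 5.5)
The plan is to reduce everything to Theorem~\ref{the:mainTheoremForActionsFolnerDep} by way of the pointwise characterisation in Lemma~\ref{lem:pointwiseFolnerForAll}. A property defined through the suprema $\Diam$, $\meanPM$, $\meanorbPM$ holds for an action exactly when the corresponding $\mathcal{F}$-property holds for every F\o lner sequence $\mathcal{F}$. Applying Lemma~\ref{lem:pointwiseFolnerForAll} at each point gives the global statements directly.
\begin{itemize}
\item $(X,G)$ is diam-mean, mean, or weakly-mean equicontinuous iff it is $\mathcal{F}$-diam-mean, $\mathcal{F}$-mean, or $\mathcal{F}$-weakly-mean equicontinuous for every $\mathcal{F}$.
\item The set of diam-mean equicontinuity points equals the intersection over all $\mathcal{F}$ of the sets of $\mathcal{F}$-diam-mean equicontinuity points.
\end{itemize}
Lemma~\ref{lem:pointwiseFolnerForAll} is stated for an arbitrary action, so it applies equally to $(\myper(X),G)$, $(\hyper(X),G)$ and $(X^2,G)$. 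These are again actions of $G$ on compact metric spaces.

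Items (1)--(4) then follow by quantifying over $\mathcal{F}$. For instance, $(\myper(X),G)$ is weakly-mean equicontinuous iff it is $\mathcal{F}$-weakly-mean equicontinuous for all $\mathcal{F}$. By Theorem~\ref{the:mainTheoremForActionsFolnerDep}(1), applied separately for each $\mathcal{F}$, this holds iff $(X,G)$ is $\mathcal{F}$-mean equicontinuous for all $\mathcal{F}$, i.e.\ iff $(X,G)$ is mean equicontinuous. Items (2), (3) and (4) are handled in exactly the same way. Each equivalence or implication holds for every fixed $\mathcal{F}$, so it survives taking the conjunction over all $\mathcal{F}$.

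The main obstacle is item (5). "Almost" is a density condition, and a countable or uncountable intersection of dense sets need not be dense. So one cannot simply intersect the statements of Theorem~\ref{the:mainTheoremForActionsFolnerDep}(5) over all $\mathcal{F}$. I will avoid this by working pointwise rather than with densities. The pointwise results behind (5) are Proposition~\ref{pro:supportAndDiamMeanEquicontinuousPoints} (a measure whose support consists of $\mathcal{F}$-dme points is an $\mathcal{F}$-dme point of $\myper(X)$) and the proposition on finite sets $E$. Let $D$ be the dense set of diam-mean equicontinuity points of $(X,G)$. By Lemma~\ref{lem:pointwiseFolnerForAll}(i), every point of $D$ is an $\mathcal{F}$-dme point for every $\mathcal{F}$.
\begin{itemize}
\item Take $\mu$ with finite support contained in $D$. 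For each $\mathcal{F}$, the group analogue of Proposition~\ref{pro:supportAndDiamMeanEquicontinuousPoints} makes $\mu$ an $\mathcal{F}$-dme point of $\myper(X)$. By Lemma~\ref{lem:pointwiseFolnerForAll}(i), applied to $(\myper(X),G)$, $\mu$ is then a dme point.
\item Take a finite $E\subseteq D$. The same argument, using the finite-set proposition, shows that $E$ is a dme point of $\hyper(X)$.
\end{itemize}
The density observations preceding Corollaries~\ref{cor:MXalmostDME} and~\ref{cor:HXalmostDME} show that such $\mu$ and such $E$ form dense subsets of $\myper(X)$ and $\hyper(X)$, which proves (5).

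The only remaining point is that the pointwise propositions transfer to group actions with a fixed $\mathcal{F}$. The text preceding Theorem~\ref{the:mainTheoremForActionsFolnerDep} asserts this, and I will take it as given; the proof of that theorem establishes exactly these pointwise versions via Fubini.
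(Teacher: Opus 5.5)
Your proof is correct. For items (1)--(4) it follows the paper's route: apply Lemma~\ref{lem:pointwiseFolnerForAll} pointwise to $(X,G)$, $(\myper(X),G)$ and $(\hyper(X),G)$, so that each global property becomes ``the $\mathcal{F}$-property for every F\o lner sequence $\mathcal{F}$'', and then quantify Theorem~\ref{the:mainTheoremForActionsFolnerDep} over $\mathcal{F}$.

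For item (5) you take a genuinely different route. The paper states, ``in particular'' from Lemma~\ref{lem:pointwiseFolnerForAll}, that an action is almost diam-mean equicontinuous iff it is almost $\mathcal{F}$-diam-mean equicontinuous for every $\mathcal{F}$. It then plugs this into Theorem~\ref{the:mainTheoremForActionsFolnerDep}(5). To transfer the conclusion back to $\myper(X)$ and $\hyper(X)$, that route needs the direction ``almost $\mathcal{F}$-dme for all $\mathcal{F}$ $\Rightarrow$ almost dme''. This is exactly the intersection-of-dense-sets issue you flag, and it does not follow from the pointwise lemma alone. It can be supplied by Baire's theorem, using two facts:
\begin{itemize}
\item the set $\{x;\ \exists\delta>0:\ \Diam(\ball_\delta(x))<\epsilon\}$ is open;
\item the common-subsequence F\o lner construction of \cite{fuhrmann2025continuity}, applied to countably many balls $\ball_{1/m}(x_j)$ around a countable dense set, shows that a non-dense such set yields a single $\mathcal{F}$ with no $\mathcal{F}$-dme points in some open set.
\end{itemize}

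Your pointwise argument avoids this step entirely. You use only three ingredients: the easy direction of Lemma~\ref{lem:pointwiseFolnerForAll}(i) on $X$; the fixed-$\mathcal{F}$ pointwise propositions, on which the paper's Theorem~\ref{the:mainTheoremForActionsFolnerDep}(5) rests anyway; and Lemma~\ref{lem:pointwiseFolnerForAll}(i) on $\myper(X)$ and $\hyper(X)$, evaluated at finitely supported measures and finite sets in the dense set $D$, whose density the paper already establishes. So your route is more self-contained. The paper's route, once the missing step is filled in, gives the additional standalone fact that almost diam-mean equicontinuity coincides with almost $\mathcal{F}$-diam-mean equicontinuity for every $\mathcal{F}$.
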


\appendix
\refstepcounter{section}
\section*{Appendix: Weak mixing of $\myper(X)$}

A dynamical system $(X,T)$ is called \emph{(topological) transitive} if for all non-empty and open $U,V\subseteq X$ there exists $k\in \mathbb{N}$ such that $T^kU\cap V\neq \emptyset$. 
It is called \emph{weakly (topologically) mixing} if $(X^2,T)$ is transitive, i.e.\ if for open and non-empty $U_1,U_2,V_1,V_2\subseteq X$ there exists $k\in \mathbb{N}$ with $T^k V_i\cap U_i\neq \emptyset$ for $i=1,2$. Note that any weakly mixing dynamical system is transitive. 
A well known result of Furstenberg \cite[Proposition II.3]{furstenber1967disjointness} shows that if $(X,T)$ is weakly mixing, then for all $n\in \mathbb{N}$ also $(X^n,T)$ is weakly mixing.

It was shown in 
\cite{bauer1975topological} 
and
\cite{banks2005chaos} 
that $(X,T)$ is weakly mixing, iff $(\hyper(X),T)$ is transitive. 
Furthermore, it was shown in 
\cite{bauer1975topological}
that $(X,T)$ is weakly mixing, iff $(\myper(X),T)$ is weakly mixing. 
We next show that on $(\myper(X),T)$ weak mixing and transitivity are equivalent. For this we need the following lemma. 

\begin{lemma}\cite[Lemma 5]{banks1999topological}
\label{lem:weakMixingAlaBanks}
    A dynamical system $(X,T)$ is weakly mixing if and only if for all open and non-empty $U,V_1,V_2\subseteq G$ there exists $k\in \mathbb{N}$ with $T^kU\cap V_i\neq \emptyset$ for $i=1,2$.  
\end{lemma}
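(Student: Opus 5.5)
The plan is to prove both implications directly, using Furstenberg's result that weak mixing passes to all finite products $(X^n,T)$.

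The direction ``$\Rightarrow$'' needs no more than the definition. If $(X,T)$ is weakly mixing and $U,V_1,V_2$ are open and non-empty, use $U_1=U_2=U$ in the definition. This gives $k$ with $T^kU\cap V_i\neq\emptyset$ for $i=1,2$.

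For ``$\Leftarrow$'' I will use the following reduction, which I expect to be the main step. Fix non-empty open $U_1,U_2,V_1,V_2$. Pick points $u_i\in U_i$ and $v_i\in V_i$, and a small $\epsilon>0$ with $\ball_\epsilon(u_i)\subseteq U_i$ and $\ball_\epsilon(v_i)\subseteq V_i$.

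The idea is to encode the pair $(u_1,u_2)$ in a single point of a finite power. Weak mixing of $(X^n,T)$ for every $n$ lets us use one open set in $X^4$ via the hypothesis applied to $(X^4,T)$ (which is weakly mixing by Furstenberg, since $(X,T)$ is). Alternatively, I avoid Furstenberg and prove it by hand.

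Set $U:=U_1$, $V_1':=V_1$, $V_2':=T^{-j}V_2$. This does not match the pairing by itself, so I will use the standard Furstenberg-style trick. The first application of the hypothesis gives $m$ with $T^mU_1\cap U_2\neq\emptyset$. Then the set $W:=U_1\cap T^{-m}U_2$ is open and non-empty.

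Next apply the hypothesis to $W$ and the two targets $V_1$ and $T^{-m}V_2$. Here I need $T^{-m}V_2$ to be open and non-empty, which holds by continuity and surjectivity of $T$. This gives $k$ and points $w,w'\in W$ with $T^kw\in V_1$ and $T^kw'\in T^{-m}V_2$.

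This is the delicate point: $w'\in T^{-m}U_2$ only gives $T^{k}(T^m w')\in V_2$ with $T^mw'\in U_2$ if the orders commute, that is, $T^{k+m}w'=T^m T^k w'$. Since all maps are powers of $T$ this commutation is automatic. So $T^mw'\in U_2$ and $T^k(T^mw')=T^m(T^kw')\in V_2$.

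Also $w\in U_1$ and $T^kw\in V_1$. Hence $T^kU_i\cap V_i\neq\emptyset$ for $i=1,2$, which is the definition of weak mixing of $(X,T)$.

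The only step requiring care is choosing the intermediate set $W$ and the shifted target $T^{-m}V_2$ so that one common time $k$ serves both pairs. Everything else is routine bookkeeping with preimages under continuous surjections.
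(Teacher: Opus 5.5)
Your argument is correct. For ($\Rightarrow$), the definition with the two source sets taken equal is all that is needed. For ($\Leftarrow$), the steps are as follows:
\begin{itemize}
\item Choose $m$ with $W:=U_1\cap T^{-m}U_2\neq\emptyset$. This $m$ comes from the hypothesis with both targets equal to $U_2$.
\item The set $T^{-m}V_2$ is open, and it is non-empty because $T$ is surjective.
\item Applying the hypothesis to $W$ with the targets $V_1$ and $T^{-m}V_2$ gives one $k$ and points $w,w'\in W$.
\item The point $w\in U_1$ satisfies $T^kw\in V_1$.
\item The point $y:=T^mw'\in U_2$ satisfies $T^ky=T^m(T^kw')\in V_2$.
\end{itemize}

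The paper gives no proof of this lemma; it only quotes \cite[Lemma 5]{banks1999topological}. So there is no internal argument to compare against, and your intermediate-set construction is a complete, self-contained proof of the standard kind.

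Two things should be removed in a write-up:
\begin{itemize}
\item The opening appeal to Furstenberg's theorem and to the hypothesis on $(X^4,T)$. In the ($\Leftarrow$) direction, weak mixing of $(X,T)$ is exactly what you are proving, so invoking weak mixing of its powers would be circular. Your final argument does not use it.
\item The points $u_i,v_i$, the radius $\epsilon$ and the set $T^{-j}V_2$. None of these is ever used, and they obscure the two-step argument that actually does the work.
\end{itemize}
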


\begin{proposition}
\label{pro:weakMixingForMX}
    For a dynamical system $(X,T)$ the following statements are equivalent. 
    \begin{itemize}
        \item[(i)] $(X,T)$ is weakly mixing. 
        \item[(ii)] $(\mathcal{M}(X),T)$ is weakly mixing. 
        \item[(iii)] $(\mathcal{M}(X),T)$ is transitive. 
    \end{itemize}
\end{proposition}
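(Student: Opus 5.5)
We prove the cycle (i)$\Rightarrow$(ii)$\Rightarrow$(iii)$\Rightarrow$(i). The step (i)$\Rightarrow$(ii) is the result of \cite{bauer1975topological} recalled above. The step (ii)$\Rightarrow$(iii) holds because every weakly mixing system is transitive. The real work is (iii)$\Rightarrow$(i).

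For (iii)$\Rightarrow$(i) we verify the criterion of Lemma~\ref{lem:weakMixingAlaBanks}. Let $U,V_1,V_2\subseteq X$ be open and non-empty. Pick $u\in U$ and $v_i\in V_i$, and set
\[\mathcal{U}:=\{\mu\in\myper(X);\,\mu(U)>1-\eta\},\qquad \mathcal{V}:=\{\mu\in\myper(X);\,\mu(V_1)>\tfrac12-\eta,\ \mu(V_2)>\tfrac12-\eta\}\]
for a small $\eta>0$ (say $\eta=1/8$). By the portmanteau theorem, sets of the form $\{\mu;\,\mu(W)>c\}$ with $W$ open are weak*-open. Moreover $\delta_u\in\mathcal{U}$ and $\tfrac12\delta_{v_1}+\tfrac12\delta_{v_2}\in\mathcal{V}$, so both sets are non-empty and open. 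Transitivity of $(\myper(X),T)$ gives $k\in\mathbb{N}$ and $\mu\in\mathcal{U}$ with $T^k\mu\in\mathcal{V}$, that is, $\mu(T^{-k}V_i)>\tfrac12-\eta$ for $i=1,2$.

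Since $\mu(U)>1-\eta$, we get $\mu(U\cap T^{-k}V_i)\geq \mu(T^{-k}V_i)-\mu(X\setminus U)>\tfrac12-2\eta>0$. Hence there is $x_i\in U$ with $T^kx_i\in V_i$, so $T^kU\cap V_i\neq\emptyset$ for $i=1,2$. Lemma~\ref{lem:weakMixingAlaBanks} then yields weak mixing of $(X,T)$.

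The main obstacle is only to make sure the sets $\mathcal{U},\mathcal{V}$ are genuinely open in the weak* topology. Lower semicontinuity of $\mu\mapsto\mu(W)$ for open $W$ handles this. Note that the constants must be chosen so that $\tfrac12-2\eta>0$. Finally, the transitivity definition uses $T^k\mathcal{U}\cap\mathcal{V}\neq\emptyset$, which is exactly what we used, since $T^k\mathcal{U}$ here is the image under the push-forward map.
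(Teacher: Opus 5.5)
Your proof is correct and matches the paper's argument. For (iii)$\Rightarrow$(i) you use the same open sets of measures, defined by lower bounds on $\mu(U)$ and $\mu(V_i)$ (the paper takes $3/4$ and $1/3$ where you take $7/8$ and $3/8$), then transitivity and the same overlap estimate feeding into Lemma~\ref{lem:weakMixingAlaBanks}. The only difference is in (i)$\Rightarrow$(ii): you cite Bauer--Sigmund, while the paper credits them but also gives a self-contained argument via the equivariant maps $X^{2^i}\to\mathcal{M}(X)$, $(x_1,\dots,x_{2^i})\mapsto 2^{-i}\sum_j\delta_{x_j}$, Furstenberg's theorem that $(X^n,T)$ is weakly mixing, and density of the union of their images in $\mathcal{M}(X)$.
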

\begin{remark}
    The equivalence of (i) and (ii) was shown in \cite{bauer1975topological}. 
    We include a short and slightly different proof for (i)$\Rightarrow$(ii) for the convenience of the reader. 
\end{remark}
\begin{proof}
(i)$\Rightarrow$(ii):    
    For $i\in \mathbb{N}$ denote $X_i:=X^{2^i}$ 
    and consider
    $\phi_i\colon X_i\to \mathcal{M}(X)$
    given by $(x_1,\dots, x_{2^i})\mapsto \frac{1}{2^i} \sum_{j=1}^{2^i} \delta_{x_j}$. 
    Clearly, $\phi_i$ is continuous and equivariant and hence a factor map onto its image $M_i:=\phi_k(X_i)$. 
    Note that by our assumption $(X^{2^i},T)$ is weakly mixing and hence also its factor $(M_i,T)$ is weakly mixing and that $M_i\subseteq M_{i+1}$ for $i\in \mathbb{N}$. 
    Since $M:=\bigcup_{i\in \mathbb{N}}M_i$ is dense in the convex hull of $\{\delta_x;\, x\in X\}$ it follows from a well known application of the Krein-Milman theorem that 
    $M$ is dense in $\mathcal{M}(X)$.

    Consider $n\in \mathbb{N}$. 
    For open and non-empty subsets $\mathfrak{U}_1,\mathfrak{U}_2,\mathfrak{V}_1,\mathfrak{V}_2\subseteq \mathcal{M}(X)$ we find $i\in \mathbb{N}$ with $U_j:=M_i\cap \mathfrak{U}_j\neq \emptyset$ and $V_j:=M_i\cap \mathfrak{V}_j\neq \emptyset$ for $j=1,2$. 
    Since $(M_i,T)$ is weakly mixing there exists $k\in \mathbb{N}$ with $\emptyset\neq T^k U_j\cap V_j\subseteq T^k\mathcal{U}_j\cap \mathcal{V}_j$ for $j=1,2$. 
    This shows $(\mathcal{M}(X),T)$ to be weakly mixing. 

(ii)$\Rightarrow$(iii):
    Trivial. 

(iii)$\Rightarrow$(i): 
    To show that $(X,T)$ is weakly mixing we will use Lemma \ref{lem:weakMixingAlaBanks} and consider non-empty open sets $U,V_1,V_2\subseteq X$. 
    Denote $\mathfrak{U}$ for the set of all $\mu\in \mathcal{M}(X)$ with $\mu(U)>3/4$.
    Denote $\mathfrak{V}$ for the set of all $\mu\in \mathcal{M}(X)$ with $\mu(V_i)>1/3$ for $i=1,2$. 
    It is a straight forward consequence of the Portmanteau Theorem that 
    $\mathfrak{U}$ and $\mathfrak{V}$ are open (non-empty) subsets of $\myper(X)$. 
    Since $(\myper(X),T)$ is assumed to be transitive we find $k\in \mathbb{N}$ with 
    $T^k\mathfrak{U}\cap \mathfrak{V}\neq \emptyset$. 
    Thus, there exists $\mu\in \mathfrak{U}$ with $T^k\mu\in \mathfrak{V}$. 
    For $i=1,2$ we observe 
    $\mu(U)>3/4$ and $\mu((T^k)^{-1}(V_i))=T^k\mu(V_i)>1/3$.
    Thus, there exists $x\in U\cap (T^k)^{-1}(V_i)$
    and  
    $T^kx\in T^kU \cap V_i$ yields $T^kU \cap V_i\neq \emptyset$. 
    This shows $(X,T)$ to be weakly mixing. 
\end{proof}

\begin{remark}
    The proofs of \cite[Proposition~II.3]{furstenber1967disjointness} and \cite[Lemma~5]{banks1999topological} allow for a straight forward generalization to actions of Abelian groups.
    With the proofs presented above and in \cite{banks2005chaos} it follows that Proposition \ref{pro:weakMixingForMX} as well as the analog statement for $\hyper(X)$ also hold for actions of Abelian groups. 
    For non-Abelian groups the situation is significantly more complicated. See \cite[Page~169]{peleg1972weak} for a weakly mixing action $(X,G)$ of a non-Abelian group $G$ for which $(X^3,G)$ is not transitive. 
    
    An action $(X,G)$ is called \emph{weakly mixing of all orders} if $(X^n,G)$ is transitive for all $n\in \mathbb{N}$. 
    It is not difficult to adapt the proofs above and in \cite{bauer1975topological} to see that for any group $G$ an action $(X,G)$ is weakly mixing of all orders iff $(\myper(X),G)$ is weakly mixing of all orders, iff $(\hyper(X),G)$ is weakly mixing of all orders.
\end{remark}

\bibliographystyle{alpha}
\bibliography{ref}
\end{document}